\documentclass[twoside,a4paper,reqno,11pt]{amsart}

\usepackage[margin=1in]{geometry}
\usepackage{amsmath,amsthm,amsfonts,amssymb,latexsym, mathrsfs}

\usepackage{hyperref}

\usepackage{enumerate}
\usepackage{color}
\usepackage[dvipsnames]{xcolor}

\usepackage[shortlabels]{enumitem}
\usepackage{comment}

\begin{document}

\theoremstyle{plain}

\newtheorem{thm}{Theorem}[section]

\theoremstyle{plain}
\newtheorem{theoA}{Theorem}

\renewcommand{\thetheoA}{\Alph{theoA}}

\newtheorem{lem}[thm]{Lemma}
\newtheorem{Problem B}[thm]{Problem B}

\newtheorem{pro}[thm]{Proposition}
\newtheorem{conj}[thm]{Conjecture}
\newtheorem{cor}[thm]{Corollary}
\newtheorem{que}[thm]{Question}
\newtheorem{prob}[thm]{Problem}
\newtheorem{rem}[thm]{Remark}
\newtheorem{defi}[thm]{Definition}
\newtheorem{cond}[thm]{Condition}

\newtheorem*{thmA}{Theorem A}
\newtheorem*{thmB}{Theorem B}
\newtheorem*{corB}{Corollary B}
\newtheorem*{thmC}{Theorem C}
\newtheorem*{thmD}{Theorem D}
\newtheorem*{thmE}{Theorem E}
 
\newtheorem*{thmAcl}{Main Theorem$^{*}$}
\newtheorem*{thmBcl}{Theorem B$^{*}$}
\newcommand{\dd}{\mathrm{d}}

\newtheorem{thml}{Theorem}
\renewcommand*{\thethml}{\Alph{thml}}   
\newtheorem{conjl}[thml]{Conjecture}
\newtheorem{condl}[thml]{Condition}

\newcommand{\Maxn}{\operatorname{Max_{\textbf{N}}}}
\newcommand{\Syl}{\operatorname{Syl}}
\newcommand{\Lin}{\operatorname{Lin}}
\newcommand{\U}{\mathbf{U}}
\newcommand{\R}{\mathbf{R}}
\newcommand{\dl}{\operatorname{dl}}
\newcommand{\Con}{\operatorname{Con}}
\newcommand{\cl}{\operatorname{cl}}
\newcommand{\Stab}{\operatorname{Stab}}
\newcommand{\Aut}{\operatorname{Aut}}
\newcommand{\Ker}{\operatorname{Ker}}
\newcommand{\InnDiag}{\operatorname{InnDiag}}
\newcommand{\fl}{\operatorname{fl}}
\newcommand{\Irr}{\operatorname{Irr}}
\newcommand{\FF}{\mathbb{F}}
\newcommand{\EE}{\mathbb{E}}
\newcommand{\normal}{\trianglelefteq}
\newcommand{\sn}{\normal\normal}
\newcommand{\Bl}{\mathrm{Bl}}
\newcommand{\NN}{\mathbb{N}}
\newcommand{\N}{\mathbf{N}}
\newcommand{\bfC}{\mathbf{C}}
\newcommand{\bfO}{\mathbf{O}}
\newcommand{\bfF}{\mathbf{F}}
\def\GGG{{\mathcal G}}
\def\HHH{{\mathcal H}}
\def\HH{{\mathcal H}}
\def\irra#1#2{{\rm Irr}_{#1}(#2)}

\renewcommand{\labelenumi}{\upshape (\roman{enumi})}

\newcommand{\PSL}{\operatorname{PSL}}
\newcommand{\PSU}{\operatorname{PSU}}
\newcommand{\alt}{\operatorname{Alt}}
\newcommand{\miquelcomment}{\textcolor{blue}}

\providecommand{\V}{\mathrm{V}}
\providecommand{\E}{\mathrm{E}}
\providecommand{\ir}{\mathrm{Irm_{rv}}}
\providecommand{\Irrr}{\mathrm{Irm_{rv}}}
\providecommand{\re}{\mathrm{Re}}

\numberwithin{equation}{section}
\def\irrp#1{{\rm Irr}_{p'}(#1)}

\def\ibrrp#1{{\rm IBr}_{\Bbb R, p'}(#1)}
\def\C{{\mathbb C}}
\def\Q{{\mathbb Q}}
\def\irr#1{{\rm Irr}(#1)}
\def\irrp#1{{\rm Irr}_{p^\prime}(#1)}
\def\irrq#1{{\rm Irr}_{q^\prime}(#1)}
\def \c#1{{\cal #1}}
\def \aut#1{{\rm Aut}(#1)}
\def\cent#1#2{{\bf C}_{#1}(#2)}
\def\norm#1#2{{\bf N}_{#1}(#2)}
\def\zent#1{{\bf Z}(#1)}
\def\syl#1#2{{\rm Syl}_#1(#2)}
\def\normal{\triangleleft\,}
\def\oh#1#2{{\bf O}_{#1}(#2)}
\def\Oh#1#2{{\bf O}^{#1}(#2)}
\def\det#1{{\rm det}(#1)}
\def\gal#1{{\rm Gal}(#1)}
\def\ker#1{{\rm ker}(#1)}
\def\normalm#1#2{{\bf N}_{#1}(#2)}
\def\alt#1{{\rm Alt}(#1)}
\def\iitem#1{\goodbreak\par\noindent{\bf #1}}
   \def \mod#1{\, {\rm mod} \, #1 \, }
\def\sbs{\subseteq}

\def\gc{{\bf GC}}
\def\ngc{{non-{\bf GC}}}
\def\ngcs{{non-{\bf GC}$^*$}}
\newcommand{\notd}{{\!\not{|}}}

\newcommand{\Z}{\mathbf{Z}}
\newcommand{\Out}{{\mathrm {Out}}}
\newcommand{\Mult}{{\mathrm {Mult}}}
\newcommand{\Inn}{{\mathrm {Inn}}}
\newcommand{\IBR}{{\mathrm {IBr}}}
\newcommand{\IBRL}{{\mathrm {IBr}}_{\ell}}
\newcommand{\IBRP}{{\mathrm {IBr}}_{p}}
\newcommand{\cd}{\mathrm{cd}}
\newcommand{\ord}{{\mathrm {ord}}}
\def\id{\mathop{\mathrm{ id}}\nolimits}
\renewcommand{\Im}{{\mathrm {Im}}}
\newcommand{\Ind}{{\mathrm {Ind}}}
\newcommand{\diag}{{\mathrm {diag}}}
\newcommand{\soc}{{\mathrm {soc}}}
\newcommand{\End}{{\mathrm {End}}}
\newcommand{\sol}{{\mathrm {sol}}}
\newcommand{\Hom}{{\mathrm {Hom}}}
\newcommand{\Mor}{{\mathrm {Mor}}}
\newcommand{\Mat}{{\mathrm {Mat}}}
\def\rank{\mathop{\mathrm{ rank}}\nolimits}
\newcommand{\Tr}{{\mathrm {Tr}}}
\newcommand{\tr}{{\mathrm {tr}}}
\newcommand{\Gal}{{\rm Gal}}
\newcommand{\Spec}{{\mathrm {Spec}}}
\newcommand{\ad}{{\mathrm {ad}}}
\newcommand{\Sym}{{\mathrm {Sym}}}
\newcommand{\Char}{{\mathrm {Char}}}
\newcommand{\pr}{{\mathrm {pr}}}
\newcommand{\rad}{{\mathrm {rad}}}
\newcommand{\abel}{{\mathrm {abel}}}
\newcommand{\PGL}{{\mathrm {PGL}}}
\newcommand{\PCSp}{{\mathrm {PCSp}}}
\newcommand{\PGU}{{\mathrm {PGU}}}
\newcommand{\codim}{{\mathrm {codim}}}
\newcommand{\ind}{{\mathrm {ind}}}
\newcommand{\Res}{{\mathrm {Res}}}
\newcommand{\Lie}{{\mathrm {Lie}}}
\newcommand{\Ext}{{\mathrm {Ext}}}
\newcommand{\Alt}{{\mathrm {Alt}}}
\newcommand{\AAA}{{\sf A}}
\newcommand{\SSS}{{\sf S}}
\newcommand{\DDD}{{\sf D}}
\newcommand{\QQQ}{{\sf Q}}
\newcommand{\CCC}{{\sf C}}
\newcommand{\SL}{{\mathrm {SL}}}
\newcommand{\Sp}{{\mathrm {Sp}}}
\newcommand{\PSp}{{\mathrm {PSp}}}
\newcommand{\SU}{{\mathrm {SU}}}
\newcommand{\GL}{{\mathrm {GL}}}
\newcommand{\GU}{{\mathrm {GU}}}
\newcommand{\Spin}{{\mathrm {Spin}}}
\newcommand{\CC}{{\mathbb C}}
\newcommand{\CB}{{\mathbf C}}
\newcommand{\RR}{{\mathbb R}}
\newcommand{\QQ}{{\mathbb Q}}
\newcommand{\ZZ}{{\mathbb Z}}
\newcommand{\bfN}{{\mathbf N}}
\newcommand{\bfZ}{{\mathbf Z}}
\newcommand{\PP}{{\mathbb P}}
\newcommand{\cG}{{\mathcal G}}
\newcommand{\OO}{\mathcal O}
\newcommand{\cH}{{\mathcal H}}
\newcommand{\cQ}{{\mathcal Q}}
\newcommand{\GA}{{\mathfrak G}}
\newcommand{\cT}{{\mathcal T}}
\newcommand{\cL}{{\mathcal L}}
\newcommand{\IBr}{\mathrm{IBr}}
\newcommand{\cS}{{\mathcal S}}
\newcommand{\cR}{{\mathcal R}}
\newcommand{\GCD}{\GC^{*}}
\newcommand{\TCD}{\TC^{*}}
\newcommand{\FD}{F^{*}}
\newcommand{\GD}{G^{*}}
\newcommand{\HD}{H^{*}}
\newcommand{\GCF}{\GC^{F}}
\newcommand{\bl}{\mathrm{bl}}
\newcommand{\TCF}{\TC^{F}}
\newcommand{\PCF}{\PC^{F}}
\newcommand{\GCDF}{(\GC^{*})^{F^{*}}}
\newcommand{\RGTT}{R^{\GC}_{\TC}(\theta)}
\newcommand{\RGTA}{R^{\GC}_{\TC}(1)}
\newcommand{\Om}{\Omega}
\newcommand{\eps}{\epsilon}
\newcommand{\varep}{\varepsilon}
\newcommand{\al}{\alpha}
\newcommand{\chis}{\chi_{s}}
\newcommand{\sigmad}{\sigma^{*}}
\newcommand{\PA}{\boldsymbol{\alpha}}
\newcommand{\gam}{\gamma}
\newcommand{\lam}{\lambda}
\newcommand{\la}{\langle}
\newcommand{\genf}{F^*}
\newcommand{\ksigma}{k_{0,\sigma}}
\newcommand{\irrsigma}{\Irr_{0,\sigma}}
\newcommand{\irrtau}{\Irr_{0,\tau}}
\newcommand{\irrpsigma}{\Irr_{0,P,\sigma}}
\newcommand{\ra}{\rangle}
\newcommand{\hs}{\hat{s}}
\newcommand{\htt}{\hat{t}}
\newcommand{\tG}{\hat G}
\newcommand{\St}{\mathsf {St}}
\newcommand{\bfs}{\boldsymbol{s}}
\newcommand{\bfl}{\boldsymbol{\lambda}}
\newcommand{\tn}{\hspace{0.5mm}^{t}\hspace*{-0.2mm}}
\newcommand{\ta}{\hspace{0.5mm}^{2}\hspace*{-0.2mm}}
\newcommand{\tb}{\hspace{0.5mm}^{3}\hspace*{-0.2mm}}
\def\skipa{\vspace{-1.5mm} & \vspace{-1.5mm} & \vspace{-1.5mm}\\}
\newcommand{\tw}[1]{{}^#1\!}
\newcommand{\Irrg}[1]{\Irr_{p',\sigma}(#1)}
\renewcommand{\mod}{\bmod \,}

\newcommand{\bG}{\mathbf{G}}
\newcommand{\bbG}{\mathbb{G}}
\newcommand{\bg}{\mathbf}
\newcommand{\type}{\operatorname}
\newcommand{\wt}{\widetilde}
\newcommand{\sym}{\mathfrak{S}}

\marginparsep-0.5cm

\newcommand{\mandicomment}{\textcolor{teal}}

\subjclass[2020]{20C15, 20D20}
\keywords{Galois action, principal block, Sylow subgroup}

\title[Galois action and generation of Sylow $3$-subgroups]{Galois action on the principal block and generation of Sylow 3-subgroups}

\author[E. Ketchum]{Eden Ketchum}
\address[Ketchum, Schaeffer Fry]{Department of Mathematics, University of Denver, Denver, CO 80210,
USA}

\author[J. M. Mart\'inez]{J. Miquel Mart\'inez}
 \address[Mart\'inez, Rizo]{Departament de Matem\`atiques, Universitat de Val\`encia, 46100 Burjassot,
 Val\`encia, Spain}

\author[N. Rizo]{Noelia Rizo}

\author[A. A Schaeffer Fry]{A. A. Schaeffer Fry}
\email[Ketchum]{eden.ketchum@du.edu}
\email[Mart\'inez]{josep.m.martinez@uv.es}
\email[Rizo]{noelia.rizo@uv.es}
\email[Schaeffer Fry]{mandi.schaefferfry@du.edu}

\thanks{The first and fourth-named authors acknowledge support of a CAREER grant from the U.S. National Science Foundation, Award No. DMS-2439897; the second and third-named authors acknowledge support from grant
PID2022-137612NB-I00 funded by MCIN/AEI/ 10.13039/501100011033 and ERDF “A way of making Europe” and grant CIDEIG/2022/29 funded by Generalitat Valenciana. Part of this work was done while the second and third-named authors visited the University of Denver. They wish to thank the Department of Mathematics for their kind hospitality.}

\begin{abstract}

 In this paper, we prove one direction of a conjecture of Navarro--Rizo--Schaeffer Fry--Vallejo positing an algorithm to determine from the character table whether a finite group has $2$-generated Sylow $3$-subgroups. This gives further evidence of the blockwise version of the Galois--McKay conjecture (also known as the Alperin--McKay--Navarro conjecture). A key step involves proving the Isaacs--Navarro Galois conjecture for principal blocks for finite groups with a certain structure.
\end{abstract}
\maketitle

\section{Introduction}

In Problem 12 of his famous list of problems from 1963, Brauer asked what information could be gleaned about a Sylow $p$-subgroup $P$ of a finite group $G$ from its character table. In particular, the question of whether the number of generators of $P$ could be obtained in this way has been studied in recent years, see e.g. \cite{Riz-Sch-Val20, Nav-Riz-Sch-Val20, Mor-Sam23, Va23}. Many questions in this realm seem linked to the action of the Galois automorphism $\sigma\in\Gal(\mathbb{Q}^{ab}/\mathbb{Q})$ that fixes $p'$-roots of unity and sends $p$-power roots of unity to their $1+p$ power. For any cyclotomic extension $\mathbb{Q}_n:=\mathbb{Q}(e^{2\pi i/n})$ of $\mathbb{Q}$, we keep the notation $\sigma$ for the restriction of this automorphism to $\mathbb{Q}_n$.
Here, we contribute to this line of problems by proving the following direction of the main conjecture from \cite{Nav-Riz-Sch-Val20}.
\begin{theoA}\label{thm:A}
    Let $G$ be a finite group and $P\in\Syl_3(G)$. Then $|P:\Phi(P)|=9$ if the principal $3$-block $B_0(G)$ contains exactly $6$ or $9$ $\sigma$-invariant characters of degree coprime to $3$.
\end{theoA}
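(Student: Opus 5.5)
The plan is to reduce Theorem~\ref{thm:A} to a purely local statement about $\N_G(P)$ and then to prove the required global--local equality for principal blocks. Write $k_\sigma(B)$ for the number of $\sigma$-invariant characters of $3'$-degree in a block $B$. Two facts drive the argument.

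\emph{The local count.} A linear character $\lambda$ of $P$ is $\sigma$-invariant exactly when $\lambda^{1+3}=\lambda$, that is, when $\lambda^3=1$, which holds exactly when $\Phi(P)\subseteq\ker\lambda$. Combining this with Clifford theory over $P$ in $\N_G(P)$ shows that $k_\sigma(B_0(\N_G(P)))$ equals the number of $3'$-degree irreducible characters of
$$\N_G(P)/\Phi(P)\bfO_{3'}(\N_G(P))\cong (P/\Phi(P))\rtimes H,$$
where $H$ is a $3'$-group acting faithfully on the elementary abelian group $P/\Phi(P)$. I will first check, by an elementary orbit count (the counting behind the main result of \cite{Nav-Riz-Sch-Val20}), that this number is $6$ or $9$ only when $|P:\Phi(P)|=9$.
\begin{itemize}
\item If $P$ is cyclic, then $|H|\le 2$ and the count is at most $3$.
\item If $|P:\Phi(P)|=3^r$ with $r\ge 3$, then $|H|$ and the orbit structure of $H$ on $\Irr(P/\Phi(P))$ are constrained enough to rule out the values $6$ and $9$. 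The count there is $\sum_{\lambda}|\Irr(H_\lambda)|$, taken over $H$-orbit representatives $\lambda$, and I will analyse it via the linear-group bounds of Pálfy--Wolf type.
\end{itemize}

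\emph{The global--local equality.} It then suffices to show $k_\sigma(B_0(G))=k_\sigma(B_0(\N_G(P)))$ under the hypothesis $k_\sigma(B_0(G))\in\{6,9\}$. This is the Isaacs--Navarro Galois conjecture for principal blocks, restricted to the groups that actually occur. I will take $G$ to be a minimal counterexample and argue as follows.
\begin{enumerate}
\item Since $B_0(G)=B_0(G/\bfO_{3'}(G))$ and the local side also passes to the quotient, we may assume $\bfO_{3'}(G)=1$.
\item Let $N$ be a minimal normal subgroup of $G$.
\item Use Clifford theory and the fact that $\sigma$ fixes $3'$-roots of unity to bound $k_\sigma(B_0(G))$ from below by $k_\sigma(B_0(G/N))$ plus the contributions of $\sigma$-invariant $G$-orbits in $\Irr_{3'}(B_0(N))$.
\item If $N$ is a $3$-group, the hypothesis $k_\sigma\in\{6,9\}$ together with induction on $G/N$ and the local analysis above forces $G/N$, and hence $G$, to have rank-$2$ Sylow $3$-subgroups; here I use $\Phi(P/N)\supseteq\Phi(P)N/N$ and a separate check of $N\subseteq\Phi(P)$ versus $N\not\subseteq\Phi(P)$.
\item If $N=S^t$ with $S$ nonabelian simple and $3\mid |S|$, the smallness of the count forces $t\le 2$, and in fact $t=1$ unless the contributions are very restricted. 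We thereby reach an almost simple situation $S\normal G\le\Aut(S)$, possibly extended by a small $3$-part.
\end{enumerate}

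For the almost simple case I will invoke the known $\sigma$-equivariant McKay bijections for principal blocks: the inductive Galois--McKay conditions verified for $p=3$, and the Navarro--Späth--Vallejo reduction, which allows the bijection to be lifted through $G/S$. This yields the equality of $\sigma$-invariant counts for $G$ and $\N_G(P)$, so the local count finishes the proof.

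The main obstacle is this last step. Reducing to almost simple groups requires an $\Aut(S)_P$-equivariant, $\sigma$-compatible bijection that respects principal blocks and behaves well with respect to extensions. For groups of Lie type in characteristic $\ne 3$ this needs care with Harish-Chandra and $d$-Harish-Chandra series, and with field automorphisms. I would first try to avoid the full inductive condition. The idea is to use the hypothesis $k_\sigma\in\{6,9\}$ directly: it forces $S$ to have $3$-rank at most $2$, which leaves a short, explicit list of simple groups ($\PSL_2(q)$, $\PSL_3(q)$, $\PSU_3(q)$, some exceptional and sporadic groups). For these, the principal-block characters and the $\sigma$-action can be read off from generic character tables.
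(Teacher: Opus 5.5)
Your local computation is right: $k_\sigma(B_0(\N_G(P)))$ is the number of classes of $\N_G(P)/\Phi(P)\bfO_{3'}(\N_G(P))$. This is the paper's Step~1, which settles it by the classification of groups with few conjugacy classes. Note that P\'alfy--Wolf bounds are for solvable linear groups, and a $3'$-group need not be solvable. The proposal does, however, have two genuine gaps.

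\textbf{The global--local equality is not available.} The equality $k_\sigma(B_0(G))=k_\sigma(B_0(\N_G(P)))$ is the Alperin--McKay--Navarro conjecture (part of \cite[Conj.~D]{Isa-Nav02}) for principal blocks. It is open and has not been reduced to simple groups. The inductive Galois--McKay condition and the Navarro--Sp\"ath--Vallejo reduction concern the block-free count $|\Irr_{3',\sigma}(G)|=|\Irr_{3',\sigma}(\N_G(P))|$. They say nothing about which characters lie in $B_0(G)$. Your minimal-counterexample scheme does not produce the equality either, because the hypothesis $k_\sigma\in\{6,9\}$ does not pass to $G/N$. Concretely, in your step (4), let $N$ be an elementary abelian $3$-group with $k_\sigma(B_0(G/N))=3$, so $G/N$ has cyclic Sylow $3$-subgroups by \cite{Riz-Sch-Val20}. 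Then induction gives nothing, and nothing you wrote explains why $P$, an extension of a possibly large $N$ by a cyclic group, is $2$-generated.

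This is exactly where the paper needs its new Theorem~\ref{thm:B}: a genuine global--local equality when $N$ is an abelian $3$-group and $G/N$ has cyclic Sylow subgroups. Its proof uses Clifford theory over $P\times\langle\sigma\rangle$-invariant $\theta\in\Irr(N)$, Gallagher's theorem with \cite[Lem.~2.4]{Riz18}, Murai's domination result (Lemma~\ref{lem:corollari2.5murai}), and \cite[Thm.~3.4]{Nav04} for cyclic defect. After that, $\N_G(P)$ has a normal Sylow subgroup and Step~1 applies. The other values of $k_\sigma(B_0(G/N))$ are handled by Lemma~\ref{lem:kernel of sigmas} and Proposition~\ref{pro:noelia relative sigma}. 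The latter says that each $P\times\langle\sigma\rangle$-invariant $\theta$ with a $\sigma$-invariant extension in $B_0(PN)$ contributes a positive multiple of $3$. Your step (3) needs exactly this divisibility for its lower bounds to bite, and never states it.

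\textbf{The claim $t\le 2$ in step (5) is false.} Take $S=\PSL_2(7)$ and $G=S\wr C_3$.
\begin{itemize}
\item $\Irr(B_0(S))=\{1,\chi_7,\chi_8\}$, and these characters are rational.
\item $B_0(G)$ is the only block covering $B_0(N)$.
\item Characters over non-invariant $\theta\in\Irr(N)$ have degree divisible by $3$.
\item The $3'$-degree characters of $B_0(G)$ are the nine extensions of $\theta\times\theta\times\theta$ with $\theta\in\Irr(B_0(S))$. Each is a rational tensor-induced character times a character of $C_3$, so all nine are $\sigma$-invariant.
\end{itemize}
Hence $k_\sigma(B_0(G))=9$ with $t=3$, and $S\wr C_{3^k}$ likewise gives $k_\sigma=9$ with $t=3^k$.

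What counting actually bounds is the number $s$ of $P$-orbits on the simple factors: Steps 7--8 give $s=1$. You are then left with $PN$ transitive on $S_1,\dots,S_t$ and $G/N$ with cyclic Sylow $3$-subgroups, which is not almost simple when $t>1$. The paper handles this case with:
\begin{itemize}
\item Brauer's structure theorem (Theorem~\ref{Bra76});
\item Lemma~\ref{lem:carolina} together with Theorem~\ref{thm:simple group conditions}(iii);
\item Theorem~\ref{thm:3.4 of NRSV} together with \cite{eden};
\item when $S$ has cyclic Sylow $3$-subgroups, a direct check that $P=\langle y,z_1\rangle$.
\end{itemize}
Your proposal has no counterpart to any of this.

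Finally, in the genuinely almost simple case, your fallback of working through a list of simple groups amounts to redoing \cite{eden}. There, ``$k_\sigma\in\{6,9\}$ forces $3$-rank at most $2$'' is the content to be proved, not an input.
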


As noted in \cite{Nav-Riz-Sch-Val20}, Theorem \ref{thm:A} (and its converse) would follow from the Alperin--McKay--Navarro conjecture \cite[Conj.~B]{Nav04}, or even the more restrictive version  \cite[Conj.~D]{Isa-Nav02}. Hence, this result gives further evidence for these elusive conjectures. We remark that although the block-free version \cite[Conj.~C]{Isa-Nav02} was recently established in \cite{RSF25}, the blockwise version (even for principal blocks) is much further from completion and has not yet been reduced to a problem on simple groups. For this reason, finding additional evidence of this blockwise version remains pertinent.  

Theorem \ref{thm:A} can be thought of simultaneously as an extension of the main result of \cite{Nav-Riz-Sch-Val20}, which addressed the analogous question for $p=2$, and of the main result of \cite{Gia-Riz-Sch-Val24}, which showed that $[P:P']=9$ if and only if $B_0(G)$ contains exactly 6 or 9 irreducible characters of degree coprime to $3$.

Our proof of Theorem \ref{thm:A} uses the recent work of Ketchum \cite{eden}, which shows that the statement of Theorem \ref{thm:A} (and its converse) hold for almost simple groups. It also uses a reduction theorem and additional results on almost simple groups analogous to those in \cite{Gia-Riz-Sch-Val24}. Moreover, the reduction relies on beautiful work of R. Brauer \cite{Bra76} on the structure of groups having cyclic Sylow $p$-subgroups.

As mentioned above, \cite[Conj.~C]{Isa-Nav02} has been completed in \cite{RSF25}. Our next main theorem considers the principal block version, which is part of \cite[Conj.~D]{Isa-Nav02}, and is a key step towards the proof of Theorem \ref{thm:A}. We believe it may be of independent interest. Let $\mathcal{H}\leq \Gal(\mathbb{Q}^{ab}/\mathbb{Q})$ be the Galois group considered in \cite{Nav04}.

\begin{theoA}\label{thm:B}
Let $G$ be a finite group of order divisible by $p$, let $N\normal G$ be an abelian $p$-group and assume $G/N$ has cyclic Sylow $p$-subgroups and let $\tau\in\mathcal{H}$ be a $p$-power order element. Then $\tau$ fixes the same number of height-zero characters in $\Irr(B_0(G))$ as in $\Irr(B_0(\norm G P))$, where $P\in{\rm Syl}_p(G)$.
\end{theoA}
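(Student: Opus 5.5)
We reduce modulo $\bfO_{p'}(G)$ and exploit the structure forced by the hypotheses. Recall that $\Irr(B_0(G))=\Irr(B_0(G/\bfO_{p'}(G)))$ via inflation, and the same holds for $\norm GP$ with $\norm GP\bfO_{p'}(G)/\bfO_{p'}(G)=\norm{\bar G}{\bar P}$. The action of $\tau$ commutes with inflation, and heights are preserved. We may therefore assume $\bfO_{p'}(G)=1$. Then $N\le \oh pG$. Since $G/N$ has cyclic Sylow $p$-subgroups, the structure of $G/\oh pG$ is controlled by groups with cyclic Sylow subgroups. This is where we use Brauer's work \cite{Bra76}.

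The plan is to work over the abelian normal $p$-subgroup $N$. Every $\chi\in\Irr(B_0(G))$ lies over some $\theta\in\Irr(N)$, and $\norm GP$ contains $P\ge N$. We compare the $G$-orbits on $\Irr(N)$ with the $\norm GP$-orbits via a Frattini-type argument: for $\theta\in \Irr(N)$ with stabilizer $G_\theta$, we have $G_\theta\ge N$, and a Sylow subgroup of $G_\theta$ can be taken inside $P$. For height zero characters, $P$ must stabilize some $G$-conjugate of $\theta$. Hence the height-zero characters of $B_0(G)$ over $\theta$ correspond, by the Clifford correspondence and Dade/Fong--Reynolds for principal blocks (using $\bfO_{p'}(G)=1$ and the fact that $N$ is abelian, so characters of $N$ extend to their inertia groups in $P$-stable situations), to height-zero characters in $B_0(G_\theta)$ over $\theta$. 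The same holds for $\norm GP$ with stabilizer $\norm{G_\theta}{P}$. The issue of $\tau$-equivariance arises because $\tau$ permutes the $\theta$'s by $\theta\mapsto\theta^\tau$. Since $\tau$ has $p$-power order and $N$ is a $p$-group, orbits of $\langle\tau\rangle$ on $G$-orbits of $\Irr(N)$ must be counted. We count $\tau$-fixed characters via orbits: $\chi^\tau=\chi$ forces $\theta^\tau$ to be $G$-conjugate to $\theta$. We then pass to the semidirect product $G_\theta$ with the relevant twisted Galois action, fixing $\theta$ up to conjugation.

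This reduces the problem to $\theta$ $G$-invariant (after replacing $G$ by $G_\theta$). We then consider $G/\ker\theta$, in which $N/\ker\theta$ is cyclic and central. There we need to count $\tau$-invariant height-zero characters in $B_0(G\mid\theta)$ versus $B_0(\norm GP\mid\theta)$. Here $G/N$ has cyclic Sylow $p$-subgroup $P/N$. For blocks with cyclic defect groups modulo a central subgroup, Dade's theory, together with Brauer's description of the principal block, gives explicit character values, including the Galois action on exceptional characters. Concretely, the height-zero characters of $B_0(G/N)$ and of $B_0(\norm{G}{P}/N)$ are in a bijection (Dade's) that is known to be $\mathcal H$-equivariant. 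This is the Isaacs--Navarro conjecture for cyclic defect, established by Navarro in \cite{Nav04}. We lift this bijection over the central character $\theta$ using character triple isomorphisms, since $\theta$ is linear and $G$-invariant.

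The main obstacle is the last step. The bijection over a nontrivial $\theta$ must be $\tau$-equivariant, and the character triples $(G,N,\theta)$ and $(\norm GP,N,\theta)$ need to be compatible with Galois action. This should be a $\mathcal H$-equivariant (centrally isomorphic) character triple statement. My first attempt would exploit the fact that $P/N$ is cyclic, so $\theta$ extends to $P$, and hence to $G$ by Gallagher-type arguments, since the Sylow obstruction vanishes. I would then choose an extension $\hat\theta$ whose values lie in $\mathbb Q(\theta)$ and whose $\tau$-orbit is controlled, for example the canonical extension of $p$-power order determinant. Gallagher's theorem then yields $\chi=\hat\theta\beta$ with $\beta\in\Irr(G/N)$, and this choice is made compatibly on $\norm GP$, restricting appropriately. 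Finally, since $\tau$ has $p$-power order, fixed-point counts can be matched via the equivariance of the combined bijection.
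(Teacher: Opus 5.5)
Your outline has the right overall shape, and it matches the paper's: reduce to a $P$-invariant $\theta\in\Irr(N)$, pass to $G_\theta$ by the Clifford correspondence, write the characters over $\theta$ as $\hat\theta\beta$ via Gallagher, and apply Navarro's theorem \cite[Thm.~3.4]{Nav04} for cyclic defect. However, the block bookkeeping in the Gallagher step is wrong. The characters $\beta\in\Irr(G_\theta/N)$ with $\hat\theta\beta\in\Irr(B_0(G_\theta))$ are in general \emph{not} the characters of $B_0(G_\theta/N)$. For example, take $p=3$, $G=\mathsf{S}_3$, $N=\mathsf{A}_3$, $\theta=1_N$ and $\hat\theta=1_G$. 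The sign character lies in $B_0(G)$, which is the only $3$-block of $G$, but not in $B_0(G/N)=\{1_{G/N}\}$. What is true (\cite[Lem.~2.4]{Riz18}) is that the relevant $\beta$ of $p'$-degree are those of a \emph{set} $\mathcal{B}$ of blocks of maximal defect of $G_\theta/N$. The indispensable extra input is that the local side yields exactly the Brauer correspondents in $\norm{G_\theta}P/N$ of the blocks in $\mathcal{B}$. This is the Murai-type statement of Lemma~\ref{lem:corollari2.5murai}, which your proposal never addresses. Only then can \cite[Thm.~3.4]{Nav04} be applied block by block; every block in $\mathcal{B}$ has cyclic defect groups. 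Lifting a bijection between $B_0(G_\theta/N)$ and $B_0(\norm{G_\theta}P/N)$ alone misses every character coming from a nonprincipal block of $G_\theta/N$, so it does not give the equality of counts.

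Your treatment of $\tau$ also has a hole. The twisted Galois action is unnecessary. Since $p\nmid\chi(1)$, the constituents of $\chi_N$ form a $G$-orbit of $p'$-length, so the $p$-group $\langle\tau\rangle$ fixes one of them and hence all of them. Combined with \cite[Lem.~9.3]{Nav18}, you may take $\theta$ to be $P\times\langle\tau\rangle$-invariant (Lemma~\ref{lema9.3withtau}). But you then need a $\tau$-invariant extension of $\theta$ lying in $B_0(G_\theta)$. An extension of $p$-power order is not unique, since it can be twisted by linear characters of $p$-power order of $G_\theta/N$. Nor is it $\tau$-stable in general. If $G=P$ is cyclic of order $p^2$, $|N|=p$, $\tau=\sigma$ and $\theta$ is faithful, then $\theta$ is $\sigma$-invariant but no extension of $\theta$ to $P$ is. When $\hat\theta^\tau=\hat\theta\nu$ with $1\neq\nu\in\Irr(G_\theta/N)$, $\tau$ acts on the Gallagher parameters by $\beta\mapsto\nu\beta^\tau$. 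Navarro's theorem says nothing about this twisted action, and ``matching fixed-point counts via equivariance'' does not cover it. The paper closes this gap by proving three things are equivalent: $\irrtau(B_0(G)|\theta)\neq\emptyset$, $\theta$ has a $\tau$-invariant extension to $P$, and $\irrtau(B_0(\norm GP)|\theta)\neq\emptyset$. In that case Lemma~\ref{lem:linear sigma-invariant} yields a $\tau$-invariant extension $\tilde\theta\in\Irr(B_0(G_\theta))$, and its restriction to $\norm{G_\theta}P$ is used on the local side. The reduction to $\bfO_{p'}(G)=1$ and the appeal to Brauer's structure theorem play no role in this argument.
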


In particular, \cite[Conj.~D]{Isa-Nav02} holds for principal blocks of groups satisfying the hypotheses of Theorem \ref{thm:B}. In fact, as explained in Remark \ref{rem:thm B}, the same result holds by assuming only that every block of maximal defect of $G/N$ satisfies \cite[Conj.~D]{Isa-Nav02}, or \cite[Conj.~B]{Nav04}. (See also \cite[Prop.~5.4]{Mar-Mar-Sch-Val24} for another related, but restrictive, case.)

The paper is structured as follows: Section \ref{sec:aux} contains preliminary lemmas and results, as well as a Galois version of \cite[Thm.~B]{Gia-Riz-Sch-Val24}. Theorem \ref{thm:B} is proved in Section \ref{sec:thm B}. Theorem \ref{thm:A} is reduced to simple groups in Section \ref{sec:reduction} and finally completed in Section \ref{sec:simples}, where we also provide some evidence for the version of Theorem \ref{thm:A} for arbitrary blocks.

\section{Auxiliary results}\label{sec:aux}

\subsection{Galois action and principal blocks}

We begin by collecting some basic facts about principal blocks. Throughout, given a prime $p$, $B_0(G)$ denotes the principal $p$-block of the finite group $G$. We denote by $\Irr(B_0(G))$ the set of irreducible characters in $B_0(G)$ and by $\Irr_0(B_0(G))$ the subset of those with height zero (that is, $p'$-degree). The  set of $\sigma$-invariant characters in $\Irr_0(B_0(G))$ will be denoted $\irrsigma(B_0(G))$, and we set $\ksigma(B_0(G)):=|\irrsigma(B_0(G))|$.

\begin{lem}\label{lem:princblockabove} Let $G$ be a finite group, let $N\normal G$ and let $p$ be a prime. 
 \begin{enumerate}
 \item $\irr {B_0(G/N)}\sbs \irr{B_0(G)}$, and if $N$ has order not divisible by $p$ then $\irr {B_0(G/N)}= \irr{B_0(G)}$.
 \item For any $\theta\in\irr{B_0(N)}$, there is some $\chi\in\irr{B_0(G})$ lying over $\theta$.
  \item For any  $\chi\in\irr{B_0(G)}$, every constituent of $\chi_N$ lies in $B_0(N)$.
  \item If $G/N$ is a $p$-group, then $B_0(G)$ is the unique block covering $B_0(N)$.
  \item Assume $G=H_1\times\dots\times H_t$. Then 
$$\Irr(B_0(G))=\{\theta_1\times\dots\times\theta_t\mid\theta_i\in\Irr(B_0(H_i))\}.$$
 \end{enumerate}
 \end{lem}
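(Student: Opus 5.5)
These are standard facts from block theory, and I plan to deduce each item from well-known results (Navarro's book \emph{Characters and Blocks of Finite Groups}), using the characterization of the principal block via central characters: $\chi\in\Irr(B_0(G))$ if and only if $\omega_\chi(\hat K)^* = |K|^*$ for every class sum $\hat K$, where $^*$ denotes reduction modulo a maximal ideal containing $p$.

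For (i), let $\chi\in\Irr(G/N)$ be inflated to $G$. Each class sum of $G/N$ is the image of a sum of class sums of $G$, so the criterion carries over; more cleanly, the block of $G$ containing the inflation of $B_0(G/N)$ contains the inflation of $1_{G/N}$, which is $1_G$. Hence $\Irr(B_0(G/N))\subseteq\Irr(B_0(G))$. When $p\nmid|N|$, $N$ lies in $\bfO_{p'}(G)$, which is contained in the kernel of every character of $B_0(G)$. This is a theorem: the characters of $B_0(G)$ are those with $\bfO_{p'}(G)$ in the kernel whose block is principal in $G/\bfO_{p'}(G)$. Together these give equality.

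For (ii) and (iii), I will use covering of blocks. The block $B_0(G)$ covers $B_0(N)$, since $1_G$ lies over $1_N$. By the basic covering theory (Navarro, Thm.~9.2 and its corollaries), every block of $N$ covered by $B_0(G)$ is $G$-conjugate to $B_0(N)$, hence equal to $B_0(N)$ because $B_0(N)$ is $G$-invariant. This gives (iii). For (ii), apply Thm.~9.4: for $\theta\in\Irr(B_0(N))$, some constituent of $\theta^G$ lies in a block covering $B_0(N)$, and we need that block to be principal. I expect this to be the only delicate point. I will handle it using Navarro, Thm.~9.4 together with the fact that $\theta^G$ has constituents in every block covering the block of $\theta$. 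Alternatively, take $\chi$ under $(1_N)^G$ in $B_0(G)$ and use that all characters of $B_0(N)$ appear in restrictions of characters of $B_0(G)$, via Thm.~9.4 with $b=B_0(N)$ and $B=B_0(G)$.

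For (iv), apply Corollary 9.6 of Navarro: if $G/N$ is a $p$-group, a block of $N$ is covered by a unique block of $G$. For (v), use induction on $t$ and the central-character criterion. Since $\hat K$ for $G$ is a product of class sums of the factors, $\omega_{\theta_1\times\cdots\times\theta_t}$ is the product of the $\omega_{\theta_i}$. This product agrees modulo $p$ with the trivial central character exactly when each factor does, which can be checked by taking classes that are trivial in all but one coordinate. This gives the stated description.
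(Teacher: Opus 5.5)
Your proposal is correct. For (i)--(iv) it is the paper's proof: the paper simply cites \cite{Nav98}, using the discussion before Thm.~7.6 and Thm.~9.9(c) for (i), Thm.~9.4 for (ii), Thm.~9.2 with Cor.~9.3 for (iii), and Cor.~9.6 for (iv). These are the same results you invoke.

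The step you flag as delicate in (ii) is not delicate. \cite[Thm.~9.4]{Nav98} says precisely that if $B$ covers $b$, then every $\theta\in\Irr(b)$ lies under some $\chi\in\Irr(B)$. Since $B_0(G)$ covers $B_0(N)$ (because $1_G$ lies over $1_N$), (ii) is immediate.

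In (i), your first justification is stated backwards: modulo $p$, a class sum of $G/N$ need not be the image of a combination of class sums of $G$. What you actually need is that $\hat K$ maps to $\frac{|K|}{|\overline{K}|}\hat{\overline{K}}$. Then $\omega_\chi(\hat K)=\frac{|K|}{|\overline{K}|}\,\omega_{\bar\chi}(\hat{\overline{K}})$ whenever $\chi$ is inflated from $\bar\chi$, and the criterion transfers. Your ``more cleanly'' version is exactly the domination fact the paper cites, so nothing breaks.

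The only genuine difference is (v). There the paper quotes \cite[Lem.~2.6(b)]{Nav-Tie16}, while you verify the statement directly with central characters. Your argument is valid and does not even need induction on $t$. The classes of $H_1\times\dots\times H_t$ are the products $K_1\times\dots\times K_t$, with $\omega_{\theta_1\times\dots\times\theta_t}(\hat K)=\prod_i\omega_{\theta_i}(\hat K_i)$ and $|K|=\prod_i|K_i|$. One direction is therefore immediate, and the other follows by taking classes that are trivial in every coordinate but one. Your version makes (v) self-contained at the cost of a few lines; the citation is shorter.
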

 \begin{proof}
The first part of (i) follows from the discussion before \cite[Thm.~7.6]{Nav98}, and the second part is \cite[Thm.~9.9(c)]{Nav98}. Part (ii) is \cite[Thm.~9.4]{Nav98},  part (iii) is \cite[Thm.~9.2, Cor.~9.3]{Nav98} and part (iv) is \cite[Cor.~9.6]{Nav98}. Part (v) is \cite[Lem.~2.6(b)]{Nav-Tie16}.
 \end{proof}

 \begin{lem}\label{lem:carolina}
     Let $G$ be a finite group and let $N=S_1\times\cdots\times S_t$ be a minimal normal subgroup of $G$, where the $S_i$ are transitively permuted by $G$. Suppose that $G/N=\langle xN\rangle$ is cyclic, $\cent G N=1$ and $\norm G {S_i}=N$. Then $G/N\lesssim  \Out(N)=\Out(S_1)\wr {\sf S}_t$. In particular, after a suitable reordering of the $S_i$'s, $xN$ acts on $N$ as $(\alpha_1,\dots,\alpha_t)\tau\in \Out(S_1)\wr \mathsf{S}_t$ where $\tau=(1...t)$, and we have
     \begin{enumerate}
         \item $\alpha_1\cdots\alpha_t=1$;
         \item if $\eta\in\Irr(S_1)$, then the character
$\theta=\eta\times\eta^{\alpha_1}\times\eta^{\alpha_1\alpha_{2}}\times\cdots\times\eta^{\alpha_1\cdots\alpha_{t}}$
         is $G$-invariant.
         
     \end{enumerate}

 \end{lem}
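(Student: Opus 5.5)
The plan is to use that $\cent G N=1$ makes $G$ embed in $\Aut(N)$, and then to get the cyclic-shift form from the transitive action on the factors. Since $N$ is minimal normal and nonabelian (its centralizer is trivial, so $\zent N=1$ and the $S_i$ are nonabelian simple), we have $\Aut(N)=\Aut(S_1)\wr\mathsf S_t$ and $\Out(N)=\Out(S_1)\wr\mathsf S_t$. Conjugation gives an injection $G\hookrightarrow\Aut(N)$ carrying $N$ onto $\Inn(N)$, hence an injection $G/N\hookrightarrow\Out(N)$. This is the claim $G/N\lesssim\Out(N)$.

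Next I would pin down the permutation part. Write the image of $x$ in $\Aut(N)$ as $(a_1,\dots,a_t)\pi$ with $a_i\in\Aut(S_1)$ (after identifying each $S_i$ with $S_1$) and $\pi\in\mathsf S_t$. Since $G=\langle x\rangle N$ and $N$ fixes each $S_i$, transitivity of $G$ on $\{S_i\}$ forces $\langle\pi\rangle$ to be transitive, so $\pi$ is a $t$-cycle. After reordering we get $\pi=\tau=(1\dots t)$. Passing to outer automorphisms gives $xN\mapsto(\alpha_1,\dots,\alpha_t)\tau$ with $\alpha_i$ the class of $a_i$.

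For (i), compute $x^t$. Its image is $(\beta_1,\dots,\beta_t)$ with trivial permutation part, and $\beta_1$ is, up to the ordering convention for composition, the product $\alpha_1\alpha_2\cdots\alpha_t$. Since $x^t$ normalizes $S_1$ and $\norm G{S_1}=N$, we have $x^t\in N$, so $x^t$ acts by inner automorphisms and $\beta_1=1$ in $\Out(S_1)$. The main care needed here is to choose the identification of the $S_i$ with $S_1$ (via $S_{i+1}=S_1^{x^i}$, say) so that the ordering convention in the product matches the one in the statement. This bookkeeping is the only real obstacle. In fact this choice lets one take $a_1=\dots=a_{t-1}=1$ in $\Aut$, so that only $\alpha_t$ is nontrivial and $\alpha_t=1$ by the argument above; the general statement follows by the same computation.

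For (ii), I would check that $\theta$ is $x$-invariant; $N$-invariance is automatic. Here $\theta^x$ is obtained by permuting the tensor factors cyclically and twisting each by the corresponding $\alpha_i$. Characters of $S_1$ depend only on the outer class, so the action through $\Out$ is well defined. The $(i+1)$-st factor of $\theta^x$ becomes $\eta^{\alpha_1\cdots\alpha_i}$, matching the $(i+1)$-st factor of $\theta$. The wrap-around factor becomes $\eta^{\alpha_1\cdots\alpha_t}=\eta$ by (i), which matches the first factor of $\theta$. Hence $\theta^x=\theta$, and $\theta$ is $G$-invariant.
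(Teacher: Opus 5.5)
Your proposal is correct and follows essentially the same route as the paper. You embed $G/N$ into $\Out(N)=\Out(S_1)\wr\mathsf{S}_t$ and deduce (i) from $\bigl((\alpha_1,\dots,\alpha_t)\tau\bigr)^t=1$; the paper phrases this as $o(xN)=|G:N|=t$ and you as $x^t\in\norm G{S_1}=N$, which is the same fact. You then obtain (ii) by checking $\theta^x=\theta$ factor by factor, using (i) for the wrap-around factor.

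Two points in your write-up clarify things the paper leaves implicit. First, you read the final listed factor $\eta^{\alpha_1\cdots\alpha_t}$ as the wrap-around term, equal to $\eta$ by (i). Second, you note that the ordering convention in the product must match the one used in the action on $\theta$.
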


 \begin{proof}
     Since $G$ acts transitively on the components of $N$, $G/N\leq\Out(N)\cong\Out(S_1)\wr\mathsf{S}_t$.
  Thus, $xN$ seen as an element of ${\rm Out}(N)$ acts on characters of $N$ like some $(\alpha_1, \alpha_2, \ldots, \alpha_r)\tau$ with $\alpha_i \in {\rm Out}(S)$ and $\tau \in {\sf S}_t$ of order $t$ and after a suitable reordering of the elements we may assume $\tau=(1...t)$. 
  Moreover, since $\norm G {S_1}=N$, then $o(xN)=|G:N|=t$ and we have that 
  \[1=((\alpha_1,\dots,\alpha_t)\tau)^t=(\prod_{j=0}^{t-1}\alpha_{\tau^j(1)},\dots,\prod_{j=0}^{t-1}\alpha_{\tau^j(t)})\tau^t.\] 
  Therefore, $$\prod_{j=0}^{t-1}\alpha_{\tau^j(1)}=\alpha_1\cdots\alpha_t=1,$$ proving part (i).
  Now, if $\eta\in\irr S$, then let $\theta=\eta\times\eta^{\alpha_1}\times\eta^{\alpha_1\alpha_{2}}\times\cdots\times\eta^{\alpha_1\cdots\alpha_{t}}$ and notice that $\theta^x=\theta$ by part (ii). 
  Since $G/N=\langle xN\rangle$, $\theta$ is $G$-invariant.
 \end{proof}

 \begin{thm}[Alperin--Dade]\label{thm:Alperin-Dade}
 Let $N\normal G$ and $P\in\Syl_p(G)$. If $p$ does not divide $|G:N|$ and $N\cent G P=G$, then restriction defines a bijection
 $$\irrsigma(B_0(G))\rightarrow\irrsigma(B_0(N)).$$
 \end{thm}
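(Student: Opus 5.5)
The plan is to deduce the $\sigma$-equivariant statement from the classical Alperin--Dade correspondence for principal blocks, and then to check that both the $p'$-degree condition and $\sigma$-invariance transfer along it.

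First, the classical theorem of Alperin (for $G/N$ solvable) and Dade (in general) says the following. If $N\normal G$ with $p\nmid|G:N|$ and $G=N\cent G P$, then restriction $\chi\mapsto\chi_N$ is a bijection $\Irr(B_0(G))\to\Irr(B_0(N))$. Since $p\nmid|G:N|$, we have $P\le N$, so $P\in\Syl_p(N)$ as well. I will quote this bijection directly. What remains is to show that it maps $\irrsigma(B_0(G))$ onto $\irrsigma(B_0(N))$.

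Second, degrees and heights. For $\chi\in\Irr(B_0(G))$ we have $\chi(1)=\chi_N(1)$, and $P$ is a common Sylow $p$-subgroup of $G$ and $N$. Hence $\chi$ has $p'$-degree if and only if $\chi_N$ does. So the bijection restricts to $\Irr_0(B_0(G))\to\Irr_0(B_0(N))$.

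Third, Galois invariance. Restriction commutes with the Galois action: $(\chi^\sigma)_N=(\chi_N)^\sigma$. Also, $\sigma$ permutes $\Irr(B_0(G))$, since Galois conjugation preserves the principal block. This holds because central characters reduced modulo $p$ transform compatibly under $\sigma$, and the trivial character is $\sigma$-fixed. Similarly, $\sigma$ permutes $\Irr(B_0(N))$.

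If $\chi^\sigma=\chi$, then clearly $(\chi_N)^\sigma=\chi_N$. Conversely, suppose $(\chi_N)^\sigma=\chi_N$. Then $\chi^\sigma\in\Irr(B_0(G))$ and $(\chi^\sigma)_N=\chi_N$. Injectivity of the restriction map then gives $\chi^\sigma=\chi$. Combining this with the second step yields the bijection $\irrsigma(B_0(G))\to\irrsigma(B_0(N))$.

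The only substantive input is the Alperin--Dade theorem itself. The one point needing care is that $\sigma$ preserves principal blocks, which is standard. I expect no real obstacle: the same argument works for any Galois automorphism in place of $\sigma$.
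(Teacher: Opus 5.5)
Your proposal is correct and follows the paper's proof: the paper cites Alperin (for $G/N$ solvable) and Dade (in general) for the restriction bijection $\Irr(B_0(G))\to\Irr(B_0(N))$, then concludes from $(\chi^\sigma)_N=(\chi_N)^\sigma$. You additionally spell out two checks the paper leaves implicit: that $P\le N$, so $p'$-degree (height zero) is preserved, and that $\sigma$ stabilizes the principal blocks, which is needed to invoke injectivity.
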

 \begin{proof}
     The fact that restriction defines a bijection
     $\Irr(B_0(G))\rightarrow\Irr(B_0(N))$ was proved when $G/N$ is solvable in \cite{alperin76} and without the solvability hypothesis in \cite{dade77}. Since $\chi_N^\sigma=(\chi^\sigma)_N$, the result follows.
 \end{proof}

\begin{lem}\label{lem:3.7MMSV}
Let $G$ be a finite group $H\leq G$ with $p\nmid |G:H|$. Let $\tau\in\Gal(\mathbb{Q}_{|G|}/\mathbb{Q})$ have $p$-power order. Assume that $\cent G Q\sbs H$ for $Q\in\Syl_p(H)$. If $\theta\in\irrtau(B_0(H))$ then $\theta^G$ contains a constituent $\chi\in\irrtau(B_0(G))$.
\end{lem}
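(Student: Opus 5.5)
The plan is to combine Brauer's Third Main Theorem with a counting argument modulo $p$, using that $\tau$ has $p$-power order.

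\emph{Step 1: every constituent of $\theta^G$ lies in an admissible block.} Since $\cent G Q\sbs H$, Brauer's Third Main Theorem (in the form \cite[Thm.~6.7]{Nav98}) says the induced block $B_0(H)^G$ is defined and equals $B_0(G)$. Since $p\nmid|G:H|$, $Q$ is a Sylow $p$-subgroup of $G$, so $B_0(H)$ and $B_0(G)$ both have maximal defect. By \cite[Cor.~6.2]{Nav98} (or the central character argument), every irreducible constituent $\chi$ of $\theta^G$ satisfies $\lambda_{B_0(H)}^G=\lambda_{\mathrm{bl}(\chi)}$ on the relevant class sums.

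\emph{Step 2: reduce modulo $p$ to find height-zero constituents in $B_0(G)$.} Decompose $\theta^G=\sum_\chi a_\chi\chi$. We have $\theta^G(1)=|G:H|\theta(1)$, which is prime to $p$. The usual argument uses the central character of $B_0(G)$ evaluated at $\hat K$ for a suitable class, or the projection $e_{B_0(G)}$. I would use the standard fact, behind the proof that induced blocks are defined (e.g.\ \cite[Lem.~6.5, Thm.~6.7]{Nav98}), that the constituents of $\theta^G$ lying in $B_0(G)=B_0(H)^G$ have total degree $\sum_{\chi\in B_0(G)}a_\chi\chi(1)\not\equiv 0 \pmod p$. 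Concretely, $\omega_{\chi}(\hat K)$ for $\chi$ a constituent and $\hat K$ a $p$-regular class sum relates to $\omega_\theta$ via the Brauer homomorphism, and $\cent G Q\sbs H$ forces the defect classes to match. This fact is where the real content lies and is the step I expect to be the main obstacle. My first attempt would be to quote the Alperin--Dade/Okuyama-type statement that the projection of $\theta^G$ onto $B_0(G)$ has $p'$-degree when $\theta$ has height zero in $B_0(H)$ and $\cent GQ\sbs H$.

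\emph{Step 3: $\tau$-equivariance.} Since $\theta^\tau=\theta$, we get $(\theta^G)^\tau=\theta^G$, so $a_{\chi^\tau}=a_\chi$. Galois conjugation preserves blocks of maximal defect up to Galois action, and $B_0(G)^\tau=B_0(G)$ because the trivial character is rational. Hence $\langle\tau\rangle$ permutes the set of constituents of $\theta^G$ in $B_0(G)$ of $p'$-degree, preserving both multiplicities and degrees. Now take the sum $\sum a_\chi\chi(1)$ over constituents in $B_0(G)$. Those of degree divisible by $p$ contribute $0$ mod $p$. The nontrivial $\langle\tau\rangle$-orbits among the $p'$-degree ones have size a positive power of $p$, since $\tau$ has $p$-power order, so they contribute $0$ mod $p$ as well. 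The sum is nonzero mod $p$ by Step 2, so some orbit has size one. That is, there is a $\tau$-invariant $\chi\in\Irr_0(B_0(G))$ that is a constituent of $\theta^G$, i.e.\ $\chi\in\irrtau(B_0(G))$, as claimed.
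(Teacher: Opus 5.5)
Your strategy is the paper's. The paper defers to the proof of \cite[Lem.~3.7]{Mar-Mar-Sch-Val24}, which shows that the $B_0(G)$-part of $\theta^G$ has degree prime to $p$ and then counts modulo $p$ over $\langle\tau\rangle$-orbits. Your Step~3 is correct as written. Two things need fixing, however.

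First, the last sentence of Step~1 is false: not every constituent of $\theta^G$ lies in $B_0(H)^G$. Take $G=\mathsf{S}_4$, $p=3$, $H=\norm GQ\cong\mathsf{S}_3$ and $\theta=1_H$. All hypotheses hold, since $\cent GQ=Q$ and $|G:H|=4$. Yet $\theta^G=1_G+\chi$ with $\chi(1)=3$, and this $\chi$ has defect zero. You never use this claim, since Step~3 only looks at constituents in $B_0(G)$, so simply delete it.

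Second, and more seriously, Step~2 is the real content of the lemma, and you leave it unproved. The routes you suggest would not supply it. Alperin--Dade needs $H\normal G$ with $G=H\cent GP$, which is not available here, and matching of defect classes is irrelevant. The hypothesis $\cent GQ\sbs H$ only makes $B_0(H)^G$ defined, so that $\lambda_{B_0(H)}^G=\lambda_{B_0(G)}$ by the Third Main Theorem. The congruence then follows directly from the definition of induced blocks. Write $\theta^G=\sum_\chi a_\chi\chi$. For each class $K$ of $G$ with representative $g_K$, the induction formula gives
\[
\sum_{\chi}a_\chi\chi(1)\,\omega_\chi(\hat K)=|K|\,\theta^G(g_K)=|G:H|\,\theta(1)\,\omega_\theta(\widehat{K\cap H}).
\]
Reduce modulo the maximal ideal and group the left side by blocks. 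As $F$-linear functionals on $Z(FG)$ this gives
\[
\sum_{B}\Bigl(\sum_{\chi\in\Irr(B)}a_\chi\chi(1)\Bigr)^{*}\lambda_B=\bigl(|G:H|\theta(1)\bigr)^{*}\lambda_{B_0(H)}^G=\bigl(|G:H|\theta(1)\bigr)^{*}\lambda_{B_0(G)}.
\]
Evaluate at the block idempotent of $B_0(G)$ in $Z(FG)$, or use linear independence of distinct central characters. This yields
\[
\sum_{\chi\in\Irr(B_0(G))}a_\chi\chi(1)\equiv|G:H|\,\theta(1)\not\equiv 0 \pmod p,
\]
because $p\nmid|G:H|$ and $\theta$ has $p'$-degree, being of height zero in a block of defect group $Q\in\Syl_p(H)$. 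With this two-line argument in place of your Step~2, your Step~3 completes the proof.
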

\begin{proof}
This follows the proof of \cite[Lem.~3.7]{Mar-Mar-Sch-Val24} using that $\langle \tau\rangle$ has $p$-power order.
\end{proof}

\begin{lem}\label{lem:linear sigma-invariant}
    Let $N\normal G$ be a $p$-subgroup contained in $P\in\Syl_p(G)$. Further, assume $\theta\in\Lin(N)$ is $G$-invariant.  Let $\tau\in\Gal(\mathbb{Q}_{|G|}/\mathbb{Q})$ have $p$-power order and assume $\theta$ extends to $\lambda\in\irrtau(P)$. Then $\theta$ extends to a $\langle\tau\rangle$-invariant character in $B_0(G)$.
\end{lem}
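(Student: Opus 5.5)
We aim to extend $\theta$ to the whole of $G$ in two stages, first inside $P$ and then from $P$ to $G$ by counting, and afterwards show the resulting extension is $\tau$-invariant and lies in $B_0(G)$. The natural tool is Gallagher-type counting combined with a Glauberman-style orbit argument for the $p$-group $\langle\tau\rangle$.

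\textbf{Step 1: $\theta$ extends to $G$.} Since $N$ is a $p$-group, $N\subseteq\Oh{p'}{G}$-type arguments are not needed. Instead we use that a $G$-invariant character extends to $G$ if and only if it extends to a Sylow $p$-subgroup containing $N$, for every prime. For primes $q\neq p$ the group $N$ is a $q'$-group, so $(|N|,|G:N|)_q$-coprimality gives extendibility by the standard theorem (Isaacs, Cor.~11.31 / Thm.~6.26 for coprime order and determinantal order). For $q=p$ we have the given extension $\lambda$ to $P$. Hence $\theta$ extends to some $\hat\theta\in\Irr(G)$.

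\textbf{Step 2: choose an extension in $B_0(G)$.} By Gallagher, the extensions of $\theta$ are $\hat\theta\beta$ with $\beta\in\Lin(G/N)$. Since $\hat\theta$ is linear on $N$ and $N$ is a normal $p$-subgroup, $\hat\theta$ has $p'$-degree when restricted appropriately. We want to use that $N\subseteq\oh pG$, so every block of $G$ covers $B_0(N)$ trivially. Instead I would argue directly: the restriction $\hat\theta_P$ differs from $\lambda$ by a linear character of $P/N$, and we can adjust. A cleaner route is to take the $p'$-part. Write $\hat\theta=\hat\theta_p\hat\theta_{p'}$ as a product of linear-type factors via the determinantal order decomposition (Isaacs, Lem.~8.16 style, applied to $\det$). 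Choose the unique extension $\mu$ with $o(\det\mu)$ a $p$-power; this is possible since $o(\theta)$ is a $p$-power. This canonical extension is unique, hence fixed by $\tau$, because $\theta^\tau=\theta$: here $\theta^\tau$ is an extension of $\lambda^\tau|_N=\lambda|_N=\theta$, and $\mu^\tau$ is again an extension with $p$-power determinantal order. However, uniqueness needs $\mu$ to be characterised within a coset of $\Lin(G/N)$ of $p'$-elements only, and the $p$-part of $\Lin(G/N)$ obstructs this. Handling that part is where $\lambda$ is used.

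\textbf{Step 3: the main obstacle, $\tau$-invariance plus principal block.} The set $\mathcal E$ of extensions of $\theta$ lying in $B_0(G)$ is nonempty. To see this, take any extension $\hat\theta$ and multiply by a suitable linear character of $p'$-order of $G/N$. Since $N$ is a $p$-group, $\hat\theta$ and $\hat\theta_{P}$ determine the block through the central character on $p$-regular classes. So $\hat\theta\beta\in B_0(G)$ for appropriate $\beta$ of $p'$-order, namely when $\hat\theta\beta$ is trivial on $p$-regular elements mod $\pi$. Then $\mathcal E$ is acted on by $\Lin_p(G/N)\cap\Irr(B_0(G/N))$, which is a $p$-group $A$, regularly. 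The group $\langle\tau\rangle$ acts on $\mathcal E$ compatibly, since $B_0(G)$ and $\theta$ are $\tau$-stable. I would then restrict to $P$, comparing $\mathcal E$ with the extensions of $\theta$ to $P$. The map $\mathcal E\to\{\text{extensions of }\theta\text{ to }P\}$, $\chi\mapsto\chi_P$, is $\langle\tau\rangle$-equivariant and $A$-equivariant. $A$ embeds into $\Lin(P/N)$ via restriction of $p$-power order linear characters, since these are determined on $P$. Using $\lambda$, which is $\tau$-fixed, we can identify extensions to $P$ with $\Lin(P/N)$ $\tau$-equivariantly. The image of $\mathcal E$ is then a $\tau$-stable coset of the subgroup $A|_P$.

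The remaining claim is that a $\tau$-stable coset $\lambda_0 A|_P$ inside the group $\Lin(P/N)$ contains a $\tau$-fixed point. This uses that $\lambda$ corresponds to $1$ and is $\tau$-fixed. Concretely, I would show that the coset contains $1$, that is, that $\lambda$ itself is a restriction of some element of $\mathcal E$. To do this I would use the focal subgroup: $\chi\in\mathcal E$ restricted to $P$ is trivial on $P\cap G'\cdots$. If this fails, I would fall back to Glauberman's lemma, taking $p$-group acting on $p$-group cosets is insufficient. Instead I would use the cohomological count $H^1(\langle\tau\rangle,A)$ together with the fact that $\lambda$ gives a fixed point in the ambient $\Lin(P/N)$. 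I expect this step, producing a genuinely $\tau$-fixed element of $\mathcal E$, to be the hardest part.
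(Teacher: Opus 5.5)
Your preliminary reductions are fine. $\theta$ extends to $G$. Its extensions of $p$-power order form a torsor under the $p$-power order linear characters of $G/N$, and all of them lie in $B_0(G)$. Restriction to $P$ is injective and $\tau$-equivariant on them. (Your claim that this $p$-group acts regularly on \emph{all} extensions lying in $B_0(G)$ is wrong: for $p=3$ the sign character lies in $B_0(\mathsf{S}_3)$. Working only with $p$-power order extensions repairs this.)

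The gap is Step~3, which is the whole content of the lemma. You never produce a $\tau$-fixed extension, and neither route you suggest can work.

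\begin{enumerate}
\item Showing that $\lambda$ itself is the restriction of an extension is false in general. Take $p=2$, $G=\mathsf{A}_4$, $N=1$, $P\cong C_2\times C_2$ and $\lambda\neq 1_P$. Then $\lambda$ is rational, hence $\tau$-fixed, but every linear character of $G$ is trivial on $P=G'$.
\item The formal argument ``$\tau$-stable coset of $A|_P$ in $L=\Lin(P/N)$, plus the fixed point coming from $\lambda$'' cannot succeed on its own. Take $L=\Lin(C_{p^2})$ with $\tau=\sigma$ acting as $\gamma\mapsto\gamma^{1+p}$. The coset of a faithful $\gamma$ modulo the subgroup of order $p$ is $\sigma$-stable, yet all its elements are faithful and none is $\sigma$-fixed. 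The obstruction is the image of the coset under the connecting map $(L/A|_P)^{\tau}\to H^1(\langle\tau\rangle,A|_P)$, and the existence of $\lambda$ says nothing about it. You need an argument that uses $G$ itself, not just the stability of the coset.
\end{enumerate}

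The missing idea is to build a $\tau$-fixed linear character of $G$ directly from $\lambda$. Write $P\cent G P=P\times X$. The paper takes $\chi\in\irrtau(B_0(G))$ lying over $\lambda\times 1_X\in\Irr(B_0(P\cent G P))$, via Lemma~\ref{lem:3.7MMSV}. Even without blocks, some $\tau$-fixed constituent of $\lambda^G$ has $p'$-degree, since $\lambda^G(1)=|G:P|$ and $\langle\tau\rangle$ is a $p$-group.

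Since $\theta$ is $G$-invariant, $\chi_N=\chi(1)\theta$. So $\mathrm{det}(\chi)$ is a $\tau$-fixed linear character with $\mathrm{det}(\chi)_N=\theta^{\chi(1)}$. Choose $b$ with $b\chi(1)\equiv 1$ modulo $o(\theta)$ and $b$ divisible by the $p'$-part of the order of $\mathrm{det}(\chi)$. Then $\mathrm{det}(\chi)^b$ is a $\tau$-fixed extension of $\theta$ of $p$-power order, hence lies in $B_0(G)$.

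In the spirit of your focal-subgroup remark, you could equally use the transfer $V\colon G\to P/P'$. The character $\lambda\circ V$ is $\tau$-fixed, has $p$-power order, and satisfies $(\lambda\circ V)_N=\theta^{|G:P|}$ by $G$-invariance of $\theta$. A suitable power of it finishes the proof.
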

\begin{proof}
Let $H=P\cent G P$ and write $H=P\times X$.  Letting $\hat{\lambda}=\lambda\times 1_X\in\Irr_{0,\tau}(B_0(H))$, we have that $\hat\lambda^G$ contains some $\chi\in\irrtau(B_0(G))$ by Lemma \ref{lem:3.7MMSV}. Now $\det{\chi^\tau}=\det{\chi}^\tau$ by \cite[Prob.~3.8]{Nav18}, so $\delta:=\det{\chi}^b$ (where $b$ is as in the proof of \cite[Lem.~2.5]{Gia-Riz-Sch-Val24}) is $\langle \tau\rangle$-invariant, lies in $B_0(G)$ and extends $\theta$.
\end{proof}

The following is a principal block version of \cite[Lem.~3]{Mor-Sam23}.

\begin{lem}\label{lem:kernel of sigmas}
Let $G$ be a finite group with $\oh{p'}G=1$ and let $P\in\Syl_p(G)$. Then
$$K=\bigcap_{\chi\in\irrsigma(B_0(G))}\ker\chi\leq \Phi(P).$$
\end{lem}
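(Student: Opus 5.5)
The plan is to prove the statement in two stages. First, show $K\cap P\le \Phi(P)$ by producing enough characters in $\irrsigma(B_0(G))$ via Lemma \ref{lem:3.7MMSV}. Then use Tate's theorem together with $\oh{p'}G=1$ to conclude that $K$ is a $p$-group, so that $K=K\cap P\le\Phi(P)$.

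\emph{Step 1: $\sigma$ has $p$-power order and fixes the relevant linear characters.} Restricted to $\mathbb{Q}_{|G|}$, the automorphism $\sigma$ fixes $p'$-roots of unity and acts on the $p$-power roots of unity of order $p^a$ by $\zeta\mapsto\zeta^{1+p}$. Since $1+p$ has $p$-power order in $(\mathbb{Z}/p^a\mathbb{Z})^\times$, the restriction of $\sigma$ has $p$-power order. Moreover, $\zeta^{1+p}=\zeta$ for every $p$-th root of unity $\zeta$. Every $\lambda\in\Lin(P/\Phi(P))$ takes values in $p$-th roots of unity, because $P/\Phi(P)$ is elementary abelian. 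Hence every such $\lambda$ is $\sigma$-invariant.

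\emph{Step 2: $K\cap P\le \Phi(P)$.} Put $H=P\cent G P$. By Schur--Zassenhaus, $H=P\times X$ with $X$ a $p'$-group, and $\cent G P\sbs H$, so $p\nmid|G:H|$. Fix $\lambda\in\Lin(P/\Phi(P))$. The character $\lambda\times 1_X$ lies in $B_0(H)$ by Lemma \ref{lem:princblockabove}(v), since every character of the $p$-group $P$ and the trivial character of $X$ lie in the respective principal blocks. It is linear and, by Step 1, $\sigma$-invariant. By Lemma \ref{lem:3.7MMSV} with $\tau=\sigma$, the induced character $(\lambda\times1_X)^G$ has a constituent $\chi\in\irrsigma(B_0(G))$. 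By Frobenius reciprocity, $\lambda$ is a constituent of $\chi_P$. Since $K\cap P\le\ker\chi$, the group $K\cap P$ acts trivially in a representation affording $\chi_P$, so $K\cap P\le \ker\lambda$. Intersecting over all $\lambda\in\Lin(P/\Phi(P))$ gives $K\cap P\le\Phi(P)$.

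\emph{Step 3: $K$ is a $p$-group.} The group $K$ is normal in $G$, being an intersection of kernels of a $G$-stable family of characters. Also $K\cap P\in\Syl_p(K)$ and $K\cap P\le\Phi(P)$. By Tate's theorem (if $N\normal G$ and $N\cap P\le\Phi(P)$, then $N$ has a normal $p$-complement), $K$ is $p$-nilpotent. Then $\oh{p'}K$ is characteristic in $K$, hence normal in $G$, so $\oh{p'}K\le\oh{p'}G=1$. Therefore $K$ is a $p$-group, and so $K=K\cap P\le \Phi(P)$.

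The main point requiring care is Step 2: checking that Lemma \ref{lem:3.7MMSV} applies, namely that $\sigma$ has $p$-power order, that $\cent GP\sbs H$, and that $\lambda\times1_X$ is a height-zero, $\sigma$-invariant character of $B_0(H)$. The final step is a standard appeal to Tate's theorem.
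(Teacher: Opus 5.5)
Your proposal is correct and follows the paper's proof essentially step by step. For each $\lambda\in\Irr(P/\Phi(P))$ you use Lemma \ref{lem:3.7MMSV} to find a character of $\irrsigma(B_0(G))$ lying over $\lambda\times 1_X$, which gives $K\cap P\le\Phi(P)$, and you then finish with Tate's theorem and $\oh{p'}G=1$; your explicit checks that $\sigma$ has $p$-power order and fixes the linear characters of $P/\Phi(P)$ fill in details the paper leaves implicit, as does your writing $(\lambda\times 1_X)^G$ where the paper writes $\lambda^G$.
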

\begin{proof}
 Let $\lambda\in\Irr(P/\Phi(P))$. Write $P\cent G P=P\times X$ and let $\psi=\lambda\times 1_X\in\irrsigma(B_0(P\cent G P))$. By Lemma \ref{lem:3.7MMSV} we have that $\lambda^G$ contains a constituent $\chi\in\irrsigma(B_0(G))$. By Frobenius reciprocity, $\chi_{P\cent G P}$ contains $\lambda\times 1_X$ so $\chi_P$ contains $\lambda_P$. Moreover $\chi_{P\cap K}=\chi(1)1_{P\cap K}$ contains $\lambda_{P\cap K}$ which shows that $\ker{\lambda}$ contains $P\cap K$. Therefore $$P\cap K\sbs\bigcap_{\lambda\in\Irr(P/\Phi(P))}\ker\lambda=\Phi(P).$$
By Tate's theorem \cite[Cor.~6.14]{Nav18}, this implies that $K$ is $p$-nilpotent. Then if $K$ is not a $p$-group, $\oh{p'}K>1$, which contradicts $\oh{p'}G=1$. Therefore $K$ is a normal $p$-subgroup so $K=P\cap K\sbs\Phi(P)$, as desired.
\end{proof}

The following is a variation of \cite[Thm.~3.5]{Gia-Riz-Sch-Val24}.

\begin{lem}\label{lem:tensor induction sigma}
    Assume $N\normal G$ is a direct product $N=S_1\times \dots\times S_t$ of nonabelian simple groups of order divisible by $p$ transitively permuted by $G$. Let $\theta=\theta_1\times\dots\times \theta_t$ be $G$-invariant. If $\theta_1$ extends to some $\sigma$-invariant irreducible character of $p'$-degree in $B_0(T)$ for all $S_1\leq T\leq\Aut(S_1)_{\theta_1}$, then $\theta$ extends to some $\chi\in\irrsigma(B_0(G))$
\end{lem}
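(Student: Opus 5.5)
We follow the proof of \cite[Thm.~3.5]{Gia-Riz-Sch-Val24}, which is the tensor induction argument, and at each step check that $\sigma$-invariance, the $p'$-degree condition and membership in the principal block are preserved.

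\textbf{Reduction to a direct product.} Write $H=\norm G{S_1}$ and $C=\cent G{S_1}$, so that $H/C$ embeds in $\Aut(S_1)$ and contains $S_1C/C\cong S_1$. Set $T=(H/C)_{\theta_1}$, which satisfies $S_1\le T\le\Aut(S_1)_{\theta_1}$. Since $\theta$ is $G$-invariant, $H_{\theta_1}\supseteq H\cap G_\theta=H$, so $T=H/C$.

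\textbf{Extension to $H$.} By hypothesis, $\theta_1$ extends to some $\psi_1\in\irrsigma(B_0(T))$. Inflate $\psi_1$ to $H$ and, abusing notation, call it $\psi_1$. By Lemma \ref{lem:princblockabove}(i) it lies in $B_0(H)$, and it is still $\sigma$-invariant of $p'$-degree. Next we need $\psi:=\psi_1\cdot(1_{S_2}\times\cdots)$, or more precisely an extension to $H$ of $\theta_1\times 1\times\cdots\times 1$ regarded as a character of $N$ with kernel containing $S_2\times\cdots\times S_t\le C$. Since $S_1C/C\cong S_1$, the inflated $\psi_1$ restricts to $N$ as $\theta_1\times 1_{S_2\times\cdots\times S_t}$.

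\textbf{Tensor induction.} Form $\chi:=\psi_1^{\otimes G}$, the tensor induced character from $H$ to $G$. As in \cite[Thm.~3.5]{Gia-Riz-Sch-Val24} (via \cite[Lem.~10.?]{Nav18}-type tensor induction formulas, using a transversal permuting the $S_i$), $\chi_N=\theta_1^{g_1}\times\cdots\times\theta_1^{g_t}$. By $G$-invariance of $\theta$ this equals $\theta$, so $\chi$ is irreducible and extends $\theta$. Its degree is $\psi_1(1)^t$, which is prime to $p$. For $\sigma$-invariance, the values of a tensor induced character are given by the standard formula $\chi(g)=\prod\psi_1(\text{products of conjugates of }g)$, which is a polynomial with integer coefficients in values of $\psi_1$. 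Hence $\chi^\sigma=(\psi_1^\sigma)^{\otimes G}=\chi$.

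\textbf{Principal block membership (the main obstacle).} This requires the known fact, proved in \cite[Thm.~3.5]{Gia-Riz-Sch-Val24} (originally due to Navarro--Tiep-type arguments on central characters), that tensor induction of a character in $B_0(H)$ lies in $B_0(G)$ when $|G:H|$ is involved as above. Since the block-membership claim there is independent of the Galois action, we would cite it verbatim.

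If that citation were not directly available, we would argue instead as follows. Let $\chi'\in\Irr(B_0(G))$ lie over $\theta$, which exists by Lemma \ref{lem:princblockabove}(ii)–(iii) since $\theta\in B_0(N)$ by Lemma \ref{lem:princblockabove}(v). Then $\chi=\beta\chi'$ for some linear $\beta$ of $G/N$ (Gallagher). The fallback would then need to show that one can choose this twist so that the result lies in $B_0$, while keeping both $\sigma$-invariance and the $p'$-degree. Because of that extra work, the cleanest route is to rely on the existing tensor-induction block result.
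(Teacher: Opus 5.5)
Your proposal is correct and is essentially the paper's proof. You extend $\theta_1$ to a $\sigma$-invariant $p'$-degree character of $B_0(\norm G{S_1})$ by inflating from $\norm G{S_1}/\cent G{S_1}\le\Aut(S_1)_{\theta_1}$, tensor induce to $G$, cite \cite[Thm.~3.5]{Gia-Riz-Sch-Val24} for extension of $\theta$ and membership in $B_0(G)$, and use the tensor induction value formula \cite[Def.~2.1]{Glu-Isa83} to get $\mathbb{Q}(\chi)\subseteq\mathbb{Q}(\hat\theta_1)$ and hence $\sigma$-invariance; the Gallagher fallback paragraph is unnecessary, and your placeholder reference should just point to \cite[Sec.~10.5]{Nav18} or \cite[Def.~2.1]{Glu-Isa83}.
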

\begin{proof}
    Following the proof of \cite[Thm.~3.5]{Gia-Riz-Sch-Val24} we have that $\theta_1$ extends to some $\hat\theta_1$ in $\irrsigma(B_0(\norm G {S_1}))$ by hypothesis. Let $\chi$ be the tensor induced character $\chi=\hat\theta_1^\otimes$ (see \cite[Sec.~10.5]{Nav18}), which lies in $B_0(G)$ and extends $\theta$ (again by \cite[Thm.~3.5]{Gia-Riz-Sch-Val24}). By the formula in \cite[Def.~2.1]{Glu-Isa83}, $\mathbb{Q}(\chi)\sbs\mathbb{Q}(\hat\theta_1)$ and it follows that $\chi$ is $\sigma$-invariant, as desired.
\end{proof}

The following are the only results of this section that are not valid for arbitrary primes. Next is a $\sigma$-version of \cite[Thm.~B]{Gia-Riz-Sch-Val24}.

\begin{pro}\label{pro:noelia relative sigma} Let $G$ be a finite group, $p\in\{2,3\}$, and $P\in\Syl_p(G)$. Let $N\normal G$ and assume $p$ divides $|G:N|$. Suppose that $\theta\in\Irr(B_0(N))$ is $P\times\langle \sigma\rangle$-invariant. Then $p$ divides $ \ksigma(B_0(G)|\theta)$. In particular, if $\theta$ extends to $\hat\theta\in\irrsigma(B_0(PN))$ then $\ksigma(B_0(G)|\theta)\geq p$.
\end{pro}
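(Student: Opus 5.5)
Following the proof of \cite[Thm.~B]{Gia-Riz-Sch-Val24}, the idea is to let a cyclic $p$-group act on $\irrsigma(B_0(G)|\theta)$ and show it has no fixed points, so every orbit has size divisible by $p$.

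\textbf{Step 1 (reduction to $G_\theta$).} Let $T=G_\theta$. Then $P\leq T$, and $p$ still divides $|T:N|$. By the Clifford correspondence together with the block version for principal blocks (as in \cite{Gia-Riz-Sch-Val24}), induction gives a bijection $\Irr_0(B_0(T)|\theta)\to\Irr_0(B_0(G)|\theta)$. Since $\theta$ is $\sigma$-invariant, this bijection commutes with $\sigma$. Heights are preserved because $|G:T|$ is prime to $p$. So we may assume $\theta$ is $G$-invariant.

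\textbf{Step 2 (the group action).} With $\theta$ $G$-invariant, let $K/N=\Oh{p}{G/N}$, so $K<G$ and $G/K$ is a nontrivial $p$-group. The group $\Lin(G/K)$ is a nontrivial $p$-group, and multiplication by its elements preserves $B_0(G)$ (linear characters of $p$-power order lie in $B_0(G/K)\subseteq B_0(G)$ and multiply principal block characters into the principal block). It also preserves $p'$-degree and lying over $\theta$. The action is compatible with $\sigma$ only up to twisting, because $\lambda^\sigma=\lambda^{1+p}$. For $p\in\{2,3\}$ I would restrict to $\Omega=\{\lambda\in\Lin(G/K):\lambda^p=1\}$. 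For such $\lambda$ we have $\lambda^\sigma=\lambda^{1+p}=\lambda$, so $\Omega$ acts on $\irrsigma(B_0(G)|\theta)$. (This uses only $\lambda^p=1$; the restriction to $p\in\{2,3\}$ will enter elsewhere.)

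\textbf{Step 3 (no fixed points; the main obstacle).} If $\chi\lambda=\chi$ with $1\neq\lambda$ and $\chi(1)$ prime to $p$, then $\chi$ vanishes off $\ker\lambda$. Let $M=\ker\lambda$, a normal subgroup of index $p$ containing $K$. Then $\chi_M$ is a sum of $p$ distinct conjugates, so $\chi$ is induced from $M$, and hence $p\mid\chi(1)$, a contradiction. Therefore every $\Omega$-orbit on $\irrsigma(B_0(G)|\theta)$ has size divisible by $p$, and $p\mid\ksigma(B_0(G)|\theta)$.

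The subtle points are, first, ensuring $\Omega\neq 1$, which holds because $G/K$ is a nontrivial $p$-group. Second, one must check whether the argument really needs $p\in\{2,3\}$. If the simple orbit argument above leaves a gap (for instance, if the Step 1 bijection fails to preserve $\sigma$-invariance when $G_\theta$ is not $\sigma$-stable), I would instead mimic \cite{Gia-Riz-Sch-Val24} via a Gallagher-type count over $G/N$ restricted to $B_0$. That route uses that $\sigma$ acts trivially on characters of $p$-groups of exponent $p$, and it is where $p\le 3$ would matter. I expect verifying this compatibility to be the hardest step.

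\textbf{Step 4 (the ``in particular'').} Let $\hat\theta\in\irrsigma(B_0(PN))$ extend $\theta$. Since $P/(P\cap N)$ acts and $\cent G{Q}$ conditions hold for $PN\cdot\cent G P$, apply Lemma~\ref{lem:3.7MMSV}, with $H=PN\cent GP$ or via a Frattini-type argument, to find some $\chi\in\irrsigma(B_0(G))$ over $\hat\theta$, hence over $\theta$. Then $\ksigma(B_0(G)|\theta)\geq1$, and divisibility by $p$ forces $\ksigma(B_0(G)|\theta)\geq p$.
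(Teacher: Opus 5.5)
Your Step~2 rests on a false claim. The hypothesis $p\mid|G:N|$ (or $p\mid|G_\theta:N|$) does not imply that $G_\theta/N$ has a nontrivial $p$-quotient. If $G_\theta/N$ is $p$-perfect, for instance nonabelian simple, then $K=G_\theta$ and $\Omega=1$, so the orbit argument gives no information.

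For a concrete case, take $G=\mathsf{A}_5$, $N=1$, $\theta=1_N$ and $p=3$. There are no nontrivial linear characters, yet the statement asserts $3\mid\ksigma(B_0(\mathsf{A}_5))$. This is true, since the principal $3$-block consists of the rational characters of degrees $1,4,5$.

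Your Steps~2--3 are valid whenever $G_\theta/N$ has a normal subgroup of index $p$, and then they work for every prime. That is the warning sign: nothing in your main argument uses $p\in\{2,3\}$, but the statement is false for $p=5$. With $G=\mathsf{A}_5$ and $N=1$, $\sigma$ fixes $\mathbb{Q}_{60}$ pointwise (as $\zeta_5^{6}=\zeta_5$), and $\Irr_0(B_0(\mathsf{A}_5))$ consists of the characters of degrees $1,3,3,4$, so $\ksigma(B_0(G))=4$. Hence no refinement of the linear-character action can close the gap. Your ``Gallagher-type'' fallback is too vague to count as a proof, and it puts the role of $p\le 3$ in the wrong place.

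The missing idea is a congruence on degrees, combined with the fact that $\langle\sigma\rangle$ is a $p$-group. Arguing as in \cite[Thm.~2.7]{Gia-Riz-Sch-Val24}, one has
$$\sum_{\chi\in\Irr_0(B_0(G)|\theta)}\chi(1)^2\equiv 0 \pmod p.$$
Since $\theta^\sigma=\theta$, the group $\langle\sigma\rangle$ permutes $\Irr_0(B_0(G)|\theta)$ and preserves degrees. The nontrivial orbits therefore contribute $0$ modulo $p$, which gives
$$\sum_{\chi\in\irrsigma(B_0(G)|\theta)}\chi(1)^2\equiv 0\pmod p.$$
Only at this point does $p\in\{2,3\}$ enter: every integer $u$ prime to $p$ satisfies $u^2\equiv 1\pmod p$, so the last sum is congruent to $\ksigma(B_0(G)|\theta)$.

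In Step~4 there is also a smaller gap. Before Lemma~\ref{lem:3.7MMSV} applies with $H=PN\cent GP$, you need a character of $\irrsigma(B_0(PN\cent GP))$ lying over $\theta$. You get it by extending $\hat\theta$ via Theorem~\ref{thm:Alperin-Dade}, since $PN\normal PN\cent GP$ with $p'$-index.
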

\begin{proof}
    Arguing as in \cite[Thm.~2.7]{Gia-Riz-Sch-Val24} we get that
    $$\sum_{\chi\in\Irr_0(B_0(G)|\theta)}\chi(1)^2\equiv 0\mod p.$$

    Recall that $\langle \sigma\rangle$ is a $p$-group and acts on $\Irr_0(B_0(G)|\theta)$. Write
    $$\Irr_0(B_0(G)|\theta)=\irrsigma(B_0(G)|\theta)\cup\Omega_1\cup\dots\cup\Omega_t$$
    where the $\Omega_i$'s are the nontrivial $\langle \sigma\rangle$-orbits, and let $\chi_i$ be a representative of each $\Omega_i$. Since each element of $\Omega_i$ has degree $\chi_i(1)$ and $|\Omega_i|\equiv 0\mod p$, we have
    $$\sum_{\chi\in\Irr_0(B_0(G)|\theta)}\chi(1)^2=\sum_{\chi\in\irrsigma(B_0(G)|\theta)}\chi(1)^2+\sum_{i=1}^t|\Omega_i|\chi_i(1)^2\equiv\sum_{\chi\in\irrsigma(B_0(G)|\theta)}\chi(1)^2 \mod p$$
and this shows that 
$$\sum_{\chi\in\irrsigma(B_0(G)|\theta)}\chi(1)^2 \equiv 0 \mod p.$$
Now, for every $\chi\in\irrsigma(B_0(G)|\theta)$ we have $\chi(1)^2\equiv 1 \mod p$ and we conclude that $p$ divides
$\ksigma(B_0(G)|\theta)$, as desired.

For the final part, notice that if there is some $\hat\theta\in\irrsigma(B_0(PN)|\theta)$ extending $\theta$ then there is $\psi\in\irrsigma(B_0(PN\cent G P)|\theta)$ extending $\hat\theta$ by Theorem \ref{thm:Alperin-Dade}. By Lemma \ref{lem:3.7MMSV}, $\psi^G$ contains some $\chi\in\irrsigma(B_0(G))$. Therefore $\ksigma(B_0(G)|\theta)>0$ by Frobenius reciprocity, so the result follows.
\end{proof}

\begin{rem}
The previous result is also true for non-principal blocks of maximal defect, where one has to use that they contain at least one $\sigma$-invariant height-zero character (this was proved in \cite[Prop.~2.6]{Bro-Pui80I}). 
\end{rem}

\begin{lem}\label{lem:p-action}
Assume $p\leq 3$ and that a $p$-group $P$ acts  by automorphisms on a group $K$ of order divisible by $p$. Then the set of $P$-invariant characters in $\irrsigma(B_0(K))$ is nonempty and has size divisible by $p$.
\end{lem}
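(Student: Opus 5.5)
Form the semidirect product $G=K\rtimes P$. Since $P$ acts on $K$ by automorphisms, $G$ is a finite group with $K\normal G$ and $G/K\cong P$ a $p$-group. We then apply Proposition \ref{pro:noelia relative sigma} to $G$, taking $N$ to be $K$ and $\theta$ ranging over the $P$-invariant characters in $\irrsigma(B_0(K))$. The goal is to count these $P$-invariant characters through the characters of $B_0(G)$ lying over them.

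Let $\mathcal{A}$ denote the set of $P$-invariant characters in $\irrsigma(B_0(K))$, and fix $\theta\in\Irr_0(B_0(K))$ lying under some $\chi\in\irrsigma(B_0(G))$.

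\textbf{Step 1: such $\theta$ belong to $\mathcal{A}$.} By Lemma \ref{lem:princblockabove}(iii), $\theta$ lies in $B_0(K)$. Since $G/K$ is a $p$-group and $\chi(1)$ is prime to $p$, Clifford theory forces $\theta$ to be $G$-invariant, hence $P$-invariant, and $\chi_K=\theta$. Because $\chi$ is $\sigma$-invariant, $\theta=\chi_K$ is $\sigma$-invariant as well.

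\textbf{Step 2: every $\theta\in\mathcal{A}$ is covered.} The block $B_0(G)$ is the unique block covering $B_0(K)$ by Lemma \ref{lem:princblockabove}(iv). We need some $\chi\in\irrsigma(B_0(G))$ extending $\theta$.
\begin{itemize}
\item If $p\nmid|K|$ this would be immediate, but $p$ divides $|K|$ by hypothesis, so we argue in general.
\item Since $\theta$ is $G$-invariant and $G/K$ is a $p$-group with $\theta(1)$ prime to $p$, $\theta$ extends to $G$ by the Gallagher/coprime extendibility for $p'$-degree invariant characters in $p$-extensions. Here I would use that $(|G:K|,o(\theta)\theta(1))$ may fail to be 1, so instead I invoke \cite[Cor.~6.27]{Nav18}-type results via a Sylow $p$-subgroup: $\theta$ extends to $G$ if and only if it extends to $Q K$ for $Q\in\Syl_p(G)$, and here $QK=G$.
\end{itemize}
I expect this step to be the main obstacle. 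The cleanest route is Proposition \ref{pro:noelia relative sigma}: take any $\theta\in\mathcal A$ and work with $G$ itself. In the case $P$ acts nontrivially, or more generally $p\mid |G:K|=|P|$ (assume $P\ne1$; if $P=1$, replace $G$ by $K\times C_p$), that proposition gives $p\mid\ksigma(B_0(G)|\theta)$.

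\textbf{Step 3: reduce the count modulo $p$.} By Step 1, every $\chi\in\irrsigma(B_0(G)|\theta)$ extends $\theta$. By Gallagher, these characters are of the form $\chi\lambda$ with $\lambda\in\Lin(G/K)$; since $p$-power roots of unity appear, $\sigma$-invariance cuts this down. Summing over $\theta$,
\[
\ksigma(B_0(G))=\sum_{\theta\in\mathcal A}\ksigma(B_0(G)|\theta).
\]
For each $\theta$, the $\sigma$-invariant extensions form a single orbit under the group $\Lin_\sigma(G/K)$ of $\sigma$-fixed linear characters, namely those of order dividing $p$ when $p$ is odd. This gives
\[
\ksigma(B_0(G)|\theta)=|\Lin(P/\Phi(P))|\quad\text{or}\quad 0.
\]
That clean formula is what would make the count of $\mathcal A$ accessible.

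\textbf{Step 4: conclude.} To get that $|\mathcal A|$ itself is divisible by $p$ and nonzero, I would instead apply Proposition \ref{pro:noelia relative sigma} and a $\sigma$-orbit counting argument directly to $K$:
\begin{itemize}
\item $\sum_{\chi\in\Irr_0(B_0(K))}\chi(1)^2\equiv0\pmod p$, which is the case $N=1$ of the proposition's argument, valid for $p\le3$ since $p\mid|K|$.
\item Nontrivial $\langle\sigma\rangle\times P$-orbits have $p$-power size greater than 1 and contribute $0$ modulo $p$.
\item Each remaining character in $\mathcal A$ has $\chi(1)^2\equiv1\pmod p$ because $p\le3$.
\end{itemize}
Hence $|\mathcal A|\equiv0\pmod p$.

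Nonemptiness holds because $1_K\in\mathcal A$. Together with divisibility this gives $|\mathcal A|\ge p$, completing the proof.
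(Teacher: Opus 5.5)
Your Step 4 is a complete and correct proof, and it is essentially the argument behind \cite[Lem.~2.2(c)]{Riz-Sch-Val20}, which the paper simply cites (noting that $P$ fixes $1_K$). That argument combines the congruence $\sum_{\chi\in\Irr_0(B_0(K))}\chi(1)^2\equiv 0\pmod p$, counting $P\times\langle\sigma\rangle$-orbits (with $\sigma$ viewed in $\Gal(\mathbb{Q}_{|K|}/\mathbb{Q})$, where it has $p$-power order), and $\chi(1)^2\equiv 1\pmod p$ for $p\le 3$, just as in the proof of Proposition~\ref{pro:noelia relative sigma} with $N=1$. Steps 1--3 play no role and should be dropped; in particular, the extendibility claim in Step 2 is not justified as written, since the reduction to $QK=G$ is vacuous.
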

\begin{proof}
This follows from \cite[Lem.~2.2(c)]{Riz-Sch-Val20} using that $P$ fixes $1_K\in\irrsigma(B_0(K))$.
\end{proof}

\subsection{Structure of groups with a cyclic Sylow $3$-subgroup}

The following result of Brauer (which relies on a result of Herzog in \cite{Her70}) will be essential in the final step of our reduction theorem for Theorem A. We restate it here for the reader's convenience.

\begin{thm}\label{Bra76} Let $G$ be a finite group, let $p=3$ and suppose that $G$ has cyclic Sylow $p$-subgroups. Then one of the following holds:
\begin{enumerate}
    \item $G$ is $p$-solvable and there exists $L\leq G$ with $|G:L|\in\{1,2\}$ such that $L$ has a normal $p$-complement.
    \item $G$ is not $p$-solvable and for every $N\lhd G$, either $N$ or $G/N$ is of order not divisible by $p$.
\end{enumerate}
\end{thm}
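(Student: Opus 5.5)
Theorem~\ref{Bra76} is a restatement of a result of Brauer \cite{Bra76}, which in turn relies on Herzog \cite{Her70}. The plan is to derive it from the structure theory of groups with cyclic Sylow $3$-subgroups rather than to reprove Brauer's classification. Let $P\in\Syl_3(G)$ be cyclic. I split according to whether $G$ is $3$-solvable.

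\emph{Case 1: $G$ is $3$-solvable.} Put $\bar G=G/\oh{3'}G$. Since $P$ is cyclic, the $3$-length of $G$ is at most one. Hence $\oh3{\bar G}$ is a cyclic Sylow $3$-subgroup $\bar P$ of $\bar G$, and by the Hall--Higman lemma $\cent{\bar G}{\bar P}\le \bar P$. It follows that $\bar G/\bar P$ embeds in $\Aut(\bar P)$. For a cyclic $3$-group, $\Aut(\bar P)\cong C_2\times C_{3^{a-1}}$. Its $3'$-part has order at most $2$, and $\bar G/\bar P$ is a $3'$-group, so $|\bar G/\bar P|\le 2$. Let $L$ be the preimage of $\bar P$ in $G$. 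Then $|G:L|\in\{1,2\}$ and $L/\oh{3'}G$ is a $3$-group, so $\oh{3'}G$ is a normal $3$-complement of $L$. This gives (i).

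\emph{Case 2: $G$ is not $3$-solvable.} Suppose $N\normal G$ with $3$ dividing both $|N|$ and $|G:N|$. Because $P$ is cyclic, $P\cap N$ is the unique subgroup of $P$ of its order and $PN/N$ is cyclic. Exactly one of $N$ and $G/N$ fails to be $3$-solvable, since otherwise $G$ would be $3$-solvable. I would first pass to a chief factor that is a direct product of nonabelian simple groups of order divisible by $3$. A cyclic Sylow subgroup forces this factor to be a single simple group $S$, because a Sylow $3$-subgroup of $S^k$ with $k\ge2$ is not cyclic. I would then use that a simple group with cyclic Sylow $3$-subgroup has $3$-part of its outer automorphism group compatible only with trivial $3$-extensions on top (Herzog's result). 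This should force the $3$-part of $G$ to lie entirely inside $S$ or entirely outside it. For example, if $3$ divides $|G:N|$ and $N\supseteq S$, then $P$ must meet $S$ nontrivially and also map nontrivially to $G/N$. A cyclic $P$ with $P\cap S\ne1$ then yields elements of order $3$ acting on $S$ with $\cent{G}{S}$ involved. Herzog's classification shows this situation is impossible.

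The main obstacle is Case 2. This is exactly the deep content of \cite{Bra76} and \cite{Her70}, which depend on the classification of simple groups with cyclic Sylow $3$-subgroups and on the structure of their automorphism groups. I would therefore cite Brauer directly for (ii), as the paper does, rather than reconstructing it.
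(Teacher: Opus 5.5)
Your proposal is correct if read as your own proof of (i) plus a citation of Brauer for (ii). For (i) you take a different route from the paper.

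\textbf{Comparison for (i).} The paper uses Brauer's notion of \emph{metacyclic type}: a normal subgroup of index $3^ar$, where $|P|=3^a$ and $r=|\norm GP:P\cent GP|$ divides $2$. It cites \cite[Thm.~2C]{Bra76} to identify metacyclic type with $3$-solvability, uses Burnside's theorem when $r=1$, and uses \cite[Prop.~2F]{Bra76} when $r=2$. You prove (i) directly:
\begin{enumerate}
\item abelian Sylow $3$-subgroups force $3$-length at most one;
\item Hall--Higman gives $\cent{\bar G}{\bar P}=\bar P$ in $\bar G=G/\oh{3'}G$;
\item the $3'$-part of $\Aut(\bar P)$ has order at most $2$, so the preimage $L$ of $\bar P$ works (in fact $|G:L|=r$).
\end{enumerate}
This is self-contained and more transparent than the citation.

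\textbf{Citation for (ii).} Like the paper, you cite Brauer for (ii). To invoke \cite[Thm.~3C]{Bra76} you must also note that a non-$3$-solvable $G$ is not of metacyclic type. This is elementary: if $K\normal G$ has index $3^ar$ with $r\le 2$, then $K$ is a $3'$-group and $G/K$ has a normal Sylow $3$-subgroup, so $G$ is $3$-solvable.

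\textbf{Your Case 2 sketch.} The heuristic sketch is not an argument and should be dropped.
\begin{enumerate}
\item ``Exactly one of $N$, $G/N$ is not $3$-solvable'' is justified only as ``at least one''.
\item The mechanism you indicate (automorphisms of $S$, Herzog) only addresses $3$-elements lying above a simple chief factor. It says nothing about a $3$-solvable normal subgroup of order divisible by $3$ lying under a non-$3$-solvable quotient.
\item ``Herzog's classification shows this is impossible'' proves nothing.
\end{enumerate}

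\textbf{(ii) is elementary.} Your belief that (ii) is deep is mistaken; it needs neither Herzog nor any classification.

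\emph{Step 1.} If $N\normal G$ and $P\not\le N$, then $N$ is $3$-nilpotent. Suppose not. By Burnside, some element of $\norm N{P\cap N}$ acts nontrivially on $P\cap N$. Put $H=NP$ and $Y=\norm H{P\cap N}$. Since $P\cap N$ is characteristic in $P$, we have $\norm HP\le Y$. The Frattini argument $Y=\cent Y{P\cap N}\norm YP$ then yields $n\in\norm HP$ inducing a nontrivial $3'$-automorphism of the cyclic group $P$. By coprime action on the indecomposable group $P$, $[P,\norm HP]=P$. The focal subgroup theorem gives $P\le H'$, and $H'\le N$ because $H/N$ is abelian. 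So $P\le N$, a contradiction.

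\emph{Step 2.} Suppose $3$ divides both $|N|$ and $|G:N|$. By Step 1, $N$ is $3$-nilpotent. In $\bar G=G/\oh{3'}N$, the subgroup $\bar N\cong P\cap N\neq 1$ is cyclic and normal, so $\bar G/\cent{\bar G}{\bar N}$ is abelian. In $C=\cent{\bar G}{\bar N}$, a Sylow $3$-subgroup $Q\ge\bar N$ is cyclic. The group $\norm CQ$ induces $3'$-automorphisms of $Q$ fixing $\Omega_1(Q)\le\bar N$ pointwise, so these automorphisms are trivial. Burnside then makes $C$ $3$-nilpotent, and $G$ would be $3$-solvable, a contradiction.
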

\begin{proof}
    We use the notation of \cite[Sec.~2]{Bra76}. Let $P$ be a Sylow 3-subgroup of $G$ and let $r=|\norm G P:P\cent G P|$. Since $R=\norm G P/P\cent G P\leq{\rm Aut}(P)$, we have that $r=|R|$ is not divisible by $p$ and since $P$ is cyclic, we have that $r\mid p-1=2$.  
    
    As in \cite[p. 562]{Bra68}, we say that a group with cyclic Sylow subgroups is of metacyclic type if there exists $K\lhd G$ of index $p^ar$, where $|P|=p^a$.
    
    Now, if $G$ is of metacyclic type, then $G$ is $p$-solvable by \cite[Thm.~2C]{Bra76}, and we are in case (i). If $r=1$, we have that $G$ has a normal $p$-complement, as wanted. If $r=2$, the result follows by \cite[Prop.~2F]{Bra76}.

    On the other hand, if $G$ is not of metacyclic type, then  it is not $p$-solvable (again by \cite[Thm.~2C]{Bra76}), and we are in case (ii). The result now follows by \cite[Thm.~3C]{Bra76}.
\end{proof}

\section{Theorem \ref{thm:B}}\label{sec:thm B}

The purpose of this section is to prove Theorem \ref{thm:B}. We start with the following refinement of \cite[Lem.~9.3]{Nav18}.

\begin{lem}\label{lema9.3withtau} Let $\chi\in{\rm Irr}_{p',\tau}(G)$, where $\tau\in\Gal(\QQ^{\mathrm{ab}}/\QQ)$ has $p$-power order, and let $N$ be a normal subgroup of $G$. Then $\chi_N$ has a $P\times\langle\tau\rangle$ invariant constituent, and any two of them are $\norm G P$-conjugate.    
\end{lem}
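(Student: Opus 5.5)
We adapt the proof of \cite[Lem.~9.3]{Nav18}, replacing the $p$-group $P$ acting on the constituents of $\chi_N$ by the group $P\times\langle\tau\rangle$. Here $P$ acts by conjugation and $\tau$ acts via Galois action; these actions commute because $(\theta^{x})^\tau=(\theta^\tau)^x$. Throughout, let $\Omega$ be the set of irreducible constituents of $\chi_N$. By Clifford's theorem, $\Omega$ is a single $G$-orbit and $\chi_N=e\sum_{\theta\in\Omega}\theta$.

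\textbf{Step 1: $\Omega$ is stable under $G\times\langle\tau\rangle$.} Since $\chi^\tau=\chi$, we have $(\chi_N)^\tau=\chi_N$, so $\tau$ permutes $\Omega$. Conjugation by $G$ also permutes $\Omega$. Hence $\Omega$ is acted on by $G\times\langle\tau\rangle$, and in particular by the $p$-group $P\times\langle\tau\rangle$. The group $P\times\langle\tau\rangle$ is a $p$-group because $\tau$ has $p$-power order.

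\textbf{Step 2: existence.} We have $|\Omega|=|G:G_\theta|$, which divides $\chi(1)/\theta(1)$ (or use $\chi(1)=e|\Omega|\theta(1)$). Since $\chi(1)$ is prime to $p$, so is $|\Omega|$. Every orbit of the $p$-group $P\times\langle\tau\rangle$ on $\Omega$ has $p$-power size. Since $|\Omega|$ is prime to $p$, some orbit has size $1$, which gives a $P\times\langle\tau\rangle$-invariant constituent.

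\textbf{Step 3: uniqueness up to $\norm G P$-conjugacy.} Let $\theta,\theta'\in\Omega$ both be $P\times\langle\tau\rangle$-invariant. Write $\theta'=\theta^g$ with $g\in G$. Since $P\le G_{\theta'}=G_\theta^g$, both $P$ and $P^{g^{-1}}$ lie in $G_\theta$.

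It remains to handle $\tau$. Because $\theta^\tau=\theta$, the group $\tau$ fixes the $G$-orbit structure, but we do not need any extra condition involving $\tau$ in the conjugation. The original argument uses only $P$: both $P$ and $P^{g^{-1}}$ are Sylow $p$-subgroups of $G_\theta$, since $|G:G_\theta|$ is prime to $p$. So there is $h\in G_\theta$ with $P^{g^{-1}}=P^h$. Then $n=hg$ normalizes $P$, and $\theta^{n}=\theta^{hg}=\theta^g=\theta'$.

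\textbf{Main point to check.} The only delicate issue is that the invariance under $\tau$ does not interfere with the conjugacy statement. It does not, since the conjugating element is produced purely from Sylow's theorem in $G_\theta$, and the set of $P\times\langle\tau\rangle$-invariant constituents is automatically closed under $\norm G P$ by commutation of the actions. Thus the main obstacle is essentially notational; the substance is the coprimality of $|\Omega|$ with $p$ combined with $\tau$ having $p$-power order.
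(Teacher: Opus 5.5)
Your argument is correct and is essentially the paper's: both rest on a $p$-group acting, compatibly with $G$-conjugation, on the $p'$-sized set of constituents of $\chi_N$, and on the Sylow argument underlying \cite[Lem.~9.3]{Nav18}. The paper packages it slightly differently. It first finds one $\tau$-fixed constituent, notes that commutation then forces every constituent of $\chi_N$ to be $\tau$-fixed (so the $\tau$-condition is vacuous), and quotes \cite[Lem.~9.3]{Nav18} directly, whereas you let $P\times\langle\tau\rangle$ act at once and re-prove the conjugacy part inside $G_\theta$.
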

\begin{proof}
    Let $\mu\in{\rm Irr}(N)$ lying under $\chi$. Since $|G:G_\mu|$ is not divisible by $p$ (by the Clifford correspondence and using that $p$ does not divide $\chi(1)$) and $\langle \tau\rangle$ is a $p$-group, we have that there is at least one $\langle\tau\rangle$-invariant $\theta\in{\rm Irr}(N)$ lying under $\chi$. But then for $g\in G$, we have $(\theta^g)^\tau=(\theta^\tau)^g=\theta^g$ and we have that all of the irreducible constituents of $\chi_N$ are $\langle\tau\rangle$-invariant. Now, by \cite[Lem.~9.3]{Nav18}, we conclude that $\chi_N$ has a $P\times\langle\tau\rangle$-invariant constituent and any two of them are $\norm G P$-conjugate. 
\end{proof}

We will also need the following result of Murai. For the convenience of the reader, we restate the theorem in our notation. Recall that if $B$ is a Brauer $p$-block of $G$ and $\lambda$ is a linear character of $G$, then the set $\lambda B=\{\lambda\chi\>|\> \chi\in{\rm Irr}(B)\}$ is a Brauer $p$-block of $G$ (see \cite[Lem.~2.1]{Riz18}, for instance).

\begin{lem}\label{lem:corollari2.5murai} Let $N$ be a normal subgroup of $G$, let $B$ be a $p$-block of $G$, let $\xi\in{\rm Irr}(G)$ be a linear character and let $\overline{B}$ be a $p$-block of $G/N$. Let $b$ and $\overline{b}$ be the Brauer correspondents of $B$ and $\overline{B}$, respectively. Then $\xi\overline{B}$ is dominated by $B$ if, and only if, $\xi_{N_G(P)}\overline{b}$ is dominated by $b$.
\end{lem}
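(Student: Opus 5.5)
The plan is to reduce the twisted statement to the untwisted one, namely the classical compatibility between domination of blocks and the Brauer correspondence, by realising multiplication by $\xi$ as an algebra automorphism. Throughout I read the statement, as the use of $\norm G P$ indicates, for blocks of maximal defect. Thus $B$ has defect group $P$, $\overline B$ has defect group $PN/N$, and $b$, $\overline b$ are blocks of $\norm G P$ and of $\norm{G/N}{PN/N}$ respectively. The latter group contains $\norm G P N/N$, and Frattini-type arguments identify the relevant Brauer correspondents there.

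Work over a splitting $p$-modular field $F$. The map $\mu_\xi\colon g\mapsto \xi(g)g$ extends to an algebra automorphism of $FG$ that preserves $\bfZ(FG)$. It sends the block idempotent $e_B$ to the block idempotent of $\bar\xi^{-1}B$ or $\xi B$, depending on the convention; I would fix signs by checking the effect on central characters, $\omega_{\xi\chi}(\hat K)=\xi(g_K)\omega_\chi(\hat K)$. The same formula defines compatible automorphisms of $F[G/N]$ (using that $\xi\overline B$ is a block of $G/N$, so $\xi$ is viewed on $G/N$ via the displayed hypothesis), of $F\norm G P$ with $\xi_{\norm G P}$, and so on.

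The key steps, in order, are as follows. First, express domination ring-theoretically: $B$ dominates $\overline B'$ if and only if $\pi(e_B)e_{\overline B'}\neq 0$, where $\pi\colon FG\to F[G/N]$ is the natural projection. Second, observe that $\mu_\xi$ commutes with $\pi$ and with the Brauer homomorphisms $\mathrm{Br}_P\colon \bfZ(FG)\to \bfZ(F\norm G P)$ and $\mathrm{Br}_{PN/N}$. This holds because $\xi$ is a class function that is constant on classes and restricts compatibly. Third, invoke the untwisted statement, that $B$ dominates $\overline B$ if and only if $b$ dominates $\overline b$. This is the known compatibility of Brauer correspondence with quotients for blocks of maximal defect (Navarro's book, around Thm.~9.9 and the theory of $\lambda_B=\lambda_b\circ\mathrm{Br}_P$), applied to the pair $(B,\xi^{-1}\cdot\,)$. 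Finally, use that $\mu_\xi$ maps the Brauer correspondent of $\overline B$ to the Brauer correspondent of $\xi\overline B$, namely $\xi_{\norm G P}\overline b$, since $\lambda_{\xi\overline B}=\lambda_{\xi\overline b}\circ \mathrm{Br}$ by the commutation in the second step. The equivalence then follows by transport of structure.

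I expect the main obstacle to be the third step, together with the compatibility in the second step of $\pi$ with the Brauer maps. Here $\bfC_{G/N}(PN/N)$ can be strictly larger than $\cent G P N/N$, so $\mathrm{Br}_{PN/N}\circ\pi$ and $\pi\circ\mathrm{Br}_P$ are not obviously equal on all of $\bfZ(FG)$. To handle this I would work with central characters $\lambda_B$ rather than with idempotents. It suffices to evaluate on class sums of defect group $P$, which are supported on $\cent G P$-classes, where the images do agree modulo $p$, and to use that maximal-defect central characters are determined by their values on such classes. If this becomes delicate, I would fall back on citing Murai's original theorem directly for the untwisted case and only carry out the $\mu_\xi$ transport argument, which is routine.
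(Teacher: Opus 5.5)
\textbf{There is a genuine gap: you apply the twist on the quotient $G/N$, where it is not available in general.}

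Your second step needs $\mu_\xi$ to act on $F[G/N]$ and to commute with $\pi$. That requires $\xi(gn)^*=\xi(g)^*$ for $n\in N$. Your last step needs ``the Brauer correspondent of $\xi\overline{B}$'', which requires $\xi\overline{B}$ to be a block of $G/N$. You get both by reading $\xi$ as a character of $G/N$.

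The lemma, however, only assumes that $\xi\in\Irr(G)$ is linear. In the proof of Theorem~\ref{thm:ppalblock gam} it is applied with $\xi=\tilde\theta$, an extension of a typically nontrivial $\theta\in\Irr(N)$, to decide when $\tilde\theta\overline{B}\subseteq\Irr(B_0(G_\theta))$. In that case:
\begin{itemize}
\item $\xi\overline{B}=\{\xi\chi\mid \chi\in\Irr(\overline{B})\}$ is a set of characters of $G$ that do not contain $N$ in their kernels;
\item ``dominated by $B$'' means $\xi\overline{B}\subseteq\Irr(B)$, i.e.\ $\overline{B}$ is dominated by the block $\xi^{-1}B$;
\item there is no $\mu_\xi$ on $F[G/N]$ and no block ``$\xi\overline{B}$'' of $G/N$ to take a Brauer correspondent of.
\end{itemize}
So, as written, your argument proves only the case $N\subseteq\ker\xi$.

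For the application, where $N$ is a $p$-group, you could rescue it. Write $\xi=\xi_p\xi_{p'}$. Multiplying by $\xi_p$ fixes every block, and $\xi_{p'}$ is trivial on $N$, so you may replace $\xi$ by $\xi_{p'}$. But this has to be said, and it does not cover general $N$.

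\textbf{The fix, which is the paper's proof, is to twist on $G$ and $H=\mathbf{N}_G(P)$ instead.}
\begin{itemize}
\item $\xi\overline{B}$ is dominated by $B$ if and only if $\overline{B}$ is dominated by $\xi^{-1}B$.
\item Likewise, $\xi_H\overline{b}$ is dominated by $b$ if and only if $\overline{b}$ is dominated by $\xi_H^{-1}b$.
\item By Murai's untwisted Cor.~2.5, applied to $\xi^{-1}B$ and $\overline{B}$, it then suffices that $\xi_H^{-1}b$ (still of defect group $P$) induces $\xi^{-1}B$.
\end{itemize}
That last point is a one-line computation. Since $\xi$ is constant on a class $K$ of $G$, with value $\xi(x)$ say,
$\lambda_{\xi_H^{-1}b}(\widehat{K\cap H})=\overline{\xi(x)}^{\,*}\lambda_b(\widehat{K\cap H})=\overline{\xi(x)}^{\,*}\lambda_B(\hat K)=\lambda_{\xi^{-1}B}(\hat K)$.
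This is exactly your observation that $\mu_\xi$ commutes with $\mathrm{Br}_P$, used on $FG$ rather than on $F[G/N]$. Nothing ever has to act on the quotient.

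\textbf{On the untwisted step.} Your sketch that the images ``agree modulo $p$'' on $\mathbf{C}_G(P)$-classes is not an argument as it stands. The mismatch you flag between $\mathbf{C}_{G/N}(PN/N)$ and $\mathbf{C}_G(P)N/N$ is the real content of that step. Cite Murai's result, as your fallback suggests and as the paper does.
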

\begin{proof}
   This is, essentially, \cite[Cor.~2.5]{Mur98}. Write $H=\norm G P$. To apply this result, we just need to check that $\xi^{-1}B$ and $\xi_H^{-1}b$ are Brauer correspondents. But this easily follows from the definition of induced blocks (see \cite[p.~87]{Nav98}). Let $K$ be a conjugacy class of $G$; we claim that $\lambda_{\xi_H^{-1}b}^G(\hat{K})=\lambda_{\xi^{-1} B}(\hat{K}).$  Let $L_1,\ldots, L_r$ be the conjugacy classes of $H$ such that $K\cap H=L_1\cup\ldots\cup L_r$, let $x_i\in L_i$, let $\psi\in{\rm Irr}(b)$ and let $\chi\in{\rm Irr}(B)$. Then
   
   \begin{align*}
       \lambda_{\xi_H^{-1}b}^G(\hat{K})&=\lambda_{\xi_H^{-1}b}\left(\sum_{x\in K\cap H}x\right)=\lambda_{\xi_H^{-1}b}(\hat{L_1}+\ldots +\hat{L_r})\\
       &=\omega_{\xi_H^{-1}b}(\hat{L_1})^*+\ldots + \omega_{\xi_H^{-1}b}(\hat{L_r})^*=\left(\sum_{i=1}^r\frac{|L_i|\overline{\xi_H(x_i)}\psi(x_i)}{\psi(1)}\right)^*
   \end{align*}
Since $\xi$ is a class funcion of $G$, we have that $\xi(x_i)=\xi(x_j)$ for all $i,j$ and then
\begin{align*}
       \lambda_{\xi_H^{-1}b}^G(\hat{K})&=\overline{\xi(x_1)}^*\left(\sum_{i=1}^r\frac{|L_i|\psi(x_i)}{\psi(1)}\right)^*=\overline{\xi(x_1)}^*\lambda_b(\hat{L_1}+\ldots + \hat{L_r})\\
&=\overline{\xi(x_1)}^*\lambda_b(\hat{K\cap H})=\overline{\xi(x_1)}^*\lambda_B(K)=\left(\frac{|K|\xi^{-1}(x_1)\chi(x_1)}{\chi(1)}\right)^*=\lambda_{\xi^{-1}B}(\hat{K}),
   \end{align*}
   proving the claim. 
\end{proof}

The following is Theorem \ref{thm:B}. Recall that $\mathcal{H}$ is the Galois group considered in \cite{Nav04}. 

\begin{thm}\label{thm:ppalblock gam}
    Let $N\normal G$ with $N$ an abelian $p$-group, and assume $G/N$ has cyclic Sylow $p$-subgroups. Let $\tau\in\mathcal{H}$ have $p$-power order. Then $|\irrtau(B_0(G))|=|\irrtau(B_0(\norm G P))|,$ where $P\in\Syl_p(G)$.
\end{thm}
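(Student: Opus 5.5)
Here $\irrtau$ is the height-zero analogue of $\irrsigma$, namely the $\tau$-invariant characters in $\Irr_0$. The plan is to count $\irrtau(B_0(G))$ by Clifford theory over the abelian normal $p$-subgroup $N$, and to match the count with the analogous one for $\norm G P$. Note that $N\leq P$, and $N\normal \norm G P$. By Lemma \ref{lema9.3withtau}, every $\chi\in\irrtau(B_0(G))$ lies over a $P\times\langle\tau\rangle$-invariant $\lambda\in\Irr(N)=\Lin(N)$, and $\lambda$ is unique up to $\norm G P$-conjugacy. Hence
\[|\irrtau(B_0(G))|=\sum_{\lambda}|\irrtau(B_0(G)\,|\,\lambda)|,\]
where $\lambda$ runs over representatives of the $\norm G P$-orbits of $P\times\langle\tau\rangle$-invariant linear characters of $N$. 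The same decomposition holds for $\norm G P$. Since $\norm{G_\lambda}P=\norm GP_\lambda$ and $P\leq G_\lambda$, the Clifford correspondence together with the Fong--Reynolds-type behaviour of principal blocks over a normal $p$-subgroup (every block of $G_\lambda$ covering $\lambda$ and inducing to $B_0(G)$) reduces us to proving, for each fixed $\lambda$, that
\[|\irrtau(B_0(G_\lambda)\,|\,\lambda)|=|\irrtau(B_0(\norm {G_\lambda}P)\,|\,\lambda)|.\]
Here one checks that induction preserves $\tau$-invariance (because $\lambda$ is $\tau$-invariant) and $p'$-degree (because $p\nmid |G:G_\lambda|$). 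So we may assume that $\lambda$ is $G$-invariant.

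Next we extend $\lambda$. Since $\lambda$ is $P$-invariant and $N$ is abelian, $\lambda$ extends to $P$. Because $\tau$ has $p$-power order and $\lambda$ is $\tau$-invariant, a counting argument over the extensions (a $\langle\tau\rangle$-orbit of the $\Lin(P/N)$-torsor of extensions) produces a $\tau$-invariant extension to $P$. Lemma \ref{lem:linear sigma-invariant} then gives an extension $\hat\lambda\in B_0(G)$ of $\lambda$ that is $\langle\tau\rangle$-invariant and, in its proof, linear-like (a power of a determinant). Gallagher's theorem then gives
\[\Irr(G|\lambda)=\{\hat\lambda\beta : \beta\in\Irr(G/N)\},\]
with $\hat\lambda\beta$ $\tau$-invariant iff $\beta$ is, and of $p'$-degree iff $\beta$ is. The block condition is handled by Lemma \ref{lem:corollari2.5murai}, applied with $\xi=\hat\lambda$ when $\hat\lambda$ is linear (otherwise we first reduce to a linear extension via the determinant trick). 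It says that $\hat\lambda\bar B$ lies in $B_0(G)$ iff $\hat\lambda_{\norm GP}\bar b$ lies in $B_0(\norm GP)$, where $\bar B$ ranges over blocks of $G/N$ and $\bar b$ is its Brauer correspondent in $\norm GP/N=\norm{G/N}{P/N}$. Thus the count on each side becomes a sum, over matched Brauer-correspondent blocks $\bar B\leftrightarrow \bar b$ of maximal defect of $G/N$ (of $p'$-degree type), of the number of $\tau$-invariant height-zero characters. One also has to take care of the twist by $\hat\lambda$ under $\tau$: since $\hat\lambda^\tau=\hat\lambda$, the characters $\beta$ and $\beta^\tau$ are matched consistently.

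It remains to show that each block $\bar B$ of maximal defect of $G/N$ and its Brauer correspondent $\bar b$ have the same number of $\tau$-invariant height-zero characters, which is Conjecture B of \cite{Nav04} for blocks with cyclic defect groups. For cyclic defect this follows from Dade's theory of blocks with cyclic defect: the Brauer tree with its real stem and exceptional characters, combined with the known compatibility of $\mathcal H$-action with the Dade bijection (as established by Navarro for cyclic defect). This is exactly why $\tau\in\mathcal H$ is required, and it matches Remark \ref{rem:thm B}.

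I expect the main obstacle to be the middle step: making the Gallagher and Murai correspondence simultaneously respect blocks, heights and $\tau$. In particular, I need a $\tau$-invariant extension $\hat\lambda$ that is genuinely linear, or enough of a substitute, to apply Lemma \ref{lem:corollari2.5murai}. If $\lambda$ does not extend linearly to $G$, I would instead pass to a character triple isomorphism with a central-extension setting in which $\lambda$ becomes linear on a central $p$-subgroup and repeat the argument there, checking that the Galois action is preserved.
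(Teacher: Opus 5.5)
Your architecture matches the paper's: the decomposition via Lemma \ref{lema9.3withtau}, the Clifford reduction to $G_\lambda$, Gallagher's theorem with a $\tau$-invariant extension lying in the principal block, the block matching via Lemma \ref{lem:corollari2.5murai}, and Navarro's cyclic-defect result \cite[Thm.~3.4]{Nav04} for the maximal-defect blocks of $G_\lambda/N$. The gap is in the extension step. The extensions of $\lambda$ to $P$ form a $\Lin(P/N)$-torsor of size $|P/N|$, which is a power of $p$. A $p$-group acting on a set of $p$-power size need not fix any point, so no counting argument produces a $\tau$-invariant extension, and in fact one need not exist. For example, take $p=3$, $G=P=C_9$, $N=C_3$, $\lambda$ faithful on $N$, and $\tau=\sigma$. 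Then $\lambda^\sigma=\lambda^4=\lambda$. But every extension $\mu$ has order $9$ and $\mu^\sigma=\mu^4\neq\mu$, so $\sigma$ permutes the three extensions cyclically. For such $\lambda$ you cannot invoke Lemma \ref{lem:linear sigma-invariant}, and your argument says nothing about them.

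What is missing is the observation that such $\lambda$ contribute nothing to either side. Suppose $\psi\in\irrtau(B_0(G_\lambda)|\lambda)$. Then $\psi_N=\psi(1)\lambda$. Nonlinear irreducible characters of $P$ have degree divisible by $p$, so the linear constituents of $\psi_P$, counted with multiplicity, number $\equiv\psi(1)\not\equiv 0 \pmod p$. The $p$-group $\langle\tau\rangle$ permutes these constituents preserving multiplicities, so one of them is $\tau$-invariant, and that constituent extends $\lambda$. The paper quotes \cite[Lem.~2.1(ii)]{Nav-Tie19} for this. The same argument works with $\norm{G_\lambda}P$ in place of $G_\lambda$. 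Hence if $\lambda$ has no $\tau$-invariant extension to $P$, both $\irrtau(B_0(G_\lambda)|\lambda)$ and $\irrtau(B_0(\norm{G_\lambda}P)|\lambda)$ are empty. Otherwise Lemma \ref{lem:linear sigma-invariant} produces your $\hat\lambda$, and the rest of your argument goes through.

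Two smaller corrections. First, $\lambda$ extends to $P$ because $P/N$ is cyclic, not because $N$ is abelian: the nontrivial character of $\zent{Q_8}$ is $Q_8$-invariant but does not extend to $Q_8$. Second, your worry about linearity is unfounded, since any extension of a linear character is linear. So the character-triple fallback is unnecessary; the only real issue was $\tau$-invariance.
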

\begin{proof}
Let $\Delta$ be a $\norm G P$-transversal on the set of $P\times \langle\tau\rangle$-invariant elements in $\Irr(N)$. By Lemma \ref{lema9.3withtau}, it follows that
\[\irrtau(B_0(G))=\coprod_{\theta\in\Delta}\irrtau(B_0(G)|\theta),\] and that

\[\irrtau(B_0(\norm G P))=\coprod_{\theta\in\Delta}\irrtau(B_0(\norm G P)|\theta).\]
Hence, it is enough to show that $|\irrtau(B_0(G)|\theta)|=|\irrtau(B_0(\norm G P)|\theta)|$ for every $\theta\in \Delta$. Now, let $\theta\in\Delta$. We claim that the map
$$\iota_G^\theta:\irrtau(B_0(G_\theta)|\theta)\rightarrow\irrtau(B_0(G)|\theta)$$
defined by $\psi\mapsto\psi^G$
 is a bijection. Indeed, since $\theta$ is $\langle \tau \rangle$-invariant, it follows that $\langle \tau\rangle$ acts on $\Irr(G_\theta|\theta)$ and on $\Irr(G|\theta)$, and the Clifford correspondence implies that $\psi\mapsto\psi^G$ is a bijection $\Irr(G_\theta|\theta)\rightarrow\Irr(G|\theta)$. Since the action of $\langle \tau \rangle$ commutes with character induction, it follows that $\tau$ fixes $\psi\in\Irr(G_\theta|\theta)$ if, and only if it fixes $\psi^G\in\Irr(G|\theta)$. Now \cite[Cor.~6.2 and Thm.~6.7]{Nav98} imply the claim. Analogously, since $\norm{G_\theta}{P}=\norm G P_\theta$, the map
$$
\iota_{\norm G P}^\theta:\irrtau(B_0(\norm{G_\theta}{P})|\theta)\rightarrow\irrtau(B_0(\norm G P)|\theta)
$$ defined by $\eta\mapsto\eta^{\norm G P}$
is also a bijection.

Now, we claim that $\theta$ extends to a $\langle \tau\rangle$-invariant character in $\Irr(P)$ if, and only if $\irrtau(B_0(G)|\theta)$ is nonempty. Indeed, let $\chi\in\irrtau(B_0(G)|\theta)$ and let $\psi\in\irrtau(B_0(G_\theta)|\theta)$ be its Clifford correspondent. Then $\psi_P$ is a $\langle \tau \rangle$-invariant character of $P$, and by \cite[Lem.~2.1(ii)]{Nav-Tie19} we have that $\psi_P$ contains a $\langle \tau \rangle$-invariant linear constituent $\lambda\in\Irr(P)$. Since $\psi_N=\psi(1)\theta$, $\lambda$ is an extension of $\theta$. Conversely, if $\theta$ extends to a $\langle \tau \rangle$-invariant character in $\Irr(P)$ then by Lemma \ref{lem:linear sigma-invariant}, $\theta$ extends to a character $\psi\in\irrtau(B_0(G_\theta))$ and $\psi^G\in\irrtau(B_0(G)|\theta)$. In any case, we conclude that $\irrtau(B_0(G)|\theta)\neq\emptyset$ if, and only if  $\theta$ extends to a character $\tilde\theta\in\irrtau(B_0(G_\theta))$ if, and only if, $\theta$ extends to a $\langle \tau\rangle$-invariant character in $\Irr(P)$. Analogously, $\irrtau(B_0(\norm G P)|\theta)\neq\emptyset$ if, and only if  $\theta$ extends to a character $\tilde\theta\in\irrtau(B_0(\norm{G_\theta}P))$ if, and only if, $\theta$ extends to a $\langle \tau\rangle$-invariant character in $\Irr(P)$. In particular, $\irrtau(B_0(\norm G P)|\theta)\neq\emptyset$ if, and only if, $\irrtau(B_0(G)|\theta)\neq\emptyset$. Since we want to prove that $|\irrtau(B_0(G)|\theta)|=|\irrtau(B_0(\norm G P)|\theta)|$, we may assume that both are not empty and therefore, we can find $\tilde\theta\in{\rm Irr}_{0,\tau}(B_0(G_\theta))$   extending $\theta$. Notice that $\tilde\theta':=\tilde\theta_{\norm{G_\theta}P}$ is irreducible and extends $\theta$. Since $|G_\theta:\norm{G_\theta}P|$ is not divisible by $p$, we conclude by \cite[Lem.~2.1]{Lyo-Mar-Nav-Tie25} that $\tilde\theta'\in{\rm Irr}_{0,\tau}(B_0(\norm{G_\theta}P))$ extends $\theta$.

  By Gallagher's theorem, \[\Irr(G_\theta|\theta)=\{\mu\tilde\theta\mid\mu\in\Irr(G_\theta/N)\} \text{ and } \Irr(\norm{G_\theta}P|\theta)=\{\beta\tilde\theta'\mid\beta\in\Irr(\norm{G_\theta}P/N)\}.\] Notice that $\mu\tilde\theta$ is $\langle \tau\rangle$-invariant and has degree coprime to $p$ if and only if $\mu$ is $\langle\tau\rangle$-invariant and has degree coprime to $p$.
Further, \cite[Lem.~2.4]{Riz18} implies that there is a set $\mathcal{B}$ of blocks of $G_\theta/N$ of maximal defect such that 
\[\irrtau(B_0(G_\theta)|\theta)=\coprod_{B\in\mathcal{B}}\{\mu\tilde\theta\mid\mu\in\irrtau(B)\}.\]
If $\mathcal{B}'$ denotes the set of Brauer correspondent blocks in $\norm{G_\theta}P/N=\norm{G_\theta/N}{P/N}$ of the blocks in $\mathcal{B}$, then by Lemma \ref{lem:corollari2.5murai}, we have that
\[\irrtau(B_0(\norm{G_\theta}P)|\theta)=\coprod_{b\in\mathcal{B'}}\{\beta\tilde\theta'\mid\beta\in\irrtau(b)\}.\]

Since $G/N$ has cyclic Sylow $p$-subgroups, \cite[Thm.~3.4]{Nav04} implies that for each $B\in\mathcal{B}$, there is a bijection
$\Omega_B:\irrtau(B)\rightarrow\irrtau(b)$, where $b\in\mathcal{B'}$ is the Brauer correspondent block of $B$. 
It follows that
\begin{align*}
\Omega_\theta:\irrtau(B_0(G_\theta)|\theta)&\rightarrow\irrtau(B_0(\norm{G_\theta}P|\theta)\\
\mu\tilde\theta&\mapsto\Omega_{\bl(\mu)}(\mu)\tilde\theta'
\end{align*}
is a bijection, where $\bl(\mu)$ denotes the block of $G_\theta/N$ that contains $\mu$ (which belongs to $\mathcal{B}$).

Therefore, the map 
\[\Psi_\theta:=\iota_{\norm G P}^\theta\circ\Omega_\theta\circ(\iota_G^\theta)^{-1}:\irrtau(B_0(G)|\theta)\rightarrow\irrtau(\norm G P|\theta)\]
is a bijection, and we can construct a bijection
$\Psi:\irrtau(B_0(G))\rightarrow\irrtau(B_0(\norm G P))$
by setting $\Psi(\chi)=\Psi_\theta(\chi)$ if $\chi$ lies over $\theta\in\Irr_P(N)$. (Notice that the map is surjective because $\irrtau(B_0(G)|\theta)\neq\emptyset$ if, and only if $\irrtau(B_0(\norm G P|\theta)\neq\emptyset$, as proved above.) The result follows.
\end{proof}

\begin{rem}\label{rem:thm B}
    The hypothesis on the Sylow subgroups of $G/N$ being cyclic is only used in the second-to-last paragraph of the proof of Theorem \ref{thm:ppalblock gam}, to apply \cite[Thm. 3.4]{Nav04}. In general, we could assume only that, for every block $B$ of maximal defect of $G/N$ with Brauer correspondent $b$, there is a bijection
    $\Omega_B:\irrtau(B)\rightarrow\irrtau(b)$. Moreover, if one assumes $\Omega_B(\psi)(1)\leq \psi(1)$ for all $\psi\in\irrtau(B)$ and all blocks $B$ of maximal defect, then the proof of Theorem \ref{thm:ppalblock gam} gives a bijection $\Omega:\Irr_{0,\tau}(B_0(G))\rightarrow\Irr_{0,\tau}(B_0(\norm G P))$ with $\Omega(\chi)(1)\leq\chi(1)$. The existence of such bijections has been proposed recently in \cite{Gia25}.
\end{rem}

A similar proof gives the following result.

\begin{pro}\label{thm:ppalblock gam2}
    Let $N\normal G$ be perfect and let $P\in\Syl_p(G)$. Assume every $P$-invariant character $\theta\in\irrsigma(B_0(N))$ that extends to a $\sigma$-invariant character of $PN$ also extends to a $\sigma$-invariant character of $B_0(G_\theta)$ and assume $G/N$ has cyclic Sylow $p$-subgroups. Then $|\irrsigma(B_0(G))|=|\irrsigma(B_0(N\norm G P))|$.
\end{pro}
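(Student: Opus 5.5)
We follow the scheme of the proof of Theorem \ref{thm:ppalblock gam}, with $N\norm G P$ playing the role of $\norm G P$. Put $H:=N\norm G P$. By Lemma \ref{lema9.3withtau} (applied with $\tau=\sigma$ to both $G$ and $H$), every $\chi\in\irrsigma(B_0(G))$ lies over a $P\times\langle\sigma\rangle$-invariant $\theta\in\Irr(N)$, unique up to $\norm G P$-conjugacy, and the same holds for $H$. By Lemma \ref{lem:princblockabove}(iii), such $\theta$ lies in $B_0(N)$. Since $P\leq H$ and the $\norm GP$-orbits are the same whether we work in $G$ or in $H$, we may fix a common transversal $\Delta$. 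It then suffices to show
$$|\irrsigma(B_0(G)|\theta)|=|\irrsigma(B_0(H)|\theta)|$$
for each $\theta\in\Delta$. Note that $H_\theta=N\norm{G_\theta}P$ because $N\leq G_\theta$. Exactly as before, the Clifford correspondence together with \cite[Cor.~6.2, Thm.~6.7]{Nav98} gives $\sigma$-equivariant bijections $\irrsigma(B_0(G_\theta)|\theta)\to\irrsigma(B_0(G)|\theta)$ and $\irrsigma(B_0(H_\theta)|\theta)\to\irrsigma(B_0(H)|\theta)$, both given by induction. We therefore reduce to the stabilizers.

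Next we handle nonemptiness and extension. If $\irrsigma(B_0(G_\theta)|\theta)\ne\emptyset$, pick $\psi$ in it. Then $\psi_{PN}$ has a $\sigma$-invariant $p'$-degree constituent, and since $\theta$ is $P$-invariant this constituent must extend $\theta$. This uses a Navarro--Tiep type argument as in the earlier proof; $\theta$ extends to $PN$ because $\psi(1)/\theta(1)$ is a $p'$-number, and this is where some care is needed. Conversely, if $\theta$ extends to a $\sigma$-invariant character of $PN$, the hypothesis supplies a $\sigma$-invariant extension $\tilde\theta\in\Irr(B_0(G_\theta))$, which has $p'$-degree since $\theta(1)$ is $p'$. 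Restricting, $\tilde\theta':=\tilde\theta_{H_\theta}$ extends $\theta$ and lies in $B_0(H_\theta)$ by \cite[Lem.~2.1]{Lyo-Mar-Nav-Tie25}, since $p\nmid|G_\theta:H_\theta|$. The same criterion applies to $H$ because $PN\leq H_\theta$. Hence the two sets are simultaneously empty or nonempty, and in the nonempty case we have compatible extensions $\tilde\theta$ and $\tilde\theta'$.

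Gallagher's theorem now gives $\Irr(G_\theta|\theta)=\{\mu\tilde\theta\}$ with $\mu\in\Irr(G_\theta/N)$. Since $\tilde\theta$ is $\sigma$-invariant, $\mu\tilde\theta$ is $\sigma$-invariant of $p'$-degree iff $\mu$ is. By \cite[Lem.~2.4]{Riz18}, the members lying in $B_0(G_\theta)$ correspond to $\mu$ running over a set $\mathcal{B}$ of maximal-defect blocks of $G_\theta/N$. Likewise, those in $B_0(H_\theta)$ correspond to blocks of $H_\theta/N=\norm{G_\theta/N}{PN/N}$, and Lemma \ref{lem:corollari2.5murai} identifies these with the Brauer correspondents $\mathcal{B}'$. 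Since $G_\theta/N$ has cyclic Sylow $p$-subgroups, \cite[Thm.~3.4]{Nav04} yields bijections $\irrsigma(B)\to\irrsigma(b)$ for each $B\in\mathcal{B}$. Composing these gives the required equality, and summing over $\Delta$ finishes the proof.

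The main obstacle is the Murai step, namely matching $\mathcal{B}$ with $\mathcal{B}'$ via the block of $\tilde\theta$ and $\tilde\theta'$. Lemma \ref{lem:corollari2.5murai} was stated for twisting by a linear character, whereas here $\tilde\theta$ need not be linear. The first attempt would be to use the Dade/Murai domination theory for covering blocks with a fixed extension and the compatibility $\tilde\theta'=\tilde\theta_{H_\theta}$. The perfectness of $N$ is presumably what guarantees uniqueness of the extension with determinantal order conditions. It would thereby make $\mathcal{B}$ correspond exactly to $\mathcal{B}'$ under Brauer correspondence, and this is where we expect the real work to lie.
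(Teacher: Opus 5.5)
Your outline is the paper's: it also decomposes over a $\norm GP$-transversal of $P\times\langle\sigma\rangle$-invariant $\theta$, passes to $G_\theta$ by Clifford theory, and reruns Theorem~\ref{thm:ppalblock gam} from its fourth paragraph with $N\norm GP$ in place of $\norm GP$. But your proposal does not close.

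\textbf{The gap: matching $\mathcal{B}$ with $\mathcal{B}'$.} You invoke Lemma~\ref{lem:corollari2.5murai} for this step, and then concede in your last paragraph that it does not apply. That lemma needs a linear twist, and $\tilde\theta$ is nonlinear whenever $\theta\neq 1_N$, since $N$ is perfect. You leave the step open and guess that perfectness rescues it; it does not. The paper compresses this point into ``follow the proof of Theorem~\ref{thm:ppalblock gam}'', but the following argument makes that legitimate.

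Let $\mu\in\Irr_{p'}(G_\theta/N)$ and let $x$ lie in a class $K$ of $G_\theta$. Since $p\nmid\tilde\theta(1)$, the quotient $\tilde\theta(x)/\tilde\theta(1)$ is a $p$-local integer, and $\omega_{\mu\tilde\theta}(\hat K)^*=\omega_\mu(\hat K)^*\,(\tilde\theta(x)/\tilde\theta(1))^*$.
\begin{itemize}
\item If $p\nmid|K|$: from $\tilde\theta\in B_0(G_\theta)$ we get $|K|^*\,(\tilde\theta(x)/\tilde\theta(1))^*=\omega_{\tilde\theta}(\hat K)^*=\omega_{1}(\hat K)^*=|K|^*\neq 0$. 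So the last factor equals $1$.
\item If $p\mid|K|$: both $\omega_{\mu\tilde\theta}(\hat K)^*$ and $\omega_\mu(\hat K)^*$ vanish, because both characters have $p'$-degree.
\end{itemize}
Hence $\mu\tilde\theta$ lies in the same block of $G_\theta$ as the inflation of $\mu$. So $\mathcal{B}$ is just the set of blocks of $G_\theta/N$ containing $p'$-degree characters that are dominated by $B_0(G_\theta)$. Likewise $\mathcal{B}'$ for $H_\theta$ and $\tilde\theta'$.

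Now Lemma~\ref{lem:corollari2.5murai} with $\xi=1$ suffices, applied twice.
\begin{itemize}
\item In $G_\theta$, it relates domination by $B_0(G_\theta)$ to domination by $B_0(\norm{G_\theta}P)$.
\item In $H_\theta$, it relates the latter to domination by $B_0(H_\theta)$. Here $PN\normal H_\theta$, so each maximal-defect block of $H_\theta/N$ is its own Brauer correspondent, and $\norm{H_\theta}P=\norm{G_\theta}P$.
\end{itemize}
After that, \cite[Thm.~3.4]{Nav04} finishes as you say.

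\textbf{Where perfectness is actually used.} In the paper, perfectness gives $o(\theta)=1$. So every $P$-invariant $\theta\in\irrsigma(B_0(N))$ has a canonical extension to $PN$. This extension is $\sigma$-invariant by uniqueness and lies in $B_0(PN)$, by Lemma~\ref{lem:princblockabove}(iv) and \cite[Cors.~6.2,~6.4]{Nav18}. Consequently the hypothesis applies to every relevant $\theta$, and both $\irrsigma(B_0(G)|\theta)$ and $\irrsigma(B_0(N\norm GP)|\theta)$ are nonempty. Your nonemptiness argument, which you yourself left imprecise, is therefore not needed.
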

\begin{proof}
We may argue as in the proof of Theorem \ref{thm:ppalblock gam} that every $\chi\in\irrsigma(B_0(G))$ lies over a $P$-invariant $\theta\in\Irr(B_0(N))$. Since $\theta$ has a number of $G$-conjugates not divisible by $p$ and $\langle \sigma \rangle$ acts on the set of $G$-conjugates of $\theta$, it follows that $\theta$ is also $\sigma$-invariant. Arguing again as in Theorem \ref{thm:ppalblock gam} we conclude that $\chi\in\Irr_{0}(B_0(G))$ is $\sigma$-invariant if and only if its Clifford correspondent $\psi\in\Irr_{0}(B_0(G_\theta))$ is $\sigma$-invariant.

Now each $\theta\in\irrsigma(B_0(N))$ extends to some $\hat\theta\in\irrsigma(B_0(PN))$ by \cite[Cors.~6.2,~6.4]{Nav18} (where we are using that $N$ is perfect so $o(\theta)=1$). By hypothesis, $\theta$ extends to a character in $\irrsigma(B_0(G_\theta))$ and we may follow the proof of Theorem \ref{thm:ppalblock gam} starting at the fourth paragraph to construct a bijection $\irrsigma(B_0(G))\rightarrow\irrsigma(B_0(N\norm G P))$.
\end{proof}

As before, we could obtain more general versions of Proposition \ref{thm:ppalblock gam2}, but given its technicality we have chosen to state it in the simplest way possible. 

\section{Reduction of Theorem \ref{thm:A}}\label{sec:reduction}

In this section, we prove Theorem \ref{thm:A} assuming the results in Section \ref{sec:simples} below. 

\begin{thm}[Navarro--Rizo--Schaeffer Fry--Vallejo]\label{thm:3.4 of NRSV}
Suppose that $G=NP$, where $N$ is a nonabelian nonsimple
minimal normal subgroup of $G$, $\cent GN=1$, $G/N$ is cyclic
and $P \in \syl 3G$.  Let $S\normal N$ be simple and suppose that the Sylow $3$-subgroups of $S$ are not cyclic, $H=\norm GS$, $C=\cent GS$
and let $V \in \syl 3{H/C}$. Then the following hold.
\begin{enumerate}[{\rm (a)}]
\item
$|{\rm Irr}_{3',\sigma}(B_0(G))|\in\{6,9\}$ if and only if  $H>SC$ and $|{\rm Irr}_{3',\sigma}(B_0(H/C))|\in\{6,9\}.$
\item
$|P:\Phi(P)|=9$ if and only if $H>SC$ and $|V:\Phi(V)|=9$.
\end{enumerate}
\end{thm}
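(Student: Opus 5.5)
This theorem carries the name Navarro--Rizo--Schaeffer Fry--Vallejo, and it is the analogue of \cite[Thm.~3.4]{Nav-Riz-Sch-Val20}. So my plan is to re-run the argument of that paper (and its $p=3$ version in \cite{Gia-Riz-Sch-Val24}), replacing each counting step on $\Irr_{3'}(B_0)$ by the corresponding $\sigma$-invariant count. First I fix the setup. Write $N=S_1\times\cdots\times S_t$ with $S=S_1$ and $t>1$; the factors are transitively permuted by $G=NP$. Since $G/N$ is a cyclic $3$-group and $\cent GN=1$, Lemma \ref{lem:carolina} applies to $G$ and to the permutation action. In particular, $H=\norm GS$ has index $t$, and $H/C$ embeds in $\Aut(S)$ with $H/C\geq SC/C\cong S$. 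The quotient $H/SC$ is a cyclic $3$-group, because it is a section of $G/N$ acting as $\alpha_1\cdots$ on the first factor.

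\textbf{Part (b)} is purely group-theoretic and I expect it to transfer directly from the cited papers. Write $P=(P\cap N)\langle x\rangle$, where $x$ induces the $t$-cycle twisted by $(\alpha_1,\dots,\alpha_t)$. The Frattini quotient of $P\cap N=Q_1\times\cdots\times Q_t$ is then a permutation module for $\langle x\rangle$. Computing $|P:\Phi(P)|$ gives $3\cdot|Q_1:\Phi(Q_1)\,[Q_1,V]|$-type data, which amounts to the generator count of $V$. The count equals $9$ exactly when $V>Q_1$ (that is, $H>SC$) and $V$ is $2$-generated. For this I would quote the argument of \cite[Thm.~3.4(b)]{Nav-Riz-Sch-Val20} essentially verbatim.

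\textbf{Part (a)} has two steps.

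The first step parametrizes $\irrsigma(B_0(G))$ by $G$-orbits of characters $\theta\in\Irr(B_0(N))$. Any $\chi$ in this set lies over a $P$-invariant $\theta=\theta_1\times\cdots\times\theta_t$, which is also $\sigma$-invariant by Lemma \ref{lema9.3withtau}. $P$-invariance forces $\theta=\eta\times\eta^{\alpha_1}\times\cdots$ as in Lemma \ref{lem:carolina}(ii), so $\theta$ is determined by $\eta\in\Irr(B_0(S))$ invariant under $V$ and $\sigma$. If $\eta$ is not $V$-invariant as a character of $S$, I would argue that $\theta$ still contributes a multiple of $3$ and push it into a Proposition \ref{pro:noelia relative sigma} count. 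For $V$-invariant $\eta$, Lemma \ref{lem:tensor induction sigma} (or directly tensor induction from $H$) extends $\theta$ to $G$. Gallagher's theorem then gives $\ksigma(B_0(G)|\theta)=|G/N|=3^a$ whenever the extension can be chosen $\sigma$-invariant in $B_0$.

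The second step compares this with $H/C$, where the same Clifford and Gallagher analysis applies with $S$ in place of $N$. The conclusion I am aiming for is that $\ksigma(B_0(G))$ is congruent to, and bounded in terms of, $\ksigma(B_0(H/C))$ in such a way that the value set $\{6,9\}$ is preserved. When $H=SC$, I would show $\ksigma(B_0(G))\geq 3\cdot k$ with $k\geq 4$ nontrivial contributions: Lemma \ref{lem:p-action} gives at least $3$ $V$-invariant $\sigma$-characters in $B_0(S)$, and since $S$ has noncyclic Sylow subgroups there are more. This would place the count outside $\{6,9\}$, matching the fact that $|P:\Phi(P)|\neq 9$ in that case.

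The main obstacle is this counting. One must show that the non-diagonal $\theta$ (those whose $G$-orbit has length divisible by $3$), together with the multiplicities from $G/N$, do not accidentally yield $6$ or $9$. One must also show that when $H>SC$ the totals match exactly. My first attempt would be to apply Proposition \ref{pro:noelia relative sigma} to each $P$-invariant $\theta$, which makes each fibre size divisible by $3$, and then compare fibres with those of $H/C$ via the bijection $\eta\leftrightarrow\theta$. This should mirror the corresponding lemmas of \cite{Gia-Riz-Sch-Val24}.
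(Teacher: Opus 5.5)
Deferring to \cite[Thm.~3.4]{Nav-Riz-Sch-Val20} is exactly what the paper does: its proof is the instruction to mimic that argument, with the noncyclicity hypothesis replacing a fact that is automatic for $p=2$. For (b) that suffices. Your sketch of (a), however, has a genuine gap. It contains a step that fails, and the comparison with $H/C$, which you leave as the main obstacle, is never established.

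The Gallagher count is wrong. Since $N$ is perfect, a $G$-invariant $\theta\in\Irr_{3'}(B_0(N))$ has a unique extension $\hat\theta$ of determinantal order $1$ \cite[Cors.~6.2,~6.4]{Nav18}. By uniqueness it is $\sigma$-invariant, and it lies in $B_0(G)$ by Lemma \ref{lem:princblockabove}(iv). For $\beta\in\Irr(G/N)$ we have $\beta^\sigma=\beta^4$, so $\beta\hat\theta$ is $\sigma$-invariant if and only if $\beta^3=1$. As $G/N$ is a nontrivial cyclic $3$-group, $\ksigma(B_0(G)|\theta)=3$, not $|G/N|$. With your count, the $G$-side and the $H/C$-side would differ by the factor $|G:N|/|H:SC|\geq t>1$, so no equivalence could follow.

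The ``non-diagonal'' $\theta$ never arise. Since $G/N$ is a $3$-group, Clifford's theorem forces every $\chi\in\Irr_{3'}(B_0(G))$ to lie over a $G$-invariant $\theta$. So characters of $N$ whose $G$-orbit has length divisible by $3$ contribute nothing. Proposition \ref{pro:noelia relative sigma} and congruence or bound arguments are beside the point, and they could not give an ``if and only if'' anyway.

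Lemma \ref{lem:carolina} does not apply when $H>N$. Its hypothesis $\norm G{S_i}=N$ is what forces $\alpha_1\cdots\alpha_t=1$. In general $\alpha_1\cdots\alpha_t$ is the automorphism of $S$ induced by $x^t$. Then $G$-invariance of $\theta=\theta_1\times\cdots\times\theta_t$ is equivalent to $H$-invariance of $\theta_1$.

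The missing idea is that the count is exact. The map $\theta\mapsto\theta_1$ is a bijection from the $G$-invariant characters of $N$ onto the $H$-invariant characters of $S$. It preserves $3'$-degree (as $\theta(1)=\theta_1(1)^t$), the principal block (Lemma \ref{lem:princblockabove}(v)) and $\sigma$-invariance. Hence $\ksigma(B_0(G))=3m$, where $m$ is the number of $H$-invariant characters in $\irrsigma(B_0(S))$.

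If $H>SC$, apply the same argument to $SC/C\cong S$ inside $H/C$, where $H/SC$ is a nontrivial cyclic $3$-group. This gives $\ksigma(B_0(H/C))=3m$, so the two quantities in (a) are literally equal.

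If $H=SC$, then $m=\ksigma(B_0(S))$. This is a positive multiple of $3$ by Lemma \ref{lem:p-action}. It is not $3$ by \cite[Thm.~A]{Riz-Sch-Val20}, because $S$ has noncyclic Sylow $3$-subgroups. So $\ksigma(B_0(G))\geq 18$ and both sides of (a) are false; this is where noncyclicity enters (a).

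It enters (b) in the same case. If $H=SC$, one can replace $x$ by $xq$ with $q\in P\cap S$ so that $(xq)^t\in\cent GN=1$. Then $P\cong Q_1\wr C_t$ and $|P:\Phi(P)|=3^{d(Q_1)+1}$, which would be $9$ if $Q_1$ were cyclic. Your description of (b) tacitly relies on this.
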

\begin{proof}
Mimic the proof of \cite[Thm.~3.4]{Nav-Riz-Sch-Val20}. Notice that \cite[Thm.~3.4]{Nav-Riz-Sch-Val20} did not require the hypothesis on non-cyclic Sylow subgroups of $S$, because nonabelian simple groups never have cyclic Sylow $2$-subgroups.
\end{proof}

Next is the proof of Theorem \ref{thm:A}, assuming the results of Section \ref{sec:simples}. For the remainder of the section, we let $p=3$. 

\begin{thm}
    Let $G$ be a finite group, let $B_0(G)$ be the principal 3-block of $G$, and let $P\in\syl 3 G$. Suppose that $k_{0,\sigma}(B_0(G))\in\{6,9\}$. Then $|P:\Phi(P)|=9.$
\end{thm}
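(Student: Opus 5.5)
We argue by contradiction with $G$ a minimal counterexample: $k_{0,\sigma}(B_0(G))\in\{6,9\}$ but $|P:\Phi(P)|\neq 9$.

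\textbf{Step 1: $\oh{3'}G=1$.} By Lemma \ref{lem:princblockabove}(i), $B_0(G)$ and $B_0(G/\oh{3'}G)$ have the same irreducible characters. The Sylow $3$-subgroups of $G$ and $G/\oh{3'}G$ are isomorphic. So minimality forces $\oh{3'}G=1$.

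\textbf{Step 2: $\Phi(P)$ and a minimal normal subgroup.} Let $K$ be the intersection of the kernels of the characters in $\irrsigma(B_0(G))$. By Lemma \ref{lem:kernel of sigmas}, $K\le\Phi(P)$. All $6$ or $9$ of these characters then lie in $B_0(G/K)$, and $\Phi(P/K)=\Phi(P)/K$.
\begin{itemize}
\item If $K>1$, minimality applied to $G/K$ gives the result, provided $k_{0,\sigma}(B_0(G/K))=k_{0,\sigma}(B_0(G))$. This equality holds because $\irrsigma(B_0(G/K))\sbs\irrsigma(B_0(G))$ and every character of the latter has $K$ in its kernel.
\item So we may assume $K=1$.
\end{itemize}
Now take a minimal normal subgroup $N$ of $G$. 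Since $\oh{3'}G=1$, $N$ is either an elementary abelian $3$-group or a direct product of nonabelian simple groups of order divisible by $3$. Split the characters of $\irrsigma(B_0(G))$ according to the $P\times\langle\sigma\rangle$-invariant characters $\theta\in\Irr(B_0(N))$ lying under them, using Lemma \ref{lema9.3withtau}. Then:
\begin{itemize}
\item Proposition \ref{pro:noelia relative sigma} (if $3\mid|G:N|$) shows that each nonempty piece $\irrsigma(B_0(G)|\theta)$ has size divisible by $3$.
\item Lemma \ref{lem:p-action} shows the number of relevant $\theta$ is a positive multiple of $3$ when $3\mid|N|$.
\end{itemize}
Counting, $6$ or $9$ leaves very few configurations. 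Either $3\nmid|G:N|$, or there are exactly $3$ invariant $\theta$ each contributing $3$, or there is a single $\theta=1_N$ contributing everything. In the last case $k_{0,\sigma}(B_0(G/N))\in\{6,9\}$, and induction on $G/N$ together with the relation between $\Phi(P)$ and $N$ should close the argument.

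\textbf{Step 3: reduce to $G=NP$ with $G/N$ cyclic and $\cent GN=1$.}
\begin{itemize}
\item If $N$ is abelian, we aim to show $G/N$ has cyclic Sylow $3$-subgroups, or to reach a contradiction otherwise. Then Theorem \ref{thm:ppalblock gam} transfers the count to $\norm GP$. For $p$-solvable-type local groups the statement (and its converse) should be elementary, via Theorem \ref{Bra76} to describe $G/N$.
\item If $N$ is nonabelian, we use Theorem \ref{thm:Alperin-Dade} and Proposition \ref{thm:ppalblock gam2} to replace $G$ by $N\norm GP$, or by $NP$ modulo $3'$-index pieces. Lemma \ref{lem:tensor induction sigma} guarantees enough extendible $\sigma$-invariant characters; its hypothesis is supplied by the almost simple results of Section \ref{sec:simples}.
\end{itemize}
Once $G=NP$ with $\cent GN=1$:
\begin{itemize}
\item If $N$ is simple, $G$ is almost simple and Ketchum's theorem \cite{eden} applies.
\item If $N$ is not simple and $S$ has noncyclic Sylow $3$-subgroups, Theorem \ref{thm:3.4 of NRSV} reduces to $H/C$, which is almost simple.
\item If $S$ has cyclic Sylow $3$-subgroups, Theorem \ref{Bra76} and a direct computation using Lemma \ref{lem:carolina} should handle it.
\end{itemize}

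\textbf{Main obstacle.} I expect the hardest step to be Step 3: showing $G/N$ may be taken cyclic, with $P$ generating it modulo $N$. This needs $\sigma$-invariant extensions across non-$3$-solvable quotients, where only principal-block Galois–McKay-type information is available. It is exactly where Theorem \ref{thm:B}, Proposition \ref{thm:ppalblock gam2} and Brauer's structure theorem (Theorem \ref{Bra76}) must be combined carefully.
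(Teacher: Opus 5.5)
Your architecture matches the paper's: a minimal counterexample with $\oh{3'}G=1$, a minimal normal subgroup $N$, Theorem~\ref{thm:B} for abelian $N$, and Ketchum's theorem with Theorem~\ref{thm:3.4 of NRSV} at the end. However, the counting in your Step~2, which is supposed to make everything else routine, is false. Lemma~\ref{lem:p-action} says the number of $P$-invariant characters in $\irrsigma(B_0(N))$ is divisible by $3$. By Lemma~\ref{lema9.3withtau}, though, the pieces $\irrsigma(B_0(G)|\theta)$ are indexed by $\norm GP$-orbits of such $\theta$, and $\norm GP$ can fuse them. Moreover, for abelian $N$ only those $\theta$ with a $\sigma$-invariant (i.e.\ $\Phi(P)$-trivial) linear extension to $P$ contribute at all. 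For example, let $G=C_3^2\rtimes C_2$ with the involution acting by inversion, and let $N$ be one of the four subgroups of order $3$. Then $\oh{3'}G=1$, your $K$ is trivial, and $3$ divides $|G:N|$. Yet $\ksigma(B_0(G))=6=3+3$ with exactly two pieces (over $1_N$ and over the fused pair $\theta,\bar\theta$), which fits none of your three configurations. In general the splittings $3+3$, $6+3$ and $3+6$ over $1_N$ and one further class are possible. These are precisely the situations with $\ksigma(B_0(G/N))\in\{3,6\}$, and they occupy most of the paper's proof. You also leave the case $3\nmid|G:N|$ untreated. For abelian $N$ it forces $P=N\normal G$. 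For nonabelian $N$ it needs \cite{eden} when $t=1$, and Theorem~\ref{thm:simple group conditions}(ii) with the case analysis of \cite[Thm.~6.3]{Gia-Riz-Sch-Val24} when $t>1$. A smaller point: having $K$ in its kernel does not put a character of $B_0(G)$ into $B_0(G/K)$, since for a $3$-group $K$ the block $B_0(G)$ may dominate several blocks of $G/K$. This is easily repaired: $\ksigma(B_0(G/K))$ is a multiple of $3$, and the value $3$ would make $P/K$ cyclic, hence $P$ cyclic (as $K\le\Phi(P)$), which \cite[Thm.~A]{Riz-Sch-Val20} forbids.

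Your Step~3, which you rightly call the main obstacle, is where the proof actually lies, and there you give only intentions. Here is what is needed.

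(a) Exclude $\ksigma(B_0(G/N))=6$. For abelian $N$ (if $N\le\Phi(P)$, induction on $G/N$ finishes), take $\lambda\in\Irr(P/\Phi(P))$ with $\theta=\lambda_N\neq 1_N$. Extend $\theta$ into $\irrsigma(B_0(G_\theta))$ by Lemma~\ref{lem:linear sigma-invariant}. Gallagher's theorem, \cite[Lem.~2.4]{Riz18} and \cite[Thm.~A]{Riz-Sch-Val20} then give at least $6$ characters over $\theta$, on top of the $6$ over $1_N$. For nonabelian $N$, first prove that $P$ is transitive on the simple factors, then use Theorem~\ref{thm:simple group conditions}(i) with Lemma~\ref{lem:tensor induction sigma}.

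(b) When $\ksigma(B_0(G/N))=3$ and $N$ is abelian, Theorem~\ref{thm:B} and induction reduce to $P\normal G$. That case is settled through $k(G/\Phi(P))\in\{6,9\}$ and \cite{Ver-Ver85}, not through Theorem~\ref{Bra76}.

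(c) For nonabelian $N$, prove that $N$ is the unique minimal normal subgroup. Bound the number $s$ of $P$-orbits on the factors via $\ksigma(B_0(G))\geq 3(s+1)$, and rule out $s=2$ using Proposition~\ref{thm:ppalblock gam2}. Only then apply Theorem~\ref{Bra76} to $G/N$. If $3\nmid|M/N|$ for $M=\bigcap_i\norm G{S_i}$, Lemma~\ref{lem:carolina} with Theorem~\ref{thm:simple group conditions}(iii) yields three non-conjugate $P$-invariant characters, a contradiction. In the $3$-solvable case, an index-$\leq 2$ argument with \cite[Thm.~2.8, Lem.~2.5]{Nav-Riz-Sch-Val20} and Theorem~\ref{thm:Alperin-Dade} reaches $G=PN$, where Theorem~\ref{thm:3.4 of NRSV} and \cite{eden} apply.

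Finally, when $S$ has cyclic Sylow $3$-subgroups, neither Theorem~\ref{Bra76} nor Lemma~\ref{lem:carolina} is what is needed. Once $PN$ is known to be transitive on the factors, one checks directly that $P=\langle y,z_1\rangle$. Here $y$ generates $P$ modulo $Q=P\cap N$, and $z_1$ generates the Sylow $3$-subgroup of one factor.
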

\begin{proof}
    We argue by induction on $|G|$. Since $\irrsigma(B_0(G))=\irrsigma(B_0(G/\oh{p'}G))$ by \cite[Thm.~9.9]{Nav98}, we may assume $\oh{p'}G=1$. We can also assume that $P$ is not cyclic by \cite[Thm.~A]{Riz-Sch-Val20}. 

    \medskip

    \textit{Step 1: We may assume $P$ is not normal in $G$}.
    
    If $P\normal G$, since $\oh{p'}G=1$, we have that $|\irr{G/\Phi(P)}|\in\{6,9\}$ (see \cite[Lem.~2.2]{Riz-Sch-Val20}, for instance). Now we conclude by examining the structure of the Sylow 3-subgroups of groups having 6 or 9 conjugacy classes and normal Sylow $3$-subgroups (see \cite{Ver-Ver85}) (notice that $|P/\Phi(P)|>p$, since otherwise $P$ is cyclic).

\medskip
Let $N$ be a minimal normal subgroup of $G$.
\medskip

    \textit{Step 2: We may assume $N$ is not $p$-elementary abelian.}

    Assume $N$ is $p$-elementary abelian. We have the following cases.
    \begin{enumerate}
        \item $\ksigma(B_0(G/N))=1$. In this case $|G/N|$ is not divisible by $p$ by Lemma \ref{lem:p-action}, so $P=N\normal G$ and we are done by Step 1.
          \item $\ksigma(B_0(G/N))=\ksigma(B_0(G))$. In this case $N\sbs\ker\chi$ for all $\chi\in\irrsigma(B_0(G))$. By Lemma \ref{lem:kernel of sigmas}, $N\sbs\Phi(P)$. By induction, $|P:\Phi(P)|=|P/N:\Phi(P)/N|=9$ and we are done.
        \item $\ksigma(B_0(G/N))=6$. By the previous case, $\ksigma(B_0(G))=9$. In this case, by induction we have $|P/N:\Phi(P)N/N|=9$ and it suffices to show that $N\subseteq \Phi(P)$. Assume otherwise, and let $\lambda\in{\rm Irr}(P/\Phi(P))$, and suppose that $1_N\neq\theta=\lambda_N\in{\rm Irr}(N)$ (here we are using that $N\not\sbs\Phi(P)$). Now, $\theta$ extends to $\lambda\in{\rm Irr}_{0,\sigma}(P)$ and we can apply Lemma \ref{lem:linear sigma-invariant}, then $\theta$ extends to $\hat\theta\in{\rm Irr}_{0,\sigma}(B_0(G_\theta))$. By Proposition \ref{pro:noelia relative sigma} and the hypothesis, we have that $\ksigma(B_0(G_\theta)|\theta)=3$. Since $\hat\theta$ is $\sigma$-invariant, by Gallagher's theorem and \cite[Lem.~2.4]{Riz18}, $\ksigma(B_0(G/N))\leq \ksigma(B_0(G_\theta)|\theta)=3<6=\ksigma(B_0(G/N))$, a contradiction.
        \item $\ksigma(B_0(G/N))=3$. In this case, $G/N$ has cyclic Sylow $3$-subgroups by the main result of \cite{Riz-Sch-Val20}, so by Theorem \ref{thm:ppalblock gam} we have $\ksigma(B_0(G))= \ksigma(B_0(\norm G P))$ and we are done by Step 1.
    \end{enumerate}

    \medskip

Therefore we may assume $N=S_1\times\dots\times S_t$ where the $S_i$'s are nonabelian simple groups of order divisible by $p$.

\medskip

    \textit{Step 3: If $\ksigma(B_0(G/N))=1$ then we are done.}
    
    In this case $G/N$ is not divisible by $p$ and hence $P\subseteq N$. Since $\oh{p'}G=1$, we also have that $\cent G N=1$. If $t=1$, we conclude by applying the main result of \cite{eden}. Hence, we may assume that $t>1$. By the Frattini argument, we have that $G=N\norm G P$, so $M=N\cent G P$ is normal in $G$. Then ${\rm Irr}(G/M)\subseteq{\rm Irr}_{0,\sigma}(B_0(G))$ (see \cite[Lem.~2.1]{Gia-Riz-Sch-Val24}). Then, by Theorem \ref{thm:simple group conditions}, there are at least 5 character degrees in ${\rm Irr}_{0,\sigma}(B_0(N))$, and so the same holds in ${\rm Irr}_{0,\sigma}(B_0(M))$ by Theorem \ref{thm:Alperin-Dade}. For every $\psi\in{\rm Irr}_{0,\sigma}(B_0(M))$ there is at least one $\chi\in{\rm Irr}_{p',\sigma}(B_0(G))$ over it, so the set of character degrees of ${\rm Irr}_{0,\sigma}(B_0(M))\setminus\{1_M\}$ has size at most $9-k(G/M)$. This, together with the fact that $|G/M|$ is not divisible by 3, forces $k(G/M)\in\{1,2,4,5\}$. Now, we may argue as in \cite[Thm.~6.3]{Gia-Riz-Sch-Val24}, Step 2, cases (2.a.i)--(2.a.iv) using Theorem \ref{thm:almost simple}(iii) instead of Theorem 3.1(c) in loc. cit. to finish:

    \begin{enumerate}
        \item Case $k(G/M)=1$. In this case $G=M$, and by Theorem \ref{thm:Alperin-Dade}, we obtain $$\ksigma(B_0(S))^t=\ksigma(B_0(N))=\ksigma(B_0(M))=9.$$ This forces $t=2$ and $\ksigma(B_0(S))=3$. By \cite[Thm.~A]{Riz-Sch-Val20}, this means that $S$ has cyclic Sylow subgroups and hence $P\sbs N\cong S\times S$ is 2-generated, as wanted.
        \item Case $k(G/M)=2$. In this case, arguing as in (2.a.ii) of \cite[Thm.~6.3]{Gia-Riz-Sch-Val24}, we obtain again that $\ksigma(B_0(S))=3$ and $t=2$. We remark that one needs to apply \cite[Lem.~4.2]{Nav-Tie21} to show that there is a height-zero $\sigma$-invariant $\chi\in\Irr(B_0(G))$ lying above every character in $\irrsigma(B_0(M))$.
        \item Case $k(G/M)\in\{4,5\}$. Arguing as in cases (2.a.iii) and (2.a.iv) of \cite[Thm.~6.3]{Gia-Riz-Sch-Val24} (and applying again \cite[Lem.~4.2]{Nav-Tie21}), we obtain that either $t=2$ and $\ksigma(B_0(S))=3$ (and we conclude as above), or $t=2$ and $\ksigma(B_0(S))=6$. In this case, $S$ is not cyclic (by the main result of \cite{Riz-Sch-Val20}, and hence $9\mid |S|$. We conclude as in cases (2.a.iii) and (2.a.iv) by using Theorem \ref{thm:almost simple}(iii).
    \end{enumerate}
    
\medskip

    \textit{Step 4: If $\ksigma(B_0(G/N))=\ksigma(B_0(G))$ then we are done.}
    
    In this case, every character in $\irrsigma(B_0(G))$ contains $N$ in its kernel. Since $P$ acts on $\irrsigma(B_0(N))$ (a set of size divisible by $3$ by Lemma \ref{lem:p-action}) and fixes at least one character, then it fixes at least $3$. Let $\eta\in\irrsigma(B_0(N))$ be nontrivial and $P$-invariant. By \cite[Cors.~6.2,~6.4]{Nav18} there is a $\psi\in\irrsigma(PN|\eta)$ extending $\eta$. Then $\psi$ extends to $\hat\psi\in\irrsigma(PN\cent G P|\eta)$ by Theorem \ref{thm:Alperin-Dade}, and $\hat\psi^G$ contains some $\chi\in\irrsigma(B_0(G))$ by \cite[Lem.~2.4]{Riz-Sch-Val20}. Since $\chi_N$ contains $\eta$ then $\chi\in\irrsigma(B_0(G))-\irrsigma(B_0(G/N))$, a contradiction.

\medskip

    \textit{Step 5: If $\ksigma(B_0(G/N))=6$ then we are done.}
    
In this case, we may assume $\ksigma(B_0(G))=9$ by Step 4. We first prove that $P$ acts transitively on the simple direct factors of $N$. Suppose the contrary and let $N=R\times T$, where $R$ is the direct product of the elements in a $P$-orbit of $\{S_1,\ldots, S_t\}$. Since $p$ divides $|R|$ and $|T|$, we know that $p$ divides $\ksigma(B_0(R))$ and $\ksigma(B_0(T))$, so there are at least two nontrivial $P$-invariant characters $\theta_R,\xi_R\in\irrsigma(B_0(R))$ and two nontrivial $P$-invariant characters $\theta_T,\xi_T\in\irrsigma(B_0(T))$. Now, $1_N,1_R\times \theta_T$ and $\theta_R\times\theta_T$ are $P\times\langle\sigma\rangle$-invariant and not $G$-conjugate. But this is a contradiction, since there are at least 6 irreducible characters in $\irrsigma(B_0(G))$ lying over $1_N$ and there are at least 3 distinct characters in $\irrsigma(B_0(G))$ lying over each $1_R\times \theta_T$ and $\theta_R\times\theta_T$ by Proposition \ref{pro:noelia relative sigma}. Hence $P$ acts transitively on $\{S_1,\ldots, S_t\}.$

 Now, by induction we have that $|PN:\Phi(P)N|=9$. Suppose that Theorem \ref{thm:simple group conditions}(i)(a) holds for $S$ and let $\theta_1,\theta_2$ be as therein. By Lemma \ref{lem:princblockabove}(v), let \[\tilde{\theta_i}=\theta_i\times\dots\times\theta_i\in{\rm Irr}_{p',\sigma}(B_0(N)),\] so $\tilde{\theta}_1$ and $\tilde{\theta}_2$ are $P$-invariant. Note that $\tilde\theta_1$ and $\tilde\theta_2$ extend to characters in $\irrsigma(B_0(PN))$ by \cite[Cors.~6.2,~6.4]{Nav18}, and that $\tilde\theta_1$ and $\tilde\theta_2$ are not $G$-conjugate. By Proposition \ref{pro:noelia relative sigma} each of these characters has at least 3 irreducible characters in $\irrsigma(B_0(G))$ lying above them, a contradiction.
Finally suppose that Theorem \ref{thm:simple group conditions}(i)(b) holds for $S$ and let $\theta\in{\rm Irr}_{p'}(B_0(S))$ be the character given by the statement of Theorem \ref{thm:simple group conditions}(i)(b).  
Let $\tilde{\theta}\in{\rm Irr}_{p'}(B_0(N))$ be the product of copies of $\theta$, so that $\tilde{\theta}$ is $P$-invariant. Notice then that $G_{\tilde \theta}$ acts transitively on the simple direct factors of $N$ (because $P$ does). By Lemma \ref{lem:tensor induction sigma} we also know that $\tilde \theta$ extends to some $\phi \in\irrsigma(B_0(G_{\tilde \theta}))$. Since $\phi$ is $\sigma$-invariant, by Gallagher's theorem and \cite[Lem.~2.4]{Riz18}, we have that $\ksigma(B_0(G_{\tilde \theta})|\tilde{\theta})\geq \ksigma(B_0(G_{\tilde \theta}/N))$. By Clifford's theorem, $\ksigma(B_0(G)|\tilde\theta)\geq\ksigma(B_0(G_{\tilde \theta}/N))$. Since $\ksigma(B_0(G/N))=6$ we have that $PN/N$ is not cyclic, then  $\ksigma(B_0(G_{\tilde{\theta}}/N))>3$ by applying \cite[Thm.~A]{Riz-Sch-Val20} twice, and this gives a contradiction as $\ksigma(B_0(G))-\ksigma(B_0(G/N))= 3$.

\medskip

Therefore we have that $\ksigma(B_0(G/N))=3$ and in particular $G/N$ has cyclic Sylow $3$-subgroups by \cite[Thm.~A]{Riz-Sch-Val20}. Recall that $N=S_1\times\dots\times S_t$, where $S_i\cong S$, nonabelian simple group of order divisible by $p$.

\medskip

\textit{Step 6: $N$ is the unique minimal normal subgroup of $G$ and $t\geq 2$.}

 If $M$ is another minimal normal subgroup, then by Steps 3, 4 and 5 we may assume that $\ksigma(B_0(G/M))=3.$ By \cite[Thm.~A]{Riz-Sch-Val20} we have that $N\cong NM/M$ and $G/N$ have cyclic Sylow 3-subgroups, then $P$ is metacyclic but not cyclic, and thus it is $2$-generated, as wanted.  If $N$ is simple then $G$ is almost simple and we are done by Theorem \ref{thm:almost simple}, so we assume $t\geq 2$.

\medskip

Our next goal is to show that $PN$ acts transitively on the set $\{S_1,\dots,S_t\}$.  We write $N=M_1\times\dots\times M_s$ where each $M_i$ is the direct product of a single $P$-orbit on $\{S_1,\dots,S_t\}$.

\medskip

      \textit{Step 7: We may assume $s\leq 2$.}

  Since $M_i\normal PN$, for each $M_i$ we can find a nontrivial $P$-invariant $\theta_i\in\irrsigma(B_0(M_i))$, using Lemma \ref{lem:p-action}. Then, by Lemma \ref{lem:princblockabove}(v), we can produce $s+1$ characters
    $$1_N, (\theta_1\times 1_{M_2}\times\dots\times 1_{M_s}), (\theta_1\times\theta_2\times 1_{M_3}\times\dots\times 1_{M_s}),\dots,(\theta_1\times\dots\times \theta_s)$$
    that lie in $\irrsigma(B_0(N))$ and are $P$-invariant and not $G$-conjugate. Since $N$ is perfect, all of these extend to characters in $\irrsigma(B_0(PN))$ by \cite[Cors.~6.2,~6.4]{Nav18} and Lemma \ref{lem:princblockabove}(iv). Proposition \ref{pro:noelia relative sigma} produces $3$ distinct characters in $\irrsigma(B_0(G))$ above each of these, so $\ksigma(B_0(G))\geq 3(s+1)$. If $s>2$ we arrive at a contradiction.

\medskip

    \textit{Step 8: We may assume $s\neq 2$.}
    
    If $s=2$ then $N=M_1\times M_2$ and $M_1$ and $M_2$ are $G$-conjugate but not $P$-conjugate. Suppose that Theorem \ref{thm:simple group conditions}(i)(a) holds for $S_i$ and let $\theta_1$ and $\theta_2$ be the characters as therein. Let $\tilde\theta_i=\theta_i\times\cdots\times\theta_i\in{\rm Irr}(B_0(M_1))$, using Lemma \ref{lem:princblockabove}(v).  Then  the characters
    $$1_N, \tilde\theta_1\times 1_{M_2},\tilde\theta_2\times 1_{M_2}$$
    are not $G$-conjugate. Further, they are not conjugate to the character $\eta\in\irrsigma(B_0(N))$ constructed as the direct product of copies of $\theta_1$. Now all the characters $1_N, \tilde\theta_1\times 1_{M_2},\tilde\theta_1\times 1_{M_2}$ and $\eta$ are $P$-invariant so by \cite[Cors.~6.2,~6.4]{Nav18} and Lemma \ref{lem:princblockabove}(iv), they extend to characters in $\irrsigma(B_0(PN))$. By Proposition \ref{pro:noelia relative sigma}, we have that $\ksigma(B_0(G))\geq 12$, which is absurd.

    Therefore we assume that the $S_i$ are as in Theorem \ref{thm:simple group conditions}(i)(b). Write $S=S_1$ and let $\theta\in{\rm Irr}(S)$ be the character of $S$ given in that statement. Let $\eta_i=\theta\times\theta\times\cdots\times\theta\in\irrsigma(B_0(M_i))$ (using Lemma \ref{lem:princblockabove}(v)). We claim that every $P$-invariant character in $\irrsigma(B_0(N))$ is $G$-conjugate to $1_N$, $\eta_1\times 1_{M_2}$ and $\eta_1\times \eta_2$. Indeed, if there is $\xi\in\irrsigma(B_0(N))$ not $G$-conjugate to any of these, then $\xi$ extends to $\irrsigma(B_0(PN))$ by \cite[Cors.~6.2,~6.4]{Nav18} and Lemma \ref{lem:princblockabove}(iv). By Proposition \ref{pro:noelia relative sigma} we have that $\ksigma(B_0(G))\geq 12$, a contradiction that proves the claim. Moreover, notice that $\irrsigma(B_0(G)|1_N)=3$.

    Let $L=\norm G{M_1}=\norm G {M_2}$ and notice that $|G:L|=2$,  $G_{\eta_1\times 1_{M_2}}=L_{\eta_1\times 1_{M_2}}=L_{\eta_1}$ and $M_2\sbs L_{\eta_1}$. Since $PN$ acts transitively on the copies of $S$ inside $M_1$ and $PN\sbs L_{\eta_1}$, by Lemma \ref{lem:tensor induction sigma} we have that $\eta_1$ extends to $\hat\eta_1\in\irrsigma(B_0(L_{\eta_1}))$. Now since $M_2$ is perfect $(\hat\eta_1)_N=\eta_1\times 1_{M_2}$. Similarly, $1_{M_1}\times \eta_2$ extends to $\hat\eta_2\in\irrsigma(B_0(L_{\eta_2}))$.

    We claim that $\eta=\eta_1\times\eta_2$ extends to a character in $\irrsigma(B_0(G_{\eta}))$. If $G_{\eta}L=G$ then $G_{\eta}$ acts transitively on the set $\{S_1,\dots, S_t\}$ and we are done applying Lemma \ref{lem:tensor induction sigma}. Otherwise, $G_{\eta}\sbs L$ and therefore $G_{\eta}= L_{\eta_1}\cap L_{\eta_2}$. Now, since $PN\leq G_{\eta}$, $G_{\eta}$ also acts transitively on the copies of $S$ contained in $M_1$ and in $M_2$. Repeating the previous argument, $\eta_1\times 1_{M_2}$ and $1_{M_1}\times \eta_2$ extend to $\mu_1,\mu_2\in\irrsigma(B_0(G_{\eta}))$ respectively. Now, by \cite[Lem.~2.3]{Gia-Riz-Sam-Sch20}, $\mu_1\mu_2\in\irrsigma(B_0(G_{\eta}))$ and
    $$(\mu_1\mu_2)_{N}=(\eta_1\times 1_{M_2})(1_{M_1}\times \eta_2)=\eta$$
    and we are done.

    Therefore, we are in the case of the hypotheses of Proposition \ref{thm:ppalblock gam2}, so \[\ksigma(B_0(G))=\ksigma(B_0(N\norm G P))\] so if $N\norm G P\leq G$ we are done by induction. Otherwise $PN\normal G$.

    Let $X=PN\cent G P$. Then $\Irr(G/X)\sbs\irrsigma(B_0(G)|1_N)$. By the previous paragraphs and Proposition \ref{pro:noelia relative sigma}, we have $\ksigma(B_0(G)|\eta_1\times 1_{M_2})=3=\ksigma(B_0(G)|\eta_1\times\eta_2)$, and this implies that $\ksigma(B_0(G)|1_N)=3$. Now $p$ does not divide $|G:X|$, so we have $|G:X|\leq 2$. By Theorem \ref{thm:Alperin-Dade}, $\ksigma(B_0(X))=\ksigma(B_0(PN))$. If $X=G$, by induction we may assume $PN=G$ but then $N$ is minimal normal in $PN$ and $P$ acts transitively on $\{S_1,\dots, S_t\}$ so we are done. Otherwise, $|G:X|=2$, so $\irrsigma(B_0(X))\leq 15$. By Theorem \ref{thm:Alperin-Dade} and \cite[Lem.~2.5]{Nav-Riz-Sch-Val20} we have that    $$3|\irrpsigma(B_0(N))|=|\irrsigma(B_0(PN))|=|\irrsigma(X)|\leq 15.$$ Since $3$ divides $|\irrpsigma(B_0(N))|$, this forces $|\irrpsigma(B_0(N))|=3$, so $|\irrsigma(X)|=9$ and we are done by induction.

\medskip

\textit{Final step.}

We conclude that $s=1$ so $PN$ acts transitively on the set $\{S_1,\dots,S_t\}$. Recall that $PN/N$ is cyclic.

Assume first $S_i$ has noncyclic Sylow $p$-subgroups. Consider $M=\bigcap \norm G {S_i}\normal G$ and notice that $M\sbs \norm G S$. Since $p$ divides $|G:\norm G S|$ then $p$ divides $|G:M|$. By Theorem \ref{Bra76}, either $G/N$ is $p$-solvable or $|M/N|$ is not divisible by $p$. 

Suppose first that $|M/N|$ is not divisible by $p$. Notice that, since $PN/N$ is abelian and $PN$ acts transitively on $\{S_1,\ldots, S_t\}$, then $\norm {PN}{S_i}=\norm{PN}{S_j}$ for every $i,j$. Then, $N\sbs\norm {PN} S\sbs M$, and therefore $N=\norm {PN}{S}$. Now, we can apply Lemma \ref{lem:carolina} and Theorem \ref{thm:simple group conditions}(iii) to obtain three $P$-invariant characters in $\irrsigma(B_0(N))$ that are not $G$-conjugate. Now, Proposition \ref{pro:noelia relative sigma} gives 9 characters in $\irrsigma(B_0(G))$ not in $\irrsigma(B_0(G/N))$, a contradiction.

Hence, we may assume that $G/N$ is $p$-solvable. Theorem \ref{Bra76} implies that there exists $L\leq G$ with $|G:L|\in\{1,2\}$ such that $L/N$ has a normal $p$-complement $K/N$. If $|G:L|=2$, then $\ksigma(B_0(L))\leq 16$ and since $\ksigma(B_0(L))$ is divisible by 3, we obtain that  $\ksigma(B_0(L))\in\{3, 6, 9, 12, 15\}$. Notice that $K=\Oh{p}L$, and hence, by \cite[Lem.~2.5]{Nav-Riz-Sch-Val20}, we obtain that $k_{0,\sigma,P}(B_0(K))\in\{1,2,3,4,5\}$. Since 3 divides $|K|$, $k_{0,\sigma,P}(B_0(K))\equiv\ksigma(B_0(K))\equiv 0$ mod 3, and then the only possibility is $k_{0,\sigma,P}(B_0(K))=3$. But in this case $\ksigma(B_0(L))=9$ and we are done by induction.

Hence we may assume that $G=L=KP$, with $K\normal G$. Now by \cite[Thm.~2.8]{Nav-Riz-Sch-Val20}, we have that $\ksigma(B_0(G))=\ksigma(B_0(N\norm G P)$ and therefore, by induction, we can assume that $PN\normal G$. Now consider $X=PN\cent G P$ and notice that $\irr {G/X}\sbs \irrsigma(B_0(G)|1_N)$ by \cite[Lem.~2.1]{Gia-Riz-Sch-Val24}. Since $p$ divides $|N|$, we have that $k_{0,\sigma,P}(B_0(N))\equiv\ksigma(B_0(N))\equiv 0$ mod $p$. By Proposition \ref{pro:noelia relative sigma}, we obtain that $|\irrsigma(B_0(G)|1_N)|=3$. Since $|G:X|$ is not divisible by 3, we obtain that $|G:X|\in\{1,2\}$. If $|G:X|=2$, then $$p\cdot k_{0,\sigma,P}(B_0(N))=\ksigma(B_0(PN))=\ksigma(B_0(X))\leq 15,$$ where we have used \cite[Lem.~2.5]{Nav-Riz-Sch-Val20} in the first equality and Theorem \ref{thm:Alperin-Dade} in the second equality. This forces $|\irrpsigma(B_0(N))|=3$ and then $\ksigma(B_0(PN))=9$. By induction we may assume $PN=G$. If $G=X$, by Theorem \ref{thm:Alperin-Dade} we may assume $PN=G$ aswell, and the result follows from Theorem \ref{thm:3.4 of NRSV} and the main result of \cite{eden}.

  We are left with the case that the $S_i$'s have cyclic Sylow $p$-subgroups, and let $C_1\in\Syl_p(S_1)$. Let $C_i=C_1^{y^i}\in\Syl_p(S_1^{y^{i}})$, $Q=C_1\times\dots\times C_t\in\Syl_3(N)$ where each $C_i\in\Syl_3(S_i)$ and where $Q\sbs P$, with $P/Q=\langle yQ\rangle$ cyclic. Then $\langle yQ\rangle$ permutes $\{C_1,\dots,C_t\}$ transitively, and since $C_i$ is normal in $Q$ for all $i$, we conclude that $\langle y\rangle$ permutes $\{C_1,\dots,C_t\}$ transitively. This implies that there are generators $x_1,\dots, x_t$ of $C_1,\dots, C_t$ respectively such that $\langle y\rangle$ permutes the set $\{x_1,\dots,x_t\}$ transitively. Write $z_i=(1,\dots, 1, x_i,1,\dots, 1)$. Notice that $Q=\langle z_1,\dots, z_t\rangle$. We claim that $P=\langle y, z_1\rangle$. Since $\langle y\rangle$ acts transitively on $\{x_1,\dots,x_t\}$ then it acts transitively on $\{z_1,\dots,z_t\}$. It follows that every $z_j\in\langle y,z_1\rangle$, so $Q\sbs\langle y, z_1\rangle$ and $P=\langle y,Q\rangle\sbs\langle y, z_1\rangle$. Since $P$ is not cyclic, it is $2$-generated and we are done.
\end{proof}

\section{Simple groups}\label{sec:simples}

The goal of this section is to prove the following ``$\sigma$-version" of \cite[Thm.~3.1]{Gia-Riz-Sch-Val24}.

\begin{thm}\label{thm:simple group conditions}
Let $S$ be a nonabelian simple group of order divisible by $3$ and let $X\in\Syl_3(\Aut(S))$. 
\begin{enumerate}
    \item One of the following holds.
    \begin{enumerate}
        \item There exist $1_S\neq\theta_1,\theta_2\in\irrsigma(B_0(S))$ nonconjugate in $\Aut(S)$ and invariant under $X$, or
        \item There is an $X$-invariant $1_S\neq \theta\in\irrsigma(B_0(S))$ that extends to some $\sigma$-invariant character in $B_0(T)$ for all $S\leq T\leq \Aut(S)_\theta$.
    \end{enumerate}
    \item The set of degrees of characters in $\irrsigma(B_0(S))$ has size at least $3$.
    \item If $S$ has noncyclic Sylow $3$-subgroups, then there exist $1_S\neq \theta_1,\theta_2,\theta_3\in\irrsigma(B_0(S))$ nonconjugate in $\Aut(S)$. 
\end{enumerate}
\end{thm}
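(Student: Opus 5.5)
We will go through the classification of finite simple groups and treat the families one at a time, modelling the argument on the proof of \cite[Thm.~3.1]{Gia-Riz-Sch-Val24}. The new ingredient is that every character we produce must also be $\sigma$-invariant. The basic tool for checking this is a sufficient condition: if $\chi\in\Irr_0(B_0(S))$ has values in a field $\mathbb{Q}(\zeta_m)$ with $3\nmid m$, or more generally in a field on which $\sigma$ acts trivially, then $\chi$ is $\sigma$-invariant. Rational-valued and $3$-rational characters therefore qualify automatically. For groups of Lie type, $\sigma$ sends a semisimple label $s$ to $s^{4}$ on its $3$-part, so Lusztig series $\mathcal{E}(S,s)$ with $s$ of $3'$-order are stable under $\sigma$. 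In particular every unipotent character in $B_0(S)$ of $3'$-degree is $\sigma$-invariant, except for a few cuspidal exceptions in exceptional groups, where we use the known fields of values (Geck, Lusztig). We also lean on \cite{eden}: there the almost simple version was established, so $\irrsigma(B_0(S))$ is essentially already described there, including its size.

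The plan is as follows. For sporadic groups, ${}^2F_4(2)'$, and small groups with exceptional multipliers or automorphisms, we check (i)--(iii) directly in GAP from the character tables of $S$ and of its automorphic extensions. For alternating groups $\Alt_n$ with $n\geq 5$, all characters of $\mathfrak{S}_n$ are rational, so those restricting irreducibly to $\Alt_n$ are $\sigma$-invariant. We select partitions of $3'$-degree with empty $3$-core (so they lie in the principal block), for instance hooks or two-row partitions chosen through the $3$-adic expansion of $n$ via Macdonald's criterion. Such characters are $\Aut(S)$-invariant for $n\neq 6$ and extend rationally to $\mathfrak{S}_n$. This gives (i)(b) directly, and (iii) and (ii) follow after choosing three partitions with distinct degrees.

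For groups of Lie type in defining characteristic $3$, we take $\St$-free characters: the principal block contains every character except $\St$, and $3'$-degree characters correspond to the linear characters of a Borel subgroup. We choose semisimple characters $\chi_s$ with $s$ of $3'$-order, which are rational or $3$-rational. For the remaining Lie type groups in characteristic different from $3$, we use unipotent characters of $B_0(S)$. These are exactly the unipotent characters whose $d$-Harish-Chandra series is the principal one, with $d$ the order of $q$ modulo $3$. Their $3'$-degree members are parametrized by characters of the relative Weyl group with $3'$-degree (Malle). Unipotent characters are $\Aut(S)$-invariant up to graph automorphisms and extend to $\Aut(S)_\theta$ with the same field of values (Malle, Lusztig), which gives (i)(b). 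When the Sylow $3$-subgroup is noncyclic, the relative Weyl group is a nontrivial complex reflection group of rank at least $2$. This lets us pick three $3'$-degree unipotent characters with distinct degrees, which yields (ii) and (iii); nonconjugacy follows since their degrees differ. When the Sylow $3$-subgroup is cyclic, we get (ii) from Brauer tree theory: the principal block has at least three distinct degrees among its nonexceptional $\sigma$-invariant characters.

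The main obstacle is the extendibility part of (i)(b) with $\sigma$-invariance preserved for all intermediate groups $T$, and above all when field automorphisms of $3$-power order or the graph automorphism of $D_4$ are present. In those cases $\Aut(S)_\theta/S$ is noncyclic, and a $\sigma$-invariant extension need not exist a priori. Our first approach is to take $\theta$ unipotent and use Malle--Späth's explicit extensions to $\Aut(S)_\theta$ with rational character values, which are automatically $\sigma$-invariant. The extensions lie in $B_0(T)$ by Lemma \ref{lem:princblockabove}. If that fails for a specific family, for example when the only available unipotent characters in $B_0(S)$ are cuspidal with nonrational values, we fall back to (i)(a). There we exhibit two nonconjugate $X$-invariant $\sigma$-invariant characters, using Lemma \ref{lem:p-action} to get $X$-invariance inside an $\Aut(S)$-stable set of size $\equiv 0 \bmod 3$.
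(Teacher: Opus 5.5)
\textbf{There are genuine gaps: your argument for (i) misses $\PSL_2(27)$, and your argument for (iii) misses $\PSL_3(q)$ with $3\mid q-1$.}

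\textbf{Part (i).} You claim the extensions ``lie in $B_0(T)$ by Lemma~\ref{lem:princblockabove}''. That is only valid when $T/S$ is a $3$-group (part (iv)). In general even a rational extension can miss the principal block: for $\Alt_5<\mathfrak{S}_5$ and $\theta$ of degree $4$, the extension $\chi_{(4,1)}$ has $3$-core $(1,1)$, whereas $B_0(\mathfrak{S}_5)$ has core $(2)$. The existence of rational extensions of unipotent characters to every $T$ is also asserted rather than justified.

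More seriously, neither your primary plan nor your fallback reaches the groups where (i)(b) is really needed. Take $S=\PSL_2(27)$. Here $B_0(S)=\Irr(S)\setminus\{\St\}$, so no nontrivial unipotent character is available. The field automorphism of order $3$ fixes none of the characters of degree $26$ or $28$. Hence the $X$-invariant members of $\irrsigma(B_0(S))$ are $1_S$ and the two characters of degree $13$, and these two are $\PGL_2(27)$-conjugate. So (i)(a) fails, and Lemma~\ref{lem:p-action}, which only counts $X$-fixed points modulo $3$, gives no control over $\Aut(S)$-conjugacy.

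The missing step is the one the paper uses.
\begin{enumerate}
\item Take a non-unipotent $\theta$: from \cite[Thm.~3.1]{Gia-Riz-Sch-Val24} for $\PSL_2(3^a)$ and $\PSL_3^\epsilon(3^a)$, or the Steinberg character (restricted from $\PGL_2(q)$, $\PGL_3^\epsilon(q)$) in the corresponding cases in characteristic $\neq 3$.
\item In all these cases $\Out(S)$ is abelian. For each $T$, let $M$ be the preimage of the Sylow $3$-subgroup of $T/S$.
\item Since $S$ is perfect and $3\nmid\theta(1)$, the canonical extension of \cite[Cor.~6.4]{Nav18} to $M$ is $\sigma$-invariant, and it lies in $B_0(M)$ by Lemma~\ref{lem:princblockabove}(iv).
\item Lemma~\ref{lem:MMSV3.5} then gives a $\sigma$-invariant character of $B_0(T)$ above it. This character extends $\theta$ because $T/S$ is abelian and $\theta$ extends to $T$.
\end{enumerate}
For all other groups the paper avoids extendibility by proving (i)(a). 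For instance, in characteristic $\neq 3$ it uses three $\Aut(S)$-invariant unipotent characters of $B_0(S)$ from \cite[Prop.~5.8]{Gia-Riz-Sch-Val24}, which are $\sigma$-invariant by Lemma~\ref{lem:unipsfixed}.

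\textbf{Parts (ii) and (iii).} Part (iii) asks for three \emph{nontrivial} nonconjugate characters, but three unipotent characters of distinct degrees may include $1_S$. For $\PSL_3(q)$ with $3\mid q-1$ the Sylow $3$-subgroups are noncyclic, yet the only unipotent characters are $1$, $q(q+1)$ and $q^3$. So a non-unipotent third character is required; the paper simply quotes \cite[Thm.~C(b)]{Riz-Sch-Val20}. Also, passing from ``relative Weyl group of rank at least $2$'' to three distinct $3'$-degrees in $B_0(S)$ is the real content of \cite[Prop.~5.8]{Gia-Riz-Sch-Val24}, and your remark does not prove it.

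In the cyclic case of (ii), the inertial index satisfies $e\le 2$. So once the defect group has order $>3$, $B_0(S)$ has at most two non-exceptional characters, and your sentence cannot hold as written. It is easily fixed, and then gives a neat uniform alternative to the paper's semisimple character $\chi_s$ with $|s|=3$ for $\PSL_2(q)$ and $\PSL_3^\epsilon(q)$:
\begin{enumerate}
\item Burnside's transfer theorem forces $e=2$, so the Brauer tree is a path with $1_S$ at an end.
\item $\sigma$ fixes both non-exceptional characters and exactly one exceptional one, namely the one labelled by the characters of order $3$ of the defect group.
\item Hence $\irrsigma(B_0(S))$ has one character on each vertex, of degrees $1$, $\varphi(1)$ and $1+\varphi(1)$. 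Here $\varphi$ is the Brauer character on the edge away from $1_S$, and $\varphi(1)>1$ because $S$ is perfect.
\end{enumerate}

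\textbf{Minor slips.} $B_0(\mathfrak{S}_n)$ consists of the partitions with the same $3$-core as $(n)$, not the empty core. In characteristic $3$, semisimple characters need not be $3$-rational: the degree-$13$ characters of $\PSL_2(27)$ take the values $\frac{-1\pm\sqrt{-27}}{2}$. They are still $\sigma$-invariant, by Lemma~\ref{lem:defcharsigfixed}, or simply because $\sigma$ has $3$-power order and hence fixes every quadratic irrationality.
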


We remark that part (iii) of Theorem \ref{thm:simple group conditions} is already found as \cite[Thm.~C(b)]{Riz-Sch-Val20}.
In addition to Theorem \ref{thm:simple group conditions}, we will need the main result from \cite{eden}, restated here:

\begin{thm}[E. Ketchum]\label{thm:almost simple}
Let $A$ be an almost simple group,  and let $P\in\Syl_3(A)$. Then $\ksigma(B_0(A))\in\{6,9\}$ if and only if $|P:\Phi(P)|=9$.
\end{thm}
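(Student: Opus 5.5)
This statement is Ketchum's result from \cite{eden}; the paper quotes it rather than proving it, so what follows is my own outline of how it could be proved. The method is a case-by-case analysis via the classification of finite simple groups. Write $A$ with $S\le A\le\Aut(S)$. The first reduction removes $3'$-layers. Let $M=S\,\cent A P$, which is normal in $A$ because $A=S\norm A P$ by the Frattini argument. The idea is to use Theorem~\ref{thm:Alperin-Dade}, and where necessary Proposition~\ref{thm:ppalblock gam2}, to reduce the count $\ksigma(B_0(A))$ to $\ksigma(B_0(SP))$, corrected by the $3'$-quotient $A/M$. The Sylow condition is unaffected: $P\cap S$ is a Sylow $3$-subgroup of $S$, and $P/(P\cap S)$ embeds in the $3$-part of $\Out(S)$, which is small and, for most families, cyclic.

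Both conditions are then checked family by family.

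\textbf{Sporadic and Tits groups.} These are finite in number. Both $\ksigma(B_0(A))$ and $|P:\Phi(P)|$ can be read off from the known character tables and $3$-block data in GAP. The computation uses the fact that $\sigma$ acts on the values of $3'$-degree characters by $\zeta_{3^a}\mapsto\zeta_{3^a}^4$.

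\textbf{Alternating groups.}
\begin{itemize}
\item $P$ is a product of wreath products of $C_3$, so $|P:\Phi(P)|=9$ forces $n\in\{6,7,8\}$, together with the small cases $\sym_n$.
\item On the character side, the height-zero characters of $B_0$ are labelled by $3$-core-compatible partitions via Macdonald's $3'$-degree criterion. All irreducible characters of $\sym_n$ are rational, so every one of them is $\sigma$-invariant.
\item For $\Alt_n$, only the split pairs can fail to be $\sigma$-invariant. These have values in $\mathbb{Q}(\sqrt{\pm\prod h_i})$, so the check reduces to whether $\sigma$ fixes that square root.
\item For large $n$, a crude lower bound on the number of such characters exceeds $9$.
\end{itemize}

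\textbf{Groups of Lie type in characteristic $3$.} Here $B_0$ contains all characters except the Steinberg character. The $3'$-degree characters are parametrized by semisimple classes in the dual group and linear characters of a Borel subgroup. So $\ksigma$ can be counted through the action of $\sigma$ on these parameters, essentially the $\sigma$-fixed $3'$-characters of $U\rtimes T$.
\begin{itemize}
\item $|P:\Phi(P)|=9$ happens only in rank one over $\mathbb{F}_9$ or in very small cases, since $|U:\Phi(U)|=q^{\text{rank}}$ up to field-automorphism contributions.
\item The explicit counts are then compared.
\end{itemize}

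\textbf{Groups of Lie type in characteristic $\ne 3$.} This is where I expect the main obstacle. I would use the Broué--Michel description: $\Irr(B_0(S))$ is the union of Lusztig series $\mathcal{E}(S,t)$ with $t$ a $3$-element of the dual group. Height-zero characters are then described via $d$-Harish-Chandra theory, where $d$ is the order of $q$ modulo $3$. Following Malle and Späth, these correspond to characters of the relative Weyl group $W_d$ of a Sylow $d$-torus, paired with $3$-elements of the torus $T_d$.

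The action of $\sigma$ is handled in three pieces:
\begin{itemize}
\item It permutes the $3$-elements $t\mapsto t^4$.
\item It fixes most unipotent characters.
\item The known results on fields of values of unipotent characters (Geck, Lusztig) handle the exceptions.
\end{itemize}
Consequently, $\ksigma(B_0(S))$ equals the number of $W_d$-orbits of $3$-elements $t\in T_d$ fixed by $t\mapsto t^4$, i.e.\ those in $\Omega_1$ of the relevant torus, weighted by the number of $3'$-degree characters of $W_d(t)$.

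The target inequality is: whenever $|P:\Phi(P)|\ge 27$, i.e.\ the Sylow $d$-torus has $3$-rank at least $3$ or the field/graph automorphisms add generators, the count exceeds $9$. The small-rank cases ($\PSL_2,\PSL_3,\PSU_3,G_2,{}^3D_4,\dots$) would be computed explicitly to see that the count is exactly $6$ or $9$ precisely when the rank is $2$.

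The delicate step is the extension to $A$ when $3\mid|A:S|$, via field or graph-type automorphisms of order $3$. There I would:
\begin{itemize}
\item use Lemma~\ref{lem:p-action} and Proposition~\ref{pro:noelia relative sigma} to show that each $P$-invariant $\theta\in\irrsigma(B_0(S))$ contributes a multiple of $3$ characters of $B_0(A)$;
\item check that the number of $\Aut(S)$-orbits of such $\theta$ equals $2$ or $3$ exactly when $|P:\Phi(P)|=9$.
\end{itemize}
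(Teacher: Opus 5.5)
\textbf{There is a genuine gap: your outline misidentifies which almost simple groups have $|P:\Phi(P)|=9$ in every infinite family.} The paper does not prove this statement; it imports it from \cite{eden}, so your outline must stand on its own. It breaks exactly where such a proof does its work.

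\emph{Alternating groups.} A Sylow $3$-subgroup of $\mathfrak{S}_n$ is $\prod_i (C_3\wr\cdots\wr C_3)^{a_i}$, where $n=\sum_i a_i3^i$ and the $i$-th factor has $i$ copies of $C_3$. It needs $\sum_i i\,a_i$ generators. So $|P:\Phi(P)|=9$ exactly for $6\le n\le 11$, with $P\cong C_3\wr C_3$ when $9\le n\le 11$, not only for $n\in\{6,7,8\}$.

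\emph{Defining characteristic.} Field automorphisms can \emph{lower} the Frattini index. For $A=\PSL_2(3^{3^k})\rtimes\langle\phi\rangle$ with $\phi$ a field automorphism of order $3^k$, the normal basis theorem gives $P\cong C_3\wr C_{3^k}$, which is $2$-generated. This is an infinite family outside ``rank one over $\mathbb{F}_9$ or very small cases''.

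\emph{Non-defining characteristic (the most serious error).} Your equivalence ``$|P:\Phi(P)|\ge 27$ iff the Sylow $d$-torus has $3$-rank at least $3$ (or automorphisms add generators)'' is false. A Sylow $3$-subgroup $R$ of $W_d$ acts on the torus part $T_3$ of $P$ and cuts down the number of generators. Take $S=\PSL_4(q)$ with $3\mid q-1$ and $3^e=(q-1)_3$. Then $T_3=\{\mathrm{diag}(x_1,\dots,x_4): x_i^{3^e}=1,\ x_1x_2x_3x_4=1\}\cong C_{3^e}^3$ and $R=\langle(1\,2\,3)\rangle$. One gets $P'=[T_3,R]\cong C_{3^e}^2$ and $P/P'\cong C_{3^e}\times C_3$. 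So $|P:\Phi(P)|=9$ even though the torus has $3$-rank $3$. For $q\equiv 4,7\pmod 9$ even $|P:P'|=9$, so \cite{Gia-Riz-Sch-Val24} gives $k_0(B_0(S))\in\{6,9\}$. Hence $\ksigma(B_0(S))\in\{6,9\}$: it is at most $k_0(B_0(S))$, divisible by $3$ by Lemma~\ref{lem:p-action}, and not $3$ since $P$ is noncyclic \cite{Riz-Sch-Val20}. Your target inequality is therefore false for these groups.

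\textbf{The character-side tools also need correcting.}
\begin{itemize}
\item By Brou\'e--Michel, $\bigcup_t\mathcal{E}(S,t)$ over $3$-elements $t$ is the union of \emph{all} unipotent $3$-blocks, not $\Irr(B_0(S))$. For instance, the unipotent character of degree $6$ of $\PSL_3(2)$ has $3$-defect zero. Isolating $B_0$ needs Cabanes--Enguehard, and Enguehard's work for the bad prime $3$.
\item A Lusztig series is $\sigma$-stable as soon as $t^4$ is conjugate to $t$. This does not force $t\in\Omega_1$.
\item In your last step, Proposition~\ref{pro:noelia relative sigma} only says that each $A$-orbit (not $\Aut(S)$-orbit) of $P\times\langle\sigma\rangle$-invariant $\theta$ contributes a multiple of $3$. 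Counting orbits therefore does not determine $\ksigma(B_0(A))$; you must compute each $\ksigma(B_0(A)|\theta)$.
\item $A=S\norm A P$ is not the Frattini argument unless $SP\normal A$. It fails for $A=\mathrm{P}\Omega_8^+(q).\mathfrak{S}_4$ with $q$ odd.
\end{itemize}

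\textbf{A simplification you missed.} $\sigma$ fixes $\zeta_3$ and all $3'$-roots of unity, hence every quadratic irrationality. So all characters of $\mathrm{Alt}_n$ and $\mathfrak{S}_n$ are $\sigma$-invariant. Since $P/P'$ is elementary abelian there, the case $A\in\{\mathrm{Alt}_n,\mathfrak{S}_n\}$ follows directly from \cite{Gia-Riz-Sch-Val24}.
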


\subsection{Previous Results}

We begin by recording some previous results that will be useful for proving Theorem \ref{thm:simple group conditions}.

\begin{lem}\label{lem:MMSV3.5}
Let $N\lhd G$ with $3\nmid [G:N]$. 
\begin{itemize}
    \item If  $\theta \in \irr N$ and $\chi \in \irr {G\mid N}$, then
$\chi$ is $\sigma$-fixed if and only if $\theta$ is 
$\sigma$-fixed.
\item If $\theta\in\Irr_{\sigma}( {B_0(N)})$, then there is some $\chi\in\Irr_{\sigma}(B_0(G)|\theta)$.
\end{itemize}

\end{lem}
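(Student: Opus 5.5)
The plan is to treat the two bullet points separately. Both rest on the fact that $\sigma$ has $3$-power order, while $|G:N|$ is prime to $3$.

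For the first bullet, let $\chi\in\Irr(G)$ lie over $\theta\in\Irr(N)$. By Clifford's theorem, $\chi_N=e\sum_{i=1}^{k}\theta_i$, where $\theta_1=\theta$ and $k=|G:G_\theta|$ divides $|G:N|$, so $3\nmid k$.

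First suppose $\chi$ is $\sigma$-fixed. Then $\sigma$ permutes the $G$-orbit $\{\theta_1,\dots,\theta_k\}$. Since $\langle\sigma\rangle$ acts as a $3$-group on a set of size prime to $3$, it fixes some $\theta_i=\theta^g$. Because $\sigma$ commutes with conjugation, $\theta=(\theta^g)^{g^{-1}}$ is also $\sigma$-fixed.

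Conversely, suppose $\theta$ is $\sigma$-fixed. Then $\sigma$ permutes $\Irr(G|\theta)$. The plan is to reduce to $G_\theta$ and use the Clifford correspondence, which commutes with $\sigma$, exactly as in the proof of Theorem \ref{thm:ppalblock gam}. So it suffices to show that $\psi\in\Irr(G_\theta|\theta)$ is $\sigma$-fixed. Here $\theta$ is $G_\theta$-invariant and $|G_\theta:N|$ is prime to $3$.

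Assume this for the moment. Then $\theta$ has a $\sigma$-invariant extension $\hat\theta$ to $G_\theta$: take the canonical extension with determinantal order prime to $3$, obtained by \cite[Cors.~6.2,~6.4]{Nav18}-type arguments applied to each Sylow $q$-subgroup for $q\neq 3$. By uniqueness of that canonical extension it is $\sigma$-fixed. Gallagher's theorem then writes $\psi=\beta\hat\theta$ with $\beta\in\Irr(G_\theta/N)$. Since $3\nmid|G_\theta/N|$, the values of $\beta$ lie in $\mathbb{Q}_{m}$ with $3\nmid m$, so $\sigma$ fixes $\beta$. Hence $\psi$, and therefore $\chi=\psi^G$, is $\sigma$-fixed.

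The main obstacle is the existence of a $\sigma$-invariant extension when $\theta$ need not extend to $G_\theta$ at all. I would try to avoid needing extensions. The cleaner route is to note that $\sigma$ fixes $\theta$ and fixes every element of $\mathbb{Q}_m$, and then argue via projective representations: the characters $\psi$ over $\theta$ are determined by a projective representation of $G_\theta/N$ whose values are $3'$-roots of unity times values of $\theta$. Alternatively, count orbits: $\sigma$ acts on $\Irr(G_\theta|\theta)$, and the multiplicities $e_\psi$ satisfy $\sum e_\psi^2=|G_\theta:N|$. I expect the cleanest completion to be a citation of \cite[Lem.~3.5]{Mar-Mar-Sch-Val24}, whose label this lemma mirrors and which handles exactly this point.

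For the second bullet, let $\theta\in\Irr_\sigma(B_0(N))$. By Lemma \ref{lem:princblockabove}(ii) there is some $\chi\in\Irr(B_0(G))$ lying over $\theta$. By the first bullet, any such $\chi$ is $\sigma$-fixed, since $\theta$ is. So $\chi\in\Irr_\sigma(B_0(G)|\theta)$, as required.
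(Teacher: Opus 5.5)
\textbf{There is a genuine gap, in the converse direction of the first bullet.} Your proof of the forward implication of the first bullet is fine: a $3$-group fixes a point of the $G$-orbit of $\theta$, whose size is prime to $3$. Your deduction of the second bullet from the first, via Lemma \ref{lem:princblockabove}(ii), is also fine. The paper itself gives no argument and simply cites \cite[Lem.~3.5, Cor.~3.6]{Mar-Mar-Sch-Val24}.

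The failing step is your construction of a $\sigma$-invariant extension $\hat\theta$ of $\theta$ to $G_\theta$ as a ``canonical extension''.
\begin{itemize}
\item In general $\theta$ need not extend to $G_\theta$ at all.
\item Canonical extensions require $(|G_\theta:N|, o(\theta)\theta(1))=1$, which is not assumed here.
\item Working Sylow by Sylow for primes $q\neq 3$ does not help, since such $q$ may divide $o(\theta)\theta(1)$.
\end{itemize}
When $\theta$ does extend, a $\sigma$-invariant extension does exist, but not by uniqueness. The reason is that $\sigma$ permutes the $|\Lin(G_\theta/N)|$ extensions, and this number is prime to $3$.

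Your fallback ideas are not carried out. The identity $\sum e_\psi^2=|G_\theta:N|$, together with the fact that non-trivial $\sigma$-orbits have size divisible by $3$, only shows that \emph{some} $\psi$ over $\theta$ is $\sigma$-fixed, not all of them. The projective-representation remark leaves out the essential point. Ending with a citation of \cite[Lem.~3.5]{Mar-Mar-Sch-Val24} is what the paper does, but then your written argument does not prove that step.

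\textbf{How to close the gap.} You never need an extension to $G_\theta$; it is enough to check $\chi^\sigma(g)=\chi(g)$ for each $g\in G$. Fix $g$ and put $H=N\langle g\rangle$.
\begin{enumerate}
\item Every irreducible constituent $\eta$ of $\chi_H$ lies over some $G$-conjugate $\theta^x$ of $\theta$, and $\theta^x$ is $\sigma$-fixed.
\item Put $T=H_{\theta^x}$. Then $T/N$ is cyclic of $3'$-order, so $\theta^x$ extends to $T$.
\item Since $\sigma$ permutes the $|T:N|$ extensions of $\theta^x$ and acts as a $3$-group, some extension $\hat\theta$ is $\sigma$-fixed.
\item By Gallagher, every character of $T$ over $\theta^x$ has the form $\beta\hat\theta$ with $\beta\in\Irr(T/N)$. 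Each such $\beta$ has values in $\mathbb{Q}_{|T:N|}$ and is therefore $\sigma$-fixed.
\item By the Clifford correspondence, $\eta$ is $\sigma$-fixed.
\end{enumerate}
Hence $\chi_H$ is $\sigma$-fixed, and so $\chi^\sigma(g)=\chi(g)$.

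Alternatively, you can finish the projective-representation route. Choose the factor set on $G_\theta/N$ with values in $|G_\theta:N|$-th roots of unity, so that $\sigma$ fixes it. Then $\psi^\sigma=\mu\psi$ for a single $\mu\in\Lin(G_\theta/N)$ of $3'$-order. Since $\sigma$ has $3$-power order, this forces $\mu\psi=\psi$.
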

\begin{proof}
This is from \cite[Lem.~3.5, Cor.~3.6]{Mar-Mar-Sch-Val24}.
\end{proof}

Assume $S=G/\zent{G}$ is a simple group of Lie type, with $G=\bG^F$, where $\bG$ is a simple, simply connected algebraic group defined over a field of characteristic $q_0$ and $F\colon \bG\rightarrow\bG$ is a Steinberg endomorphism. Let $(\bG^\ast, F)$ be dual to $(\bG, F)$ and write $G^\ast:=(\bG^\ast)^F$.

\begin{lem}\label{lem:defcharsigfixed}
Let $S$ be as above with $q_0=p\geq 3$.  Then every $\chi\in\irr{S}$ is $\sigma$-invariant.
\end{lem}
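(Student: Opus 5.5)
The plan is to show that every $\chi\in\Irr(S)$ has values in a field of the form $\QQ(\zeta_m,\zeta_p)$ with $p\nmid m$. This suffices because $\sigma$ fixes every $p'$-root of unity and sends $\zeta_p\mapsto\zeta_p^{1+p}=\zeta_p$. So $\sigma$ is the identity on $\QQ(\zeta_m,\zeta_p)$, hence on $\QQ(\chi)$, and $\chi^\sigma=\chi$.

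First reduce from $S$ to $G$. Since $S=G/\zent G$, $\Irr(S)$ is identified by inflation with the characters of $G$ having $\zent G$ in their kernel, and inflation commutes with Galois action. So it suffices to prove that $\QQ(\chi)\sbs\QQ(\zeta_m,\zeta_p)$ with $p\nmid m$ for every $\chi\in\Irr(G)$, where $G=\bG^F$ is defined in characteristic $p$.

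Next, take the Jordan decomposition of characters together with the Deligne--Lusztig character formula. For a Deligne--Lusztig character $R_T^G(\theta)$, the formula writes its value at $g=su$ (with $s$ semisimple of $p'$-order and $u$ unipotent) as a rational combination of Green function values $Q^{\bfC^\circ(s)}_{xTx^{-1}}(u)\in\ZZ$ and values $\theta(x^{-1}sx)$. The latter are $p'$-roots of unity because $s$ is a $p'$-element. Hence every uniform function with coefficients in $\QQ(\zeta_{|G|_{p'}})$ has values in $\QQ(\zeta_{|G|_{p'}})$. An arbitrary $\chi\in\Irr(G)$ is not uniform, so this alone is not enough.

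For the non-uniform part I will invoke the known description of character fields of finite reductive groups in defining characteristic (Geck, and Lusztig's results on character sheaves; see also the work of Schaeffer Fry--Taylor on Galois action). It states that $\QQ(\chi)$ is contained in $\QQ(\zeta_{|G|_{p'}})$ adjoined with the fields of values of the unipotent characters of the relevant centralizers in $G^\ast$. In turn, the fields of values of unipotent characters are, by Lusztig and Geck's tables, among $\QQ$, $\QQ(\sqrt{\varepsilon q})$, $\QQ(\zeta_3)$, $\QQ(i)$ and $\QQ(\zeta_5)$. Every one of these lies in $\QQ(\zeta_m,\zeta_p)$ with $p\nmid m$. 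Indeed, $\sqrt{\varepsilon q}\in\QQ(\zeta_4,\zeta_p)$ via Gauss sums. Moreover, $\zeta_3$ and $\zeta_5$ are either $p'$-roots of unity or equal to $\zeta_p$, the latter exactly in the bad cases $p=3$ (for $G_2,F_4,E_6,E_7$) and $p=5$ (for $E_8$). In all cases the conductor of $\QQ(\chi)$ has $p$-part at most $p$, and $\sigma$ acts trivially by the first paragraph.

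The main obstacle is this last step: controlling the $p$-part of the character field for non-uniform characters, especially cuspidal unipotent characters at bad primes. There the values are not given by the Deligne--Lusztig formula and depend on the characteristic-$p$ geometry, such as Kawanaka's generalized Gelfand--Graev characters and character sheaves. I would first try to cite the explicit character-field results directly. Failing that, I would argue that $\sigma$ restricted to $\QQ(\zeta_{|G|})$ fixes every $\QQ(\zeta_m,\zeta_p)$, and then check using the explicit lists of unipotent character fields in the exceptional types that no $\zeta_{p^2}$ is needed. In those types the unipotent elements of order $p^2$ are the only possible source of larger $p$-power roots of unity.
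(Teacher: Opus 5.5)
Your preliminary steps are sound. The criterion $\mathbb{Q}(\chi)\subseteq\mathbb{Q}(\zeta_m,\zeta_p)$ with $p\nmid m$ is in fact equivalent to $\sigma$-invariance, and the passage from $S$ to $G$ is harmless. The character formula does show that uniform functions have $p'$-cyclotomic values, so $\chi$ and $\chi^\sigma$ always have the same uniform projection.

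The gap is the step you flag yourself, and it is worse than a missing citation. The ``known description'' of character fields you invoke is not a theorem for the groups in this lemma, and as stated it is false. Here $\bG$ is simply connected, so $\bG^\ast$ is adjoint and centralizers of semisimple elements of $G^\ast$ are often disconnected. The corresponding Lusztig series then contain several characters with the same uniform projection, and their fields are not governed by unipotent characters of any centralizer.

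Already $G=\SL_2(p)$ with $p\equiv 3\pmod 4$ is a counterexample. The two characters of degree $(p-1)/2$ lie in $\mathcal{E}(G,s)$ with $s\in\PGL_2(p)$ of order $2$, and the centralizer of $s$ has a torus as identity component. So the only relevant unipotent characters, including their extensions to the component group, are rational. Yet the field of values of these two characters is $\mathbb{Q}(\sqrt{-p})$, which is ramified at $p$ and hence not inside $\mathbb{Q}(\zeta_{|G|_{p'}})$.

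Galois-equivariance of Jordan decomposition (Srinivasan--Vinroot) is only available for connected center. Your argument therefore leaves uncontrolled every character in a series with disconnected centralizer, not just the cuspidal unipotent characters at bad primes, and tables of unipotent character fields cannot reach these. The paper does not go this way at all. It deduces the lemma directly from Tiep--Zalesskii \cite[Thm.~1.3 and Prop.~10.12]{TZ04}, whose results on unipotent elements and realization fields in defining characteristic supply exactly the missing control.

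If you want to complete your own route, pass to a regular embedding $G\le\tilde G=\tilde{\bG}^F$ with $\zent{\tilde{\bG}}$ connected. For $\tilde G$, combine the Galois-equivariant Jordan decomposition of \cite{SV20} with the known fields of unipotent characters in defining characteristic (Lusztig, \cite{Geck03}). Note that $\sigma$ fixes each semisimple $s$, since $s$ is a $p'$-element. This gives $\tilde\chi^\sigma=\tilde\chi$ for all $\tilde\chi\in\Irr(\tilde G)$, once you check that the exceptional cases in those results are harmless for this particular $\sigma$.

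Now take $\chi\in\Irr(G)$ under $\tilde\chi$. Then $\chi^\sigma$ lies in the $\tilde G$-orbit of $\chi$. The length of that orbit divides $|\tilde G:G\zent{\tilde G}|$, which is prime to $p$ because $\zent{\bG}$ is a finite $p'$-group. Since $\sigma$ has $p$-power order on $\mathbb{Q}_{|\tilde G|}$, it fixes some $\chi^t$. Because Galois action commutes with conjugation, it then fixes $\chi$ itself.
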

\begin{proof}
This follows from \cite[Thm. 1.3 and Prop. 10.12]{TZ04}.
\end{proof}

\begin{lem}\label{lem:unipsfixed}
Assume $S$ is as above with $q_0\neq p$ and $p\geq 3$. Then any unipotent character of $S$ is $\sigma$-invariant.
\end{lem}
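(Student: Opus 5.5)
The plan is to work with the Deligne--Lusztig parametrization of unipotent characters and their character fields. The key fact is that for a finite reductive group $G=\bG^F$ (and hence for $S$, since unipotent characters have $\zent G$ in their kernel and are exactly the inflations to $G$ of the unipotent characters of $S$), every unipotent character $\chi$ has field of values $\mathbb{Q}(\chi)$ contained in $\mathbb{Q}(q_0^{1/2}, \zeta)$, where $\zeta$ is a root of unity of small order. This is Lusztig's result, made explicit by Geck. Concretely, $\mathbb{Q}(\chi)=\mathbb{Q}$ except in the following cases:
\begin{itemize}
\item[(a)] exceptional characters of $E_7(q), E_8(q)$, where $\mathbb{Q}(\chi)=\mathbb{Q}(\sqrt{q})$ or similar;
\item[(b)] cuspidal unipotent characters of $E_8$, whose fields involve $\mathbb{Q}(\zeta_5)$ or $\mathbb{Q}(\zeta_3)$;
\item[(c)] characters of $F_4$, $G_2$ involving $\mathbb{Q}(\zeta_3)$ or $\mathbb{Q}(i)$;
\item[(d)] the Suzuki and Ree groups, where $\mathbb{Q}(\chi)\subseteq\mathbb{Q}(\sqrt{2q_0}\,$-type expressions$, i, \zeta_3)$ with $q_0\in\{2,3\}$;
\item[(e)] $E_6, {}^2E_6$ cuspidal characters, involving $\zeta_3$.
\end{itemize}
I would cite \cite{Gec03} (Geck, \emph{Character values, Schur indices and character sheaves}) or Lusztig's tables for these fields.

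Next I check $\sigma$ on each possible field. Since $\sigma$ fixes all $p'$-roots of unity, and $q_0\neq p$, it fixes $\sqrt{\pm q_0}$. Indeed, $\sqrt{\pm q_0}$ lies in $\mathbb{Q}(\zeta_{4q_0})$, which is generated by roots of unity of order prime to $p$ when $q_0\neq 2$. When $q_0=2$, $\sqrt{\pm 2}\in\mathbb{Q}(\zeta_8)$, which is again a $p'$-cyclotomic field since $p\geq3$. So $\sigma$ fixes $\mathbb{Q}(\sqrt{q})$ for every power $q$ of $q_0$.

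The remaining irrationalities are roots of unity $\zeta_m$ with $m\in\{3,4,5,8\}$ (from the cuspidal unipotents). Those of order prime to $p$ are fixed by $\sigma$. The only problematic case is $p=3$ with $\zeta_3$ appearing, which happens in $G_2$, $F_4$, $E_6$, ${}^2E_6$, $E_8$, and ${}^2F_4$. Note that $p=5$ with $\zeta_5$ in $E_8$ needs the same treatment.

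For such $\zeta_p$, I would use that $\sigma$ acts on $p$-power roots of unity by $\zeta\mapsto\zeta^{1+p}$. For $\zeta_p$ itself this gives $\zeta_p^{1+p}=\zeta_p$. So $\sigma$ fixes $\mathbb{Q}(\zeta_p)$ pointwise. Thus any field generated by $\sqrt{\pm q_0}$ and roots of unity of order dividing $p\cdot m'$ with $p\nmid m'$ is fixed by $\sigma$. That covers every case, since no unipotent character field involves $\zeta_{p^2}$.

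The main obstacle is organizing the citation of the character fields uniformly, especially for the Suzuki and Ree groups, where the fields are $\mathbb{Q}(\sqrt{2},i)$-type or $\mathbb{Q}(\sqrt{3},i)$-type (the latter when $q_0=3=p$ is excluded anyway). I would handle those by direct inspection of the known character tables, noting that only square roots of $\pm q_0$ and $\zeta_3, \zeta_4, \zeta_8$ occur. Finally, I would also remark that the statement for $S$ follows from that for $G$ via inflation, because unipotent characters of $G$ are trivial on $\zent G$.
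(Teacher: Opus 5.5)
Your argument is correct. The paper gives no proof of its own here; it simply quotes \cite[Lem.~4.7]{Mar-Mar-Sch-Val24}. You instead give a self-contained proof of that cited fact. It follows the same lines the paper itself uses for $\type{G}_2(q)$ in the proof of Corollary~\ref{cor:defchararbblocks}, where it appeals to \cite[Table~1 and Prop.~5.5]{Geck03}. Concretely, you take the character fields of unipotent characters from Lusztig and Geck, and note that $\sigma$ fixes $\mathbb{Q}_{pm}$ pointwise whenever $p\nmid m$. Hence it fixes $\sqrt{\pm q_0}\in\mathbb{Q}_{4q_0}$ and $\zeta_p$ (since $\zeta_p^{1+p}=\zeta_p$), and no $\zeta_{p^2}$ ever occurs.

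The citation buys brevity. Your route shows exactly where $p\geq 3$ enters: for $p=2$, $\sigma$ sends $i\mapsto -i$ and $\sqrt{2}\mapsto -\sqrt{2}$, so for instance $F_4[i]$ and $F_4[-i]$ would be interchanged.

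One thing to tidy up: your explicit list of irrational cases is incomplete. It omits, for example:
\begin{itemize}
\item the cuspidal characters $E_8[\pm i]$, with field $\mathbb{Q}(i)$;
\item the non-cuspidal members of the Harish-Chandra series above $E_6[\theta^{\pm1}]$ in $E_7(q)$ and $E_8(q)$, with field $\mathbb{Q}(\zeta_3)$;
\item the members of the series above $E_7[\pm\xi]$ in $E_8(q)$, where $\xi=\sqrt{-q}$, with field $\mathbb{Q}(\sqrt{-q})$.
\end{itemize}
This does not affect your conclusion. All of these fields still lie in $\mathbb{Q}(\sqrt{\pm q_0},\zeta_3,\zeta_4,\zeta_5,\zeta_8)$, and that uniform containment is what your argument actually uses. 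You should state the containment as such, with a citation to the full table, rather than rest on the partial case list.
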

\begin{proof}
    This is \cite[Lem.~4.7]{Mar-Mar-Sch-Val24}.
\end{proof}

\begin{lem}\label{lem:semisimplep}
Assume $S$ and $G$ are as above with $q_0\neq p$ and $p$ good for $G$.  Assume $s\in G^\ast$ has order $p$ and $p\nmid |\zent{G}|$. Then the semisimple character $\chi_s$ deflates to a character in $\Irr_{\sigma}(B_0(S))$.
\end{lem}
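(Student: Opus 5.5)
We need three things: that $\chi_s$ lies in $B_0(G)$, that it deflates to $S$ (so $\zent G$ lies in its kernel), and that it is $\sigma$-invariant. We take them in this order.

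\emph{Block membership.} Since $s$ has order $p$ and $q_0\neq p$, $s$ is a $p$-element of $G^\ast$. By Broué--Michel, $\mathcal{E}_p(G,1)=\bigcup_{t}\mathcal{E}(G,t)$, with $t$ running over $p$-elements of $G^\ast$ up to conjugacy, is a union of $p$-blocks. We will then use the known description of the unipotent blocks for good $p$ (Cabanes--Enguehard). It shows that the principal block contains $\mathcal{E}(G,t)\cap\Irr(B_0(G))\neq\emptyset$ for every $p$-element $t$, and contains the semisimple character $\chi_t$ in particular. Concretely, $\chi_s=\pm R_{T^\ast}^{G}(\hat s)$-type combinations. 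Hence $d^1\chi_s$ equals $d^1$ of the corresponding combination with $\hat s$ replaced by the trivial character, by Broué--Michel / Hiss. So $\chi_s$ and $1_G$ share a Brauer constituent, and $\chi_s\in\Irr(B_0(G))$. This is the step where goodness of $p$ enters.

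\emph{Deflation.} The central character of $\chi_s$ is determined by the image of $s$ under the natural map $G^\ast\to \zent{\bG^\ast}$-dual. Specifically, $\omega_{\chi_s}|_{\zent G}$ corresponds to $s$ via $\zent G\cong \Irr(G^\ast/[G^\ast,G^\ast])$-duality, using $\zent{G}\cong (\zent{\bG}^F)$ and $G^\ast/O^{q_0'}(G^\ast)$. Since $s$ has $p$-power order and $p\nmid|\zent G|$, this linear character of $\zent G$ has order dividing both $p$ and $|\zent G|$, so it is trivial. Thus $\zent G\le\ker\chi_s$, and by Lemma~\ref{lem:princblockabove}(i), together with the fact that $\zent G$ is a $p'$-group, $\chi_s$ deflates to a character of $B_0(S)$.

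\emph{$\sigma$-invariance.} Galois automorphisms act on Lusztig series by $\mathcal{E}(G,s)^\gamma=\mathcal{E}(G,s^{k})$, where $\gamma$ acts on $|s|$-roots of unity as $\zeta\mapsto\zeta^k$. This is due to Schaeffer Fry / Srinivasan--Vinroot. The semisimple character is the unique character of its series with the given degree in the regular-character decomposition, so $\chi_s^\gamma=\chi_{s^k}$. For $\sigma$ we have $k\equiv 1+p$, and $s^{1+p}=s$ since $o(s)=p$. Hence $\chi_s^\sigma=\chi_s$, and the deflation is $\sigma$-invariant as well. The degree $\chi_s(1)=|G^\ast:\cent{G^\ast}{s}|_{q_0'}$ may or may not be $p'$, but the lemma asks only for $\Irr_\sigma$, so this is not needed.

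The main obstacle is the first step, namely establishing $\chi_s\in B_0(G)$ rigorously. I would try to cite Hiss's result that $\mathcal{E}(G,s)$ for a $p$-element $s$ meets $B_0$, combined with the semisimple character being in the block of the unipotent character $1_G$ via the $d^1$ computation.
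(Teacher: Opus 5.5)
\textbf{Verdict: genuine gap in the block-membership step; the rest is fine.} Your three-part structure matches the paper's. Your deflation step is the paper's argument: the paper phrases it as $s\in[G^\ast,G^\ast]$, which is your central-character argument. Your $\sigma$-step is also fine, once you say why $\chi_s$ is well defined and uniform. The component group $A_{\bG^\ast}(s)^F$ has exponent dividing $o(s)=p$ and order dividing $|\zent{G}|$, so it is trivial (the paper handles this via \cite[Ex.~20.16]{MT11}). Then $\chi_s=\pm|W(s)|^{-1}\sum_{w\in W(s)}R_{T_w}^G(\hat s)$, whose values lie in $\QQ(e^{2\pi i/p})$, and $\sigma$ fixes that field pointwise.

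\textbf{The gap.} As you suspected, block membership is where the proof fails, and the inference you wrote there is false. Let $\rho=\pm|W(s)|^{-1}\sum_{w}R_{T_w}^G(1)$ be the virtual unipotent character obtained by replacing $\hat s$ with the trivial character. From $d^1\chi_s=d^1\rho$ you conclude that $\chi_s$ and $1_G$ share a Brauer constituent. Counterexample: take $G=\SL_2(q)$ with $p$ odd and $p\mid q+1$, and $s$ of order $p$ in the nonsplit torus. Then:
\begin{itemize}
\item $\chi_s=-R_{T_w}^G(\hat s)$ has degree $q-1$ and $\rho=\St-1_G$;
\item $d^1\chi_s=d^1\St-d^1 1_G=\varphi$ is the irreducible Brauer character of degree $q-1$, which is not a constituent of $d^1 1_G$.
\end{itemize}
For $q=5$, $p=3$ these are the characters of degree $4$ and $1$ of $\mathsf{A}_5$. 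The two characters lie in the same block only because each is linked to $\St$ on the Brauer tree. Your fallback, that $\mathcal{E}(G,s)$ meets $B_0(G)$, does not rescue this: it says nothing about which member of the series lies in $B_0(G)$. Likewise, the sentence saying the Cabanes--Enguehard description contains $\chi_t$ ``in particular'' is an assertion, not a deduction.

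\textbf{Two ways to close it.}
\begin{enumerate}
\item Cite what the paper uses, via \cite[Lem.~4.6]{Mar-Mar-Sch-Val24} and \cite[Thm.~5.1]{HSF}. These rest on \cite[Cor.~3.4]{hiss} and \cite[Thm.~21.13]{CE04}, and place the semisimple character of a $p$-element in the block of $\chi_1=1_G$.
\item Finish your $d^1$ computation by adding a linear-independence input. Since $p$ is good and $p\nmid|\zent{G}|$, the unipotent characters restricted to $p$-regular elements are linearly independent (Geck--Hiss basic sets). Write $\rho=\sum_B\rho_B$ over unipotent blocks.
  \begin{itemize}
  \item $d^1\chi_s$ is a combination of irreducible Brauer characters of $\mathrm{bl}(\chi_s)$, and Brauer characters of distinct blocks are independent. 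Hence $d^1\rho_B=0$ for every $B\neq\mathrm{bl}(\chi_s)$.
  \item Since $\langle R_T^G(1_T),1_G\rangle=1$ for every $T$, we get $\langle\rho,1_G\rangle=\pm1$. So $\rho_{B_0}\neq 0$, and by linear independence $d^1\rho_{B_0}\neq 0$.
  \end{itemize}
  Together these force $\mathrm{bl}(\chi_s)=B_0(G)$.
\end{enumerate}
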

\begin{proof}
This is exactly as in \cite[Lem.~4.6]{Mar-Mar-Sch-Val24} and \cite[Thm.~5.1]{HSF}, which uses \cite[Cor.~ 3.4]{hiss} and \cite[Thm.~21.13]{CE04}. Note that $\chi_s$ is trivial on $\zent{G}$ since $s\in [G^\ast, G^\ast]$ (see \cite[Lem.~4.4]{Nav-Tiep16}), and $\cent{\bG^\ast}{s}$ is connected, using \cite[Ex.~20.16]{MT11}.
\end{proof}

\subsection{Proof of Theorem \ref{thm:simple group conditions}}

\begin{lem}\label{lem:sporalt}
Theorem \ref{thm:simple group conditions} holds when $S$ is an alternating or sporadic group, a group of Lie type defined in characteristic $3$, or $\tw{2}\type{F}_4(2)'$. Further, (i)(a) holds unless $S\in\{\PSL_2(3^a), \PSL_3^\epsilon(3^a)\}$.
\end{lem}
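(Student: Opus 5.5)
We would prove Lemma \ref{lem:sporalt} by treating the families separately: sporadic groups together with $\tw{2}\type{F}_4(2)'$, alternating groups, and groups of Lie type in characteristic $3$. Part (iii) is already \cite[Thm.~C(b)]{Riz-Sch-Val20}, so only parts (i) and (ii) need work. In each family we exhibit explicit $\sigma$-invariant height-zero characters in $B_0(S)$.

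\textbf{Sporadic groups and $\tw{2}\type{F}_4(2)'$.} For these finitely many groups, including $\type{A}_5,\dots,\type{A}_8$ and the other small alternating groups with exceptional isomorphisms or outer automorphisms, we would use the character tables in the ATLAS/GAP together with the known $3$-block distributions. We read off the $3'$-degree characters in $B_0(S)$ whose values lie in a field fixed by $\sigma$. Since $|\Out(S)|\le 2$ here, $X\le S$, so invariance under $X$ is automatic. It then suffices to find two non-$\Aut(S)$-conjugate nontrivial such characters, which gives (i)(a), and three distinct degrees, which gives (ii). This is a finite computation.

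\textbf{Alternating groups $\type{A}_n$, $n\ge 7$.} Characters of $\type{S}_n$ are rational. Those labelled by non-self-conjugate partitions restrict irreducibly to $\type{A}_n$ and remain rational, hence $\sigma$-invariant. Here $\Aut(S)=\type{S}_n$, so again $X\le S$, and $\Aut(S)$-invariance reduces to the partition being non-self-conjugate. Using the Nakayama conjecture (the principal block is determined by the $3$-core of the trivial partition) together with the Macdonald criterion for $3'$-degree via $3$-adic expansions of $n$, we would write down explicit hook-like partitions. Candidates are $(n-1,1)$, $(n-2,2)$, $(n-2,1^2)$, adjusted according to $n$ modulo $3$ and $9$, or partitions obtained by removing $3$-hooks. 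These should supply two non-conjugate nontrivial characters and three distinct degrees. We would check the few residue cases of $n$ separately.

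\textbf{Lie type in characteristic $3$.} By Lemma \ref{lem:defcharsigfixed}, every character of $S$ is $\sigma$-invariant. In defining characteristic, $\Irr(B_0(S))$ consists of all characters except the Steinberg character, since blocks correspond to central characters. The $3'$-degree characters are the semisimple characters $\chi_s$, which have degree $|G^\ast:\cent{G^\ast}{s}|_{3'}$-type values. We would pick semisimple classes $s$ in a maximally split torus of $G^\ast$ that are stable under field automorphisms. Two such non-$\Aut(S)$-conjugate classes whose $\cent{G^\ast}{s}$ have different orders give (i)(a) and (ii). Here $X$ contains field and graph automorphisms, so we need $s$ fixed by a Sylow $3$-subgroup of $\Out$. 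We would choose $s$ with eigenvalues in $\FF_3$ or $\FF_9$ (for example $s$ of order $2$, or an involution-like element in a Levi subgroup) so that Frobenius fixes its class.

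\textbf{Main obstacle.} The hardest step is the exceptional cases $\PSL_2(3^a)$ and $\PSL_3^\epsilon(3^a)$. There, $3'$-degree characters that are invariant under $X$ (the field automorphisms of $3$-power order) are too scarce, for example $\PSL_2(3^a)$ has the degrees $q\pm1$ and $(q\pm1)/2$, so we must prove (i)(b) instead. We would take $\theta$ to be a character of degree $(q+\epsilon)/2$ or $q\pm1$ labelled by a semisimple element with eigenvalues in the prime field. We would then show that $\theta$ extends $\sigma$-invariantly to every $S\le T\le\Aut(S)_\theta$. The extension comes from the corresponding character of $\PGL$ or $\PGU$ extended by field automorphisms, which is possible since $\theta$ is rational-valued and the quotient $T/S$ is abelian or $3'$-by-cyclic. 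Lemma \ref{lem:MMSV3.5} would control $\sigma$-invariance for the $3'$-part of $T/S$. For the cyclic $3$-part, we would use a rational (Shintani-type) extension, so that the extension lies in $B_0(T)$ by Lemma \ref{lem:princblockabove}(iv).
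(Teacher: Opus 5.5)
Your plan has two genuine gaps: the defining-characteristic recipe fails for some Ree groups, and in the exceptional cases you never establish that $\theta$ extends to every $T$.

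\textbf{The defining-characteristic recipe.} The paper builds no characters here. Apart from $J_3$, which is checked in GAP, every character in $\Irr_0(B_0(S))$ for these families is already $\sigma$-invariant (Lemma~\ref{lem:defcharsigfixed} and \cite[Lem.~4.1]{Mar-Mar-Sch-Val24}). So the statement is literally \cite[Thm.~3.1]{Gia-Riz-Sch-Val24}, and \cite[Rem.~3.2]{Gia-Riz-Sch-Val24} gives (i)(a) outside $\PSL_2(3^a)$ and $\PSL_3^\epsilon(3^a)$. Your sporadic and alternating arguments are a legitimate, if longer, substitute. In defining characteristic, however, your recipe (field-stable classes in a maximally split torus with eigenvalues in $\FF_3$ or $\FF_9$) fails for the Ree groups $\tw{2}\type{G}_2(q)$, $q=3^{2m+1}$, with $2m+1$ a power of $3$, for example $q=27$:
\begin{itemize}
\item $X$ induces the field automorphism $x\mapsto x^3$.
\item The maximally split torus is cyclic of order $q-1\equiv 2$ modulo $4$, and its relative Weyl group acts by inversion. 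So its only nontrivial class stable under cubing is the involution class.
\item Sylow $2$-subgroups are elementary abelian of order $8$, so ``eigenvalues in $\FF_9$'' again gives only the unique involution class.
\item The nontrivial unipotent characters have degree divisible by $3^m$.
\end{itemize}
You therefore get only one nontrivial $X$-invariant character in $\Irr_0(B_0(S))$, of degree $q^2-q+1$, not the two that (i)(a) requires. You must use non-split tori. For $q=27$, the elements of order $7$ in the torus of order $q+1$ form a single class, because the relative Weyl group $C_6$ acts on them as $\Aut(C_7)$. This gives a second $\Aut(S)$-invariant character, of degree $(q-1)(q^2-q+1)$.

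\textbf{Extendibility in the exceptional cases.} Your mechanism is exactly the paper's: take a $\sigma$-invariant extension over the normal Sylow $3$-subgroup $M/S$ of $T/S$, then apply Lemma~\ref{lem:MMSV3.5} over $T/M$. But this yields an \emph{extension} of $\theta$ only if you already know that $\theta$ extends to $T$. In that case, since $T/S$ is abelian, every character of $T$ over $\theta$ is an extension. The paper takes this existence from \cite[Thm.~3.1]{Gia-Riz-Sch-Val24}. Your justification, ``$\theta$ is rational-valued and $T/S$ is abelian'', does not give extendibility when $T/S$ is not cyclic. This happens for $\PSL_3(3^a)$ with $a$ even, where $\Out(S)\cong C_2\times C_a$ and the character attached to $\mathrm{diag}(1,1,-1)$ is $\Aut(S)$-invariant.

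\textbf{Smaller errors.}
\begin{itemize}
\item $\PSL_3^\epsilon(q)$ has no characters of degree $(q+\epsilon)/2$ or $q\pm1$. Your semisimple label $\mathrm{diag}(1,1,-1)$ gives degree $q^2+\epsilon q+1$.
\item For $\PSL_2(3^a)$ with $a$ odd, the characters of degree $(q-1)/2$ take values involving $\sqrt{-q}$, so no rational (Shintani) extension exists. Use instead the canonical extension of \cite[Cor.~6.4]{Nav18}, which is $\sigma$-invariant because $\theta$ is.
\end{itemize}
For $\PSL_2(3^a)$ with $\theta$ of degree $(q+\epsilon)/2$, where $q\equiv\epsilon$ modulo $4$, the quotient $\Aut(S)_\theta/S$ is cyclic, and your argument does go through.
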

\begin{proof}
Let $S$ be as in the statement. If $S = J_3$, we see the statement from computation in \cite{GAP}, and we see that (i)(a)  holds in this case. So, we now assume that $S\neq J_3$. 

 Lemma \ref{lem:defcharsigfixed}  and \cite[Lem.~4.1]{Mar-Mar-Sch-Val24} give that Theorem \ref{thm:simple group conditions} is equivalent to \cite[Thm.~3.1]{Gia-Riz-Sch-Val24} in these cases whenever condition (a1) of the latter holds. Thus, by \cite[Rem.~3.2]{Gia-Riz-Sch-Val24}, we need only consider groups of the form $\PSL_2(3^a)$ and $\PSL^{\epsilon}_3(3^a)$. Further, we need only verify that condition (i)(b) of Theorem \ref{thm:simple group conditions} holds in these cases.

Let $S = \PSL_2(3^a)$ or $S = \PSL^{\epsilon}_3(3^a)$. From \cite[Thm.~3.1 and Rem.~3.2]{Gia-Riz-Sch-Val24} we obtain some $1\neq \theta \in \Irr_0(B_0(S))$, which is $X$-invariant, such that for every $S\le T\le \Aut(S)_\theta$ we have that $\theta$ extends to some character in $B_0(T)$ (in both cases $\Aut(S)_\theta/S$ is abelian, so in fact all characters lying above $\theta$ in $\Aut(S)_\theta$ are extensions). Further, we have that $\theta $ is $\sigma$-fixed by Lemma \ref{lem:defcharsigfixed}.  Let $S\le T \le \Aut(S)_\theta$. In both cases we observe that $T/S$ has a normal Sylow $3$-subgroup. Let $M$ be the preimage under the natural projection $\pi:T\rightarrow T/S$ of this Sylow $3$-subgroup. Since $S$ is perfect, we have the determinantal order $o(\theta)$ satisfies $o(\theta)=1$ and we can  use \cite[Cor.~6.4]{Nav18} to obtain a $\sigma$-fixed extension of $\theta$ in the principal block of $M$.  (Recall that since $M/S$ has $3$-power order, $B_0(M)$  is the only block above $B_0(S)$ by Lemma \ref{lem:princblockabove}.) Then Lemma \ref{lem:MMSV3.5}
gives a $\sigma$-fixed extension in the principal block of  $T$. 
\end{proof}

\begin{lem}\label{lem:nondef}
Theorem \ref{thm:simple group conditions} holds when $S$ is a group of Lie type defined in characteristic $p\neq 3$, with (i)(a) holding unless $S=\PSL_2(q)$ or $3| (q +\epsilon)$ and $S=\PSL^{\epsilon}_3(q)$. 
\end{lem}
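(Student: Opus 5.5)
We would follow the strategy of the proof of Lemma \ref{lem:sporalt} and of \cite[Thm.~3.1]{Gia-Riz-Sch-Val24}: produce suitable $\sigma$-invariant height-zero characters in $B_0(S)$, and transfer what is known from the non-Galois version. Throughout, $S=G/\zent G$ with $G=\bG^F$ defined over $\FF_q$, $q$ a power of $q_0\neq 3$. Part (iii) is already \cite[Thm.~C(b)]{Riz-Sch-Val20}, so we concentrate on (i) and (ii).

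\textbf{Unipotent characters.} These are $\sigma$-invariant by Lemma \ref{lem:unipsfixed}. By the standard results used in \cite[Sec.~3--4]{Gia-Riz-Sch-Val24}, the unipotent characters of $3'$-degree lying in $B_0(S)$ are described by the $d$-Harish-Chandra theory, where $d$ is the order of $q$ modulo $3$. The Alperin--McKay-type count of $3'$-degree unipotent characters in the principal $d$-series shows that, outside the small-rank cases, there are at least two nontrivial such characters with distinct degrees.

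\textbf{Invariance under $X$ and nonconjugacy.} Unipotent characters are $\Aut(S)$-invariant except for the well-known exceptions in types $\type{B}_2$, $\type{G}_2$, $\type{F}_4$ in characteristic $2,3,2$, and $\type{D}_4$ with graph automorphisms. Unipotent characters of distinct degree are never $\Aut(S)$-conjugate. Hence for most $S$ we obtain $\theta_1,\theta_2$ as in (i)(a), and together with $1_S$ we have three distinct degrees, giving (ii).

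\textbf{Small rank cases.} The remaining cases are $\PSL_2(q)$ and $\PSL^\epsilon_3(q)$ with $3\mid(q+\epsilon)$, where too few unipotent characters are of $3'$-degree in $B_0(S)$. For (ii) we supplement the unipotent characters with semisimple characters $\chi_s$, with $s\in G^\ast$ of order $3$. These lie in $\irrsigma(B_0(S))$ by Lemma \ref{lem:semisimplep}, provided $3\nmid|\zent G|$. When $3$ divides $|\zent G|$, as in $\SL_3^\epsilon(q)$ with $3\mid q-\epsilon$, we would instead check the generic character tables directly.

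For (i)(b) we take the $\theta$ supplied by \cite[Thm.~3.1]{Gia-Riz-Sch-Val24}: a unipotent character such as the Steinberg character, or a semisimple $\chi_s$ as above. It is $\sigma$-invariant by Lemma \ref{lem:unipsfixed} or Lemma \ref{lem:semisimplep}, and it is known to extend into $B_0(T)$ for all $S\le T\le\Aut(S)_\theta$. To make the extension $\sigma$-invariant, we argue exactly as in Lemma \ref{lem:sporalt}:
\begin{enumerate}
\item let $M/S$ be a normal Sylow $3$-subgroup of $T/S$, or otherwise reduce via a normal series of $3$-power and $3'$-index layers;
\item use $o(\theta)=1$ and \cite[Cor.~6.4]{Nav18} to obtain a $\sigma$-invariant extension to $M$, which lies in $B_0(M)$ by Lemma \ref{lem:princblockabove}(iv);
\item pass to $T$ by Lemma \ref{lem:MMSV3.5}.
\end{enumerate}

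\textbf{Main obstacle.} This is twofold. First, one must verify that $\Aut(S)_\theta/S$ has the needed structure (a normal Sylow $3$-subgroup, or an iterated $3$/$3'$ structure) when field and diagonal automorphisms of order $3$ interact, as in $\PSL_3^\epsilon(q)$. Second, one must handle the exceptional unipotent characters, for which $X$-invariance must be checked by hand, using explicit knowledge of $\Out(S)$.
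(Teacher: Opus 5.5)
Your plan has the same overall structure as the paper's proof. Your route to a $\sigma$-invariant extension in (i)(b) is a valid and slightly more direct alternative: you take the canonical extension over the Sylow $3$-part $M/S$ of $T/S$ by \cite[Cor.~6.4]{Nav18}, then apply Lemma~\ref{lem:MMSV3.5}, whereas the paper passes through $\widetilde S=\PGL_2(q)$ or $\PGL_3^\epsilon(q)$ and a rational extension via \cite[Cor.~6.6(a)]{Nav18}. As written, however, the proposal does not yet prove the lemma. In the generic case you obtain (i)(a) only ``for most $S$'': you never show that two nontrivial unipotent characters of distinct $3'$-degree in $B_0(S)$ are $X$-invariant, and triality in $\type{D}_4(q)$ does move unipotent characters. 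The ``Alperin--McKay-type count'' is also asserted rather than established. The missing input is \cite[Prop.~5.8]{Gia-Riz-Sch-Val24}. For every $S$ outside the small-rank exceptions it supplies three $\Aut(S)$-invariant unipotent characters in $B_0(S)$ of distinct $3'$-degrees. Combined with Lemma~\ref{lem:unipsfixed}, this gives (i)(a) and (ii) at once, and your second ``obstacle'' disappears.

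In the small-rank cases your first ``obstacle'' is also not a real one. Since $3\mid q+\epsilon$ forces $3\nmid q-\epsilon$, there are no diagonal automorphisms of order $3$, $\Out(S)$ is abelian, and $3\nmid|\zent G|$. So the fallback to generic character tables is never needed; $\PSL_3^\epsilon(q)$ with $3\mid q-\epsilon$ belongs to the generic case. You should commit to $\theta=\St$, which is $\Aut(S)$-invariant and extends to $\Aut(S)$ by \cite[Thms.~2.4,~2.5]{Mal08}. This extendibility, together with $T/S$ abelian and Gallagher's theorem, is exactly what makes the character from your step (3) an extension of $\theta$. Lemma~\ref{lem:MMSV3.5} alone only gives some $\sigma$-invariant character of $B_0(T)$ lying over the extension to $M$. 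Finally, Lemma~\ref{lem:semisimplep} only places $\chi_s$ in $\Irr_\sigma(B_0(S))$, so for (ii) you must also check $3\nmid\chi_s(1)$. This requires choosing $s$ with nontrivial eigenvalues $(\zeta_3,\zeta_3^2)$ and computing its centralizer. One gets $\chi_s(1)=q^3-\epsilon$ for $\PSL_3^\epsilon(q)$, and whichever of $q\pm1$ is prime to $3$ for $\PSL_2(q)$. This also shows the degree differs from $1$ and from $\St(1)$.
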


\begin{proof}
Assume
 $S$ is a simple group of Lie type defined in characteristic $p\neq 3$.

    First we consider (i). Assume $S= \PSL_2(q)$ or that $3|(q+\epsilon)$ and $S= \PSL^{\epsilon}_3(q)$. We then claim that $(b)$ holds. Take $\theta = \hat\theta_S$, with $\hat\theta$ being the Steinberg character in $\tilde{S}$, where $\wt{S}$ is $\PGL_2(q)$ and $\PGL_3^{\epsilon}(q)$ in the respective cases. Then $\theta$ is $\Aut(S)$-invariant by \cite[Thm.~2.5]{Mal08} and extends to $\Aut(S)$ by \cite[Thm.~2.4]{Mal08} and we see that $\hat\theta \in\Irr_\sigma(B_0(\wt{S}))$ by arguing as in the second and third paragraphs of the proof of \cite[Prop.~3.9]{Riz-Sch-Val20}. 
    Let $S\le T \le \Aut(S)_\theta = \Aut(S)$. We clearly have that $\theta$ extends to   $\hat \theta _{T\cap \tilde{S}} \in \Irr(B_0(T\cap \tilde{S}))$. Since the Steinberg character is rational-valued, so is $\hat\theta_{T\cap\wt{S}}$. We have that $T/(T\cap \tilde{S})$ is an abelian group and therefore has a  normal Sylow $3$-subgroup. Let $M$ be the preimage under $\pi:T\rightarrow T/(T\cap \tilde{S})$ of this Sylow $3$-subgroup. Then $B_0(M)$ is the unique block above $B_0(T\cap \wt{S})$  by Lemma \ref{lem:princblockabove} and we use \cite[Cor.~6.6(a)]{Nav18} to obtain a rational extension $\chi$ of $\hat\theta_{T\cap\tilde{S}}$ in $\Irr(B_0(M))$. Finally, using Lemma \ref{lem:MMSV3.5}, we obtain a character $\psi$ in $\Irr_{\sigma}(B_0(T)|\chi)$. Since $\psi$ lies above $\hat\theta_{T\cap \tilde{S}}$ and $T/(T\cap \wt{S})$ is abelian, it follows that $\psi$ must be an extension of $\hat\theta_{T\cap\wt{S}}$ (and hence of $\theta$), as desired. 

Now Assume $S$ is not one of the groups discussed above.  Then  \cite[Prop.~5.8]{Gia-Riz-Sch-Val24} gives $3$ $\Aut(S)$-invariant unipotent characters. Further, Lemma \ref{lem:unipsfixed} gives that these characters must be $\sigma$-fixed. Thus, (i)(a) holds for $S$. 

We next show (ii).  Note that $3$ does not divide the size of $\tw{2}\type{B}_2(q)$, so we may assume $S\neq \tw{2}\type{B}_2(q)$.  
If $S\neq \PSL_2(q)$ and $S \neq \PSL^{\epsilon}_3(q)$ with $3\mid(q+\epsilon)$, then \cite[Prop.~5.8]{Gia-Riz-Sch-Val24} gives $3$ unipotent characters with distinct $3'$ degrees lying in the principal block, and these characters are $\sigma$-fixed by Lemma \ref{lem:unipsfixed}.

So, now assume $S= \PSL_2(q)$ or that $3 \mid (q+\epsilon)$ and  $S= \PSL^{\epsilon}_3(q)$. In these cases let  $G = \SL_2(q)$ and $G = \SL^{\epsilon}_3(q)$ respectively. We also let $\wt G =  \GL_2(q)$, resp. $\wt G = \GL^{\epsilon}_3(q)$ and $G^\ast = \PGL_2(q)$, resp. $ \PGL^{\epsilon}_3(q)$. Let $s\in  G^*$ be a semisimple element whose preimage under the canonical projection map $\pi :\tilde{G}\rightarrow G^*$  has nontrivial eigenvalues $(\zeta_3,\zeta_3^2)$ for a primitive third
root of unity $\zeta_3$. Consider a semisimple character  $\chi_s \in\mathcal{E}(G,s)$. Lemma \ref{lem:semisimplep} gives that $\chi_s$ deflates to a character in $\Irr_{\sigma}(B_0(S))$. Further, this character is height-zero by the degree properties of semisimple characters, which we see by computing the centralizer explicitly in $\wt{G}$.  Then the deflation of $\chi_s$ alongside the $2$ unipotent characters given in \cite[Prop.~5.8]{Gia-Riz-Sch-Val24} yield three $\sigma$-fixed characters in $B_0(S)$ with distinct $3'$-degrees, as desired. 
\end{proof}

Theorem \ref{thm:simple group conditions} now follows from Lemmas \ref{lem:sporalt} and \ref{lem:nondef}, recalling that part (iii) is \cite[Thm.~C(b)]{Riz-Sch-Val20}.

\subsection{Related Results}
We end by making some observations to give evidence of the analogue of Theorem \ref{thm:A} for arbitrary blocks. We first obtain the statement for groups of Lie type defined in characteristic $3$ whose ambient algebraic group has connected center.

\begin{cor}\label{cor:defchararbblocks}
Let $G=\bG^F$ be a group of Lie type, where $\bG$ is a connected reductive group over $\overline{\mathbb{F}}_3$ such that $\Z(\bG)$ is connected. Let $B$ be a $3$-block of $G$ with defect group $D$. Then $[D:\Phi(D)]=9$ if $|\Irr_{0,\sigma}(B)|\in\{6,9\}$.
\end{cor}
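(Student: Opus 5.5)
The plan is to reduce the statement to a counting argument on all height-zero characters, using the fact that in defining characteristic $3$ every relevant character is $\sigma$-invariant. First I will use the block theory of groups of Lie type in defining characteristic (Humphreys' theorem) when $\Z(\bG)$ is connected. A $3$-block of $G$ is then either of defect zero, or of full defect with defect group a Sylow $3$-subgroup $U$ of $G$, the unipotent radical of an $F$-stable Borel subgroup. Blocks of defect zero have $D=1$, so $|D:\Phi(D)|=1\neq 9$. For these blocks $\Irr_{0,\sigma}(B)$ consists of the single character of the block. This single character is $\sigma$-invariant by the argument below, so $|\Irr_{0,\sigma}(B)|=1\notin\{6,9\}$ and the statement holds vacuously. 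We may therefore assume $D=U\in\Syl_3(G)$.

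Next I will show that every character of $G$ is $\sigma$-invariant, so that $\Irr_{0,\sigma}(B)=\Irr_0(B)$. This is the analogue of Lemma \ref{lem:defcharsigfixed} for $G$ rather than for the simple quotient. The route is the same as there: by \cite{TZ04}, irreducible characters of groups of Lie type in characteristic $q_0=p$ have values in fields where $\sigma$ acts trivially. Concretely, $\sigma$ fixes $p'$-roots of unity, and the $p$-part of character fields is controlled by Gelfand--Graev data. The results of \cite[Thm.~1.3, Prop.~10.12]{TZ04} apply to connected reductive groups with connected center. The hypothesis that $\Z(\bG)$ is connected is exactly what makes this work, since then no field extensions come from disconnected-center phenomena. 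This is the step I expect to be the main obstacle: one must check carefully that the cited rationality results cover general connected reductive $\bG$ with connected center, including twisted types, and not only the simply connected simple groups used in Lemma \ref{lem:defcharsigfixed}.

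With this in hand, the hypothesis becomes $|\Irr_0(B)|\in\{6,9\}$ for a block $B$ of full defect. I will then use the known structure of full-defect blocks in defining characteristic with connected center. Each such block is parametrized by a semisimple class and is Morita/perfectly related (Broué-type, or directly via Humphreys and the descriptions of $\Irr_0(B)$) to the principal block situation. In particular $|\Irr_0(B)|=|\Irr_0(B_0(G))|$, or at least both are controlled by $\norm G U=B$, the Borel subgroup. I would invoke the known validity of the Alperin--McKay conjecture in this setting, and more precisely that $|\Irr_0(B)|$ equals the number of height-zero characters of the Brauer correspondent in the Borel subgroup $B=U\rtimes T$. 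The equivalence $|\Irr_0(B)|\in\{6,9\}\iff |D:D'|=9$ is the result of \cite{Gia-Riz-Sch-Val24} for principal blocks, and its block-theoretic analogue in this setting follows the same way. From $|U:U'|=9$ we get $|U:\Phi(U)|\le 9$. Moreover $U$ is not cyclic, since a cyclic $U$ would give $3$ characters, so $|U:\Phi(U)|=9$. Alternatively, I can count directly: $\Irr_0(B)$ corresponds to linear characters of $U$ in a $T$-orbit structure, which forces the rank of $U/\Phi(U)$ to be $2$, giving types $\SL_2(3^a)$ or $\SL_3^\epsilon(3^a)$-like with $a=1$.
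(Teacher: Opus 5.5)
There is a genuine gap in your third step: you never validly pass from an arbitrary full-defect block $B$ to $B_0(G)$, or to any setting where a known theorem applies.

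Your reduction to full-defect blocks via Humphreys is fine. The $\sigma$-invariance step is also less of an obstacle than you expect. You only need it for height-zero characters. In a full-defect block these are (away from $\mathrm{G}_2$-components) the semisimple characters $\chi_s$, and they are $\sigma$-fixed by \cite{SV20} because every semisimple $s\in G^\ast$ is a $3'$-element.

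The problems in the third step are these:
\begin{enumerate}
\item Full-defect blocks are not ``parametrized by a semisimple class''. By \cite{humphreys} they are labelled by $\theta\in\Irr(\mathbf{Z}(G))$, and each one is a union of many Lusztig series.
\item There is no Morita or perfect equivalence with $B_0(G)$ that you can cite. Such a block need not even be a twist of $B_0(G)$ by a linear character: in $\mathrm{GL}_2(3)$ the full-defect block that is faithful on $\mathbf{Z}(G)=\{\pm I\}$ is not, since $\det$ is trivial on $\mathbf{Z}(G)$.
\item The claim that the non-principal analogue of \cite{Gia-Riz-Sch-Val24} ``follows the same way'' is unfounded, because that reduction uses principal-block tools throughout.
\item The appeal to Alperin--McKay is left dangling. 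You never show that the height-zero count in the Brauer correspondent of $B$ in $\mathbf{N}_G(U)=UT$ is independent of $\theta$.
\end{enumerate}

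The missing equality $|\Irr_0(B)|=|\Irr_0(B_0(G))|$ is exactly what the paper proves. It identifies $\Irr_0(B)$ with the semisimple characters lying over $\theta$. It counts these as the semisimple classes of $G^\ast$ in the corresponding coset of $G^\ast/\mathbf{O}^{3'}(G^\ast)$. It then uses \cite{FG12} to see that all cosets contain the same number of semisimple classes, so Theorem~\ref{thm:A} applies to $G$. The $\mathrm{G}_2$-components, which carry extra unipotent characters of $3'$-degree, are handled separately.

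Your Alperin--McKay route could be completed instead, but you would need two things. First, a reference for that conjecture in defining characteristic. Second, the computation via Clifford theory in $UT/U'=(U/U')\rtimes T$: the number of characters over $\theta$ is $\sum_\lambda|T_\lambda:\mathbf{Z}(G)|$ over $T$-orbit representatives $\lambda\in\Irr(U/U')$, which does not depend on $\theta$.

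Your final deduction also rests on a false statement. A cyclic Sylow $3$-subgroup does not force three height-zero characters in the principal block: for $P\cong C_9$, Dade's theory gives $9$ or $6$. This is precisely why \cite{Gia-Riz-Sch-Val24} detects $|P:P'|$ rather than $|P:\Phi(P)|$. In your setting it is repairable. Once all height-zero characters of $B_0(G)$ are $\sigma$-fixed, \cite[Thm.~A]{Riz-Sch-Val20} rules out a cyclic $P$; alternatively you can simply apply Theorem~\ref{thm:A}, as the paper does.

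The ``direct count'' alternative is only a sketch, and its stated conclusion is off. For instance, $\mathrm{PGL}_2(9)$ (so $a=2$) has $U\cong C_3\times C_3$ and exactly $9$ height-zero characters in its unique full-defect block.
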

\begin{proof}

 By a result of Dagger and Humphreys, the only blocks with positive defect have maximal defect, and these are indexed by the characters of $\zent{{G}}$ (see \cite{humphreys}). 
 First assume $\bG$ has no component of type $\type{G}_2$. Then any block $B$ with positive defect satisfies $\Irr_{0}(B)=\Irr_{3'}(B)$ is comprised of semisimple characters in $B$, using the degree properties of Jordan decomposition and that nontrivial unipotent characters are divisible by $3$ by \cite[Thm.~6.8]{Malle07}. 

Since $\Z(\bG)$ is connected and the semisimple elements of ${G}^\ast$ have order prime to $3$ and $\sigma$ fixes $3'$-roots of unity, we then have $\Irr_0(B)=\Irr_{0,\sigma}(B)$ using the main result of \cite{SV20}.  
Assume that $|\Irr_{0,\sigma}(B)|\in\{6,9\}$ and let $B$ be the block indexed by the character $\theta\in\Irr(\Z(G))$.
Now, the number of semisimple characters in $\Irr(B)$ is the number of $G^\ast$-classes of semisimple elements $s\in G^\ast$ such that the characters in $\mathcal{E}(G, s)$ lie above $\theta$. But as noted in \cite[Prop.~2.7(iii)]{SFT23} (see also the proofs of \cite[Lem.~4.4]{Nav-Tiep16}, \cite[Prop.~3.7]{MNST}) this is the number of semisimple classes in the corresponding coset in $G^\ast/\mathbf{O}^{3'}(G^\ast)$. Now, for each semisimple $s\in G^\ast$, there is some $T^\ast:=(\bg{T}^\ast)^F$ containing $s$, where $\bg{T}^\ast\leq \bG^\ast$ is some $F$-stable maximal torus. By \cite[Rem.~1.5.13(b, c)]{GM20}, $T^\ast$ induces all cosets of $G^\ast/\mathbf{O}^{3'}(G^\ast)$.  Then by \cite[Cor.~2.4]{FG12}, we see each such coset contains the same number of semisimple classes. This means that $|\Irr_0(B)|=|\Irr_0(B_0(G))|$. Recalling that $\Irr_0(B_0(G))=\Irr_{0,\sigma}(B_0(G))$ as before, we therefore have $|\Irr_{0,\sigma}(B_0(G))|\in\{6,9\}$. Then $[P:\Phi(P)]=9$ by Theorem \ref{thm:A}. 

Finally, suppose that $\bG$ has components of type $\type{G}_2$. Here by \cite[Thm.~6.8]{Malle07}, the group $\type{G}_2(q)$ (where $q$ is a power of $3$) has 7 unipotent characters in $\Irr_{3'}(\type{G}_2(q))$. The unipotent characters of $\type{G}_2(q)$ are all $\sigma$-stable, using \cite[Table 1 and Prop.~5.5]{Geck03}. Note that these 7 unipotent characters then lie in $\Irr_{0,\sigma}(B_0(\type{G}_2(q)))$ since $\Z(\type{G}_2)$ is connected, and hence there is a unique block of positive defect from the first paragraph. Hence the statement is trivially satisfied if $\bG$ is simple. 

So, assume that $\bG$ is not simple. Since we have assumed $\Z(\bG)$ is connected, we have by \cite[Lem.~1.7.7]{GM20} that $\bG^F=[\bG, \bG]^F\bg{T}^F$ for any $F$-stable maximal torus $\bg{T}$ of $\bG$. Then $G$ is a quotient of the direct product of $[\bG, \bG]^F$ with an abelian group $\bg{T}^F$ of order prime to $3$, and hence we may without loss replace $\bG$ with $[\bG, \bG]$, and therefore assume that $\bG$ is semisimple. (Indeed, recall that $\sigma$ will fix any character in $\Irr(\bg{T}^F)$, each of which lie in their own block.)  Then $\bG=\prod_{i=1}^n \bG_i$ for some simple reductive groups $\bG_i$ which commute pairwise. We may assume that $\bG_1,\ldots,\bG_k$ are the components of type $\type{G}_2$ and write $\bg{H}$ for the product of these. Since each $\bG_i$ for $1\leq i\leq k$ has trivial center (as the only simple group of type $\type{G}_2$ is the adjoint group), it follows that $\bg{H}$ is a direct product and $\bG=\bg{H}\times \bg{H}'$ where $\bg{H}'$ is the product of simple components not of type $\type{G}_2$. In particular, by \cite[Cor.~1.5.16]{GM20}, $G$ has a direct factor of the form $\type{G}_2(q)$ with $q$ some power of $3$, and the result follows from the previous paragraph.
\end{proof}

The next observation follows from Theorem \ref{thm:A} and the fact that blocks of general unitary and linear groups behave similarly to certain principal blocks.
\begin{cor}
Let $\wt{G}=\GL_n^\epsilon(q)$ and let $B$ be a $3$-block of $\wt{G}$ with defect group $D$. Then $[D:\Phi(D)]=9$ if  
$|\Irr_{0,\sigma}(B)|\in\{6,9\}$.  
\end{cor}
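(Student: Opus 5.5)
The plan is to reduce the statement for an arbitrary $3$-block $B$ of $\wt G=\GL_n^\epsilon(q)$ to Theorem \ref{thm:A} applied to the principal block of a suitable auxiliary group with the same defect group. First I would dispose of the defining characteristic case. If $3\mid q$, then $\zent{\GL_n}$ is connected and Corollary \ref{cor:defchararbblocks} applies directly. So I would assume $3\nmid q$.

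In the non-defining case I would use the Broué--Michel / Fong--Srinivasan description of $3$-blocks of $\wt G$. Every block $B$ lies in $\mathcal{E}_3(\wt G,s)$ for some semisimple $3'$-element $s\in\wt G^\ast\cong\wt G$. The centralizer $\cent{\wt G^\ast}{s}$ is a product $\prod_i \GL_{n_i}^{\epsilon_i}(q^{d_i})$ of general linear and unitary groups over extension fields. Bonnaf\'e--Rouquier Morita equivalence (Jordan decomposition of blocks, valid here since centralizers are Levi subgroups in type $\type{A}$) then gives a height-preserving bijection between $\Irr(B)$ and $\Irr(b)$. Here $b$ is a unipotent block of $\wt L:=\prod_i\GL_{n_i}^{\epsilon_i}(q^{d_i})$, the bijection is $\chi\mapsto\psi$ with $\chi=\pm R_{\wt L}^{\wt G}(\hat s\psi)$, and the defect groups of $B$ and $b$ coincide.

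The key point is to check that this bijection commutes with $\sigma$. Since $s$ has $3'$-order, the linear character $\hat s$ of $\wt L$ has $3'$-order values, so $\sigma$ fixes it. Lusztig induction also commutes with Galois action, because $R_{\wt L}^{\wt G}$ has rational coefficients. Hence $\chi^\sigma=\pm R_{\wt L}^{\wt G}(\hat s\psi^\sigma)$, and $\chi$ is $\sigma$-fixed if and only if $\psi$ is. This yields $|\Irr_{0,\sigma}(B)|=|\Irr_{0,\sigma}(b)|$, so we may assume $B$ is unipotent in a product of general linear or unitary groups. By Lemma \ref{lem:princblockabove}(v) and the product structure, we may further reduce to a single factor.

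For a unipotent $3$-block $b$ of $\GL_m^\epsilon(Q)$, I would use the combinatorial description via $e$-cores, where $e$ is the order of $\epsilon Q$ mod $3$. The block with $e$-core $\kappa$ and weight $w$ is, via Brou\'e's perfect isometry / Enguehard's results, in height- and Galois-compatible bijection with the principal block of $\GL_{ew}^\epsilon(Q)$. Both sets of characters are parametrized by the same $e$-quotients times $3$-elements of a torus, and $\sigma$ acts identically on the $3$-power parts of the semisimple labels. The defect group is a Sylow $3$-subgroup of $\GL_{ew}^\epsilon(Q)$. Theorem \ref{thm:A} applied to $\GL_{ew}^\epsilon(Q)$ then gives $[D:\Phi(D)]=9$.

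The main obstacle is verifying rigorously that the bijection between the unipotent block $b$ and the principal block of $\GL_{ew}^\epsilon(Q)$ preserves both height zero and $\sigma$-invariance. For this I would describe characters in $\mathcal{E}_3(\cdot,1)$ via $R_{L}^{G}(t\cdot\text{unipotent})$ with $t$ a $3$-element. I would then check that the correspondence $t\mapsto t$ (embedding the $e$-split torus) intertwines $\sigma$, using that $\sigma$ acts on $t$ by $t\mapsto t^{4}$ on both sides.
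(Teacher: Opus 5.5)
Your proposal is correct and follows essentially the same route as the paper. It uses the same chain: Jordan decomposition to a unipotent block of $\cent{\wt G^\ast}{s}$, then a Michler--Olsson-type $\sigma$-compatible correspondence with a principal block, then Theorem~\ref{thm:A}; your direct argument for $\sigma$-equivariance (rationality of Lusztig induction plus $\sigma$-invariance of $\hat s$) is a valid self-contained substitute for the paper's appeal to Srinivasan--Vinroot. One thing to tidy: the ``reduction to a single factor'' is not literally valid, since $|\Irr_{0,\sigma}(b)|\in\{6,9\}$ does not pass to factors (e.g.\ $9=3\cdot 3$), so instead apply Theorem~\ref{thm:A} directly to the product $\prod_i \GL^{\epsilon_i}_{e_iw_i}(q^{d_i})$, whose principal block matches $b$ factor by factor via Lemma~\ref{lem:princblockabove}(v), as the paper does.
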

\begin{proof}
If $q$ is a power of $3$, this is from Corollary \ref{cor:defchararbblocks}.
So now assume that $q$ is a power of some prime $p\neq 3$. Let $B$ be a block in $\mathcal{E}(\wt{G}, s)$ for some semisimple $3'$-element $s\in \wt{G}^\ast$.  Now, by \cite[Thm.~(7A)]{FS82}, Jordan decomposition yields a bijection between $\Irr_0(B)$ and $\Irr_0(b)$, where $b$ is some unipotent block of $C:=\cent{\wt{G}^\ast}{s}$. Here $B$ and $b$ share a defect group $D$. This bijection is further $\sigma$-equivariant by \cite{SV20}. 

Now, let $e$ be the order of $\epsilon q$ modulo $3$, so that $e\in\{1,2\}$. If $e=1$, then in fact the only unipotent block of $C$ is the principal block, so the result now follows from Theorem \ref{thm:A}.

So, suppose $e=2$. We may write $C:=\prod C_i$ as the product of lower-rank general linear or unitary groups $C_i$, depending on the invariant factors of $s$. Say $C_i:=\GL_{m_i}(\eta_i q_i)$ with $q_i$ some power of $q$, $\eta_i\in\{\pm1\}$, and $m_i$ a positive integer. Let $e_i$ be the order of $\eta_i q_i$ modulo $3$. Then the results of \cite{MO83} yield that $D$ is isomorphic to a Sylow $3$-subgroup of $C':=\prod \GL_{e_iw_i}(\eta_i q_i)$ for some positive integer $w_i$ and further there is a bijection between $\Irr_0(b)$ and $\Irr_0(B_0(C'))$. This is further $\sigma$-equivariant, which can be seen from the construction in \cite[P.209-210]{MO83} and using \cite{SV20} to see the action of $\sigma$ on characters in $\mathcal{E}(C_i, t)$ depend only on $s_i$ and $t$, and these will have the same eigenvalues as the corresponding embeddings in $C_i'$.
Hence, the result again follows from Theorem \ref{thm:A}.
\end{proof}

\begin{thebibliography}{MMSFV26}

\bibitem[Alp76]{alperin76}
J.~L. Alperin.
\newblock Isomorphic blocks.
\newblock {\em J. Algebra}, 43(2):694--698, 1976.

\bibitem[BP80]{Bro-Pui80I}
M.~Brou\'e and L.~Puig.
\newblock A {F}robenius theorem for blocks.
\newblock {\em Invent. Math.}, 56(2):117--128, 1980.

\bibitem[Bra68]{Bra68}
R.~Brauer.
\newblock On blocks and sections in finite groups. {II}.
\newblock {\em Amer. J. Math.}, 90:895--925, 1968.

\bibitem[Bra76]{Bra76}
R.~Brauer.
\newblock On finite groups with cyclic {S}ylow subgroups. {I}.
\newblock {\em J. Algebra}, 40(2):556--584, 1976.

\bibitem[CE04]{CE04}
M.~Cabanes and M.~Enguehard.
\newblock {\em Representation theory of finite reductive groups}, volume~1 of
  {\em New Mathematical Monographs}.
\newblock Cambridge University Press, Cambridge, 2004.

\bibitem[Dad77]{dade77}
E.~C. Dade.
\newblock Remarks on isomorphic blocks.
\newblock {\em J. Algebra}, 45(1):254--258, 1977.


\bibitem[FS82]{FS82}
P.~Fong and B.~Srinivasan.
\newblock The blocks of finite general linear and unitary groups.
\newblock {\em Invent. Math.}, 69(1):109--153, 1982.

\bibitem[FG12]{FG12}
J.~Fulman and R.~Guralnick.
\newblock Bounds on the number and sizes of conjugacy classes in finite
  {C}hevalley groups with applications to derangements.
\newblock {\em Trans. Amer. Math. Soc.}, 364(6):3023--3070, 2012.



\bibitem[GAP24]{GAP}
The GAP~Group.
\newblock {\em {GAP -- Groups, Algorithms, and Programming, Version 4.13.0}},
  2024.

\bibitem[Gec03]{Geck03}
M.~Geck.
\newblock Character values, {S}chur indices and character sheaves.
\newblock {\em Represent. Theory}, 7:19--55, February 2003.



\bibitem[GM20]{GM20}
M.~Geck and G.~Malle.
\newblock {\em The character theory of finite groups of {L}ie type}, volume 187
  of {\em Cambridge Studies in Advanced Mathematics}.
\newblock Cambridge University Press, Cambridge, 2020.
\newblock A guided tour.

\bibitem[Gia25]{Gia25}
E.~Giannelli.
\newblock{{M}c{K}ay bijections and character degrees.} \newblock \emph{arXiv:2507.01730,}
\newblock 2025.



\bibitem[GRSS20]{Gia-Riz-Sam-Sch20}
E.~Giannelli, N.~Rizo, B.~Sambale, and A.~A. {Schaeffer Fry}.
\newblock Groups with few {$p'$}-character degrees in the principal block.
\newblock {\em Proc. Amer. Math. Soc.}, 148(11):4597--4614, 2020.

\bibitem[GRSV25]{Gia-Riz-Sch-Val24}
E.~Giannelli, N.~Rizo, A.~A. Schaeffer~Fry, and C.~Vallejo.
\newblock Characters and {S}ylow subgroup abelianization.
\newblock {\em J. Algebra}, 667:824--864, 2025.



\bibitem[GI83]{Glu-Isa83}
D.~Gluck and I.~M. Isaacs.
\newblock Tensor induction of generalized characters and permutation
  characters.
\newblock {\em Illinois J. Math.}, 27(3):514--518, 1983.

\bibitem[Her70]{Her70}
M. Herzog.
\newblock On a problem of E. Artin.
\newblock {\em J. Algebra}, 15:408--416, 1970.

\bibitem[His90]{hiss}
G.~Hiss.
\newblock Regular and semisimple blocks of finite reductive groups.
\newblock {\em J. London Math. Soc. (2)}, 41(1):63--68, 1990.

\bibitem[Hum71]{humphreys}
J.~E.~ Humphreys.
\newblock Defect groups for finite groups of {L}ie type.
\newblock {\em Math. Z.}, 119:149--152, 1971.

\bibitem[HS23]{HSF}
N.~N. Hung and A.~A. Schaeffer~Fry.
\newblock On {H}\'ethelyi-{K}\"ulshammer's conjecture for principal blocks.
\newblock {\em Algebra Number Theory}, 17(6):1127--1151, 2023.



\bibitem[IN02]{Isa-Nav02}
I.~M. Isaacs and G.~Navarro.
\newblock New refinements of the {M}c{K}ay conjecture for arbitrary finite
  groups.
\newblock {\em Ann. of Math. (2)}, 156(1):333--344, 2002.

\bibitem[Ket25]{eden}
E.~Ketchum.
\newblock Characters and the generation of {S}ylow 3-subgroups for almost
  simple groups.
\newblock {\em arXiv:2509.02854}, 2025.

\bibitem[LMNT25]{Lyo-Mar-Nav-Tie25}
R.~Lyons, J.~M. Mart\'inez, G.~Navarro, and P.~H. Tiep.
\newblock Principal blocks, irreducible restriction, fields and degrees.
\newblock \emph{arXiv:2507.22503,}
\newblock 2025.

\bibitem[Mal07]{Malle07}
G.~Malle.
\newblock Height 0 characters of finite groups of {L}ie type.
\newblock {\em Represent. Theory}, 11:192--220, 2007.

\bibitem[Mal08]{Mal08}
G.~Malle.
\newblock Extensions of unipotent characters and the inductive {M}c{K}ay
  condition.
\newblock {\em J. Algebra}, 320(7):2963--2980, 2008.

\bibitem[MNST24]{MNST}
G.~Malle, G.~Navarro, A.~A. Schaeffer~Fry, and P.~H. Tiep.
\newblock Brauer's height zero conjecture.
\newblock {\em Ann. of Math. (2)}, 200(2):557--608, 2024.

\bibitem[MT11]{MT11}
G.~Malle and D.~Testerman.
\newblock {\em Linear Algebraic Groups and Finite Groups of Lie Type}.
\newblock Cambridge Studies in Advanced Mathematics. Cambridge University
  Press, 2011.
  
\bibitem[MMSV26]{Mar-Mar-Sch-Val24}
A.~Mar\'oti, J.~M. Mart\'inez, A.~A. Schaeffer~Fry, and C.~Vallejo.
\newblock On almost {$p$}-rational characters in principal blocks.
\newblock {\em Publ. Mat.}, 70(1):55--78, 2026.



\bibitem[MO83]{MO83}
G.~O. Michler and J.~B. Olsson.
\newblock Character correspondences in finite general linear, unitary and
  symmetric groups.
\newblock {\em Math. Z.}, 184(2):203--233, 1983.

\bibitem[MS23]{Mor-Sam23}
A.~Moret\'{o} and B.~Sambale.
\newblock Groups with 2-generated {S}ylow subgroups and their character tables.
\newblock {\em Pacific J. Math.}, 323(2):337--358, 2023.



\bibitem[Mur98]{Mur98}
M.~Murai.
\newblock Blocks of factor groups and heights of characters.
\newblock {\em Osaka J. Math.}, 35(4):835--854, 1998.

\bibitem[Nav98]{Nav98}
G.~Navarro.
\newblock {\em Characters and blocks of finite groups}, volume 250 of {\em
  London Mathematical Society Lecture Note Series}.
\newblock Cambridge University Press, Cambridge, 1998.

\bibitem[Nav04]{Nav04}
G.~Navarro.
\newblock The {M}c{K}ay conjecture and {G}alois automorphisms.
\newblock {\em Ann. of Math. (2)}, 160(3):1129--1140, 2004.

\bibitem[Nav18]{Nav18}
G.~Navarro.
\newblock {\em Character theory and the {M}c{K}ay conjecture}, volume 175 of
  {\em Cambridge Studies in Advanced Mathematics}.
\newblock Cambridge University Press, Cambridge, 2018.

\bibitem[NRSV21]{Nav-Riz-Sch-Val20}
G.~Navarro, N.~Rizo, A.~A. {Schaeffer Fry}, and C.~Vallejo.
\newblock Characters and generation of {S}ylow 2-subgroups.
\newblock {\em Represent. Theory}, 25:142--165, 2021.

\bibitem[NT13]{Nav-Tiep16}
G.~Navarro and P.~H. Tiep.
\newblock Characters of relative {$p'$}-degree over normal subgroups.
\newblock {\em Ann. of Math. (2)}, 178(3):1135--1171, 2013.

\bibitem[NT16]{Nav-Tie16}
G.~Navarro and P.~H. Tiep.
\newblock Real groups and {S}ylow 2-subgroups.
\newblock {\em Adv. Math.}, 299:331--360, 2016.

\bibitem[NT19]{Nav-Tie19}
G.~Navarro and P.~H. Tiep.
\newblock Sylow subgroups, exponents, and character values.
\newblock {\em Trans. Amer. Math. Soc.}, 372(6):4263--4291, 2019.

\bibitem[NT21]{Nav-Tie21}
G.~Navarro and P.~H. Tiep.
\newblock The fields of values of characters of degree not divisible by {$p$}.
\newblock {\em Forum Math. Pi}, 9:Paper No. e2, 28, 2021.

\bibitem[Riz18]{Riz18}
N.~Rizo.
\newblock {$p$}-blocks relative to a character of a normal subgroup.
\newblock {\em J. Algebra}, 514:254--272, 2018.

\bibitem[RSV20]{Riz-Sch-Val20}
N.~Rizo, A.~A. {Schaeffer Fry}, and C.~Vallejo.
\newblock Galois action on the principal block and cyclic {S}ylow subgroups.
\newblock {\em Algebra Number Theory}, 14(7):1953--1979, 2020.

\bibitem[RS25]{RSF25}
L.~Ruhstorfer and A.~A. Schaeffer~Fry.
\newblock The {I}saacs--{N}avarro {G}alois conjecture.
\newblock {\em arXiv:2509.02300}, 2025.



\bibitem[ST23]{SFT23}
A.~A. Schaeffer~Fry and J.~Taylor.
\newblock Galois automorphisms and classical groups.
\newblock {\em Transform. Groups}, 28(1):439--486, 2023.

\bibitem[SV20]{SV20}
B.~Srinivasan and C.~R. Vinroot.
\newblock Galois group action and {J}ordan decomposition of characters of
  finite reductive groups with connected center.
\newblock {\em J. Algebra}, 558:708--727, 2020.

\bibitem[TZ04]{TZ04}
P.~H. Tiep and A.~E. Zalesskii.
\newblock Unipotent elements of finite groups of {L}ie type and realization
  fields of their complex representations.
\newblock {\em J. Algebra}, 271:327--390, 2004.

\bibitem[Va23]{Va23}
C. Vallejo~Rodr\'iguez.
\newblock  A lower bound on the number of generators of a defect group. 
\newblock {\em Vietnam J. Math.}, 51(3):571--576, 2023.

\bibitem[VLVL85]{Ver-Ver85}
A.~Vera~L\'{o}pez and J.~Vera~L\'{o}pez.
\newblock Classification of finite groups according to the number of conjugacy
  classes.
\newblock {\em Israel J. Math.}, 51(4):305--338, 1985.

\end{thebibliography}

\end{document}